\documentclass[11pt]{article}
\usepackage{pifont}
\usepackage{amsfonts}
\usepackage{amscd}
\usepackage{amsmath,amstext,amsthm,amsbsy,amssymb}
\usepackage{mathrsfs}

\setlength{\oddsidemargin}{-0.32in} \setlength{\textwidth}{6.88in}
\setlength{\textheight}{8.88 in} \setlength{\topmargin}{-.5in}

\newcommand{\bc}{{\mathbb C}}
\newcommand{\br}{{\mathbb R}}
\newcommand{\bh}{{\mathbb H}}
\newcommand{\ba}{{\mathbb A}}
\newcommand{\bs}{{\mathbb S}}

\newcommand{\bm}{{\mathbb M}}
\newcommand{\F}{{\mathbb F}}
\newcommand{\bp}{{\mathbb P}}
\newcommand{\fp}{{\mathfrak{p}}}
\newcommand{\fq}{{\mathfrak{q}}}
\newcommand{\p}{{\bf p}}
\newcommand{\n}{{\bf n}}
\newcommand{\q}{{\bf q}}
 \newcommand{\X}{{\mathbb X}}
\newcommand{\mtx}[4]

\newtheorem{thm}{Theorem}[section]
\newtheorem{lem}{Lemma}[section]
\newtheorem{prop}{Proposition}[section]

\newtheorem{dfn}{Definition}[section]

\begin{document}

\title{ Congruence classes of points in  quaternionic  hyperbolic space}
\author{Wensheng Cao  \\
School of Mathematics and Computational Science,\\
 Wuyi University,
Jiangmen, \\
Guangdong 529020, P.R. China\\
e-mail: {\tt wenscao@aliyun.com}\\
 }
\date{}
\maketitle

\bigskip
{\bf Abstract}\ \  An important problem in  quaternionic  hyperbolic geometry is to classify ordered $m$-tuples of pairwise distinct points in the closure of quaternionic hyperbolic n-space,  $\overline{{\bf H}_\bh^n}$,
 up to congruence in the holomorphic isometry group ${\rm PSp}(n,1)$ of  ${\bf H}_\bh^n$. In this paper we concentrate  on two cases:  $m=3$ in $\overline{{\bf H}_\bh^n}$ and $m=4$ on $\partial{\bf H}_\bh^n$ for $n\geq 2$.
  New  geometric  invariants  and several distance formulas in  quaternionic hyperbolic geometry  are introduced and studied for this problem.
 The  congruence classes are completely described  by  quaternionic Cartan's angular  invariants and the distances between some geometric objects for the first case.  The moduli space  is constructed for the second case.
\bigskip

{\bf Mathematics Subject Classifications (2000)}\ \  57M50, 53C17, 32M15.
\medskip

{\bf Keywords}\ \  Quaternionic cross-ratio;  Quaternionic Cartan's angular  invariant; Gram matrix;  Congruence class; Moduli space.

\section{Introduction}

    Let $\F$ denote the real numbers $\br$, the complex numbers $\bc$ or the quaternions $\bh$.
Let ${\bf H}_\F^n$ denote the $n$-dimensional hyperbolic space over $\F$ with the boundary  $\partial{\bf H}_\F^n$ .  Let $\langle ,\rangle$ be  the Hermitian product in $\F^{n+1}$ of signature $(n,1)$. For $\F=\bc$ and $\bh$,
 the linear groups which act as the isometries in ${\bf H}_\F^n$, are  denoted by ${\rm PU}(n,1)$ and ${\rm PSp}(n,1)$ respectively.
An important problem in   hyperbolic geometry is to classify ordered $m$-tuples of pairwise distinct points  in the closure of hyperbolic n-space  $\overline{{\bf H}_\F^n}$
 up to congruence in the holomorphic isometry group of ${\bf H}_\F^n$.   This
problem is trivial for $m = 1, 2$. To deal with the cases of  $m\geq 3$,  one need to develop  some geometric invariants or geometric tools.

   The  cross-ratio  of a quadruple of  points in $\partial{\bf H}_\br^{n}$  was  defined by Cao and Waterman  \cite{caowat98},  which  coincides with the  classical cross-ratio of the  complex plane when $n=3$  and  the quaternionic cross-ratio of a quadruple of  quaternions \cite{bisgen09} when $n=5$. These classical cross ratios   are  useful tools in  real hyperbolic geometry.

Let $\fp=(p_1,p_2,p_3,p_4)$ be an ordered quadruple of pairwise distinct points in $\partial{\bf H}_\bc^{n}$.  The classical cross-ratio was generalized to $\partial{\bf H}_\bc^{n}$  by Kor\'anyi and Reimann \cite{korrei87} as the following complex number:
\begin{equation}\label{complexcross}
\X(\fp)=\X(p_1,p_2,p_3,p_4)=\frac{\langle \p_3,\p_1 \rangle \langle \p_4, \p_2 \rangle}{\langle \p_4,\p_1 \rangle \langle \p_3, \p_2 \rangle},
\end{equation}
where  $\p_i\in \bc^{n,1}$ are   null lifts of $p_i$.
  This complex cross-ratio is  closely related to  Cartan's angular  invariant  and other geometric invariants.  The Cartan's angular  invariant \cite{car32,gol99} is an angle associated to a triple $\fp=(p_1,p_2,p_3)$ of points in  $\partial{\bf H}_\bc^n$.  Such an angle  $\ba(\fp)$  is defined to be the following argument:
    \begin{equation}\label{complexangule}
\ba(\fp)=\arg(-\langle \p_1,\p_2,\p_3 \rangle)\in [-\pi/2, \pi/2],
\end{equation}
where $\p_i$ are the null lifts of $p_i$, and
$$\langle \p_1,\p_2,\p_3 \rangle= \langle \p_1,\p_2 \rangle \langle \p_2,\p_3 \rangle \langle \p_3,\p_1 \rangle.$$
   It is a shape invariant, originally  used in detecting whether the corresponding  triple lies on a chain or on an $\br$-circle.

The  Cartan's angular invariant $\ba(\fp)$ and some distance formulas between some geometric objects are the exact geometric invariant and tools  to  study  congruence classes of triples  $\fp=(p_1,p_2,p_3)\in (\overline{{\bf H}_{\bc}^n})^3$.  The  Cartan's angular invariant $\ba(\fp)$  of a tripe  $\fp=(p_1,p_2,p_3)\in (\partial{\bf H}_\bc^n)^3$  determines  its congruence class in ${\rm PU}(n,1)$ \cite{car32,gol99}.
 The moduli space of such  triples can be described as the interval $[-\pi/2,\pi/2]$.  If  such a triple $\fp=(p_1,p_2,p_3)$  is in  ${\bf H}_{\bc}^n$, then its congruence class in ${\rm PU}(n,1)$  is described by the three distance $\rho(p_i,p_j)$ and Brehm's shape invariant \cite{bre90}. Here $\rho(,)$ is the Bergman metric on ${\bf H}_{\bc}^n$. The distance formula from  a  point in   ${\bf H}_{\bc}^n$ to a complex geodesic is needed for the general congruence class problem in $\overline{{\bf H}_\bc^n}$.

For $n = 2$ and $m = 4$  this problem was considered by Falbel, Parker and Platis \cite{falpla08,fal09,park18,park19}. The main tool  is the complex cross-ratio variety determined by three complex cros-ratios.  The moduli space of ordered quadruples of pairwise distinct points in $\partial{\bf H}_\bc^n$ was described by Cunha and Gusevskii \cite{cungus10} with the tool of Gram matrix.   A Gram matrix associated to  $\fp=(p_1,p_2,p_3,p_4)$  in $\partial {\bf H}_{\bh}^n$ with lift  ${\bf p}=({\bf p}_{1},{\bf p}_{2},{\bf p}_{3}, {\bf p}_{4})$ is the Hermitian  matrix
$$G=G({\bf p})=(g_{ij})=( \langle {\bf p}_{i},{\bf p}_{j}\rangle).$$
Gram matrix  is an important tool in  complex hyperbolic geometry \cite{cungus10,gol99,gro06}   because  its entries $\langle {\bf p}_{i},{\bf p}_{j}\rangle$   are  base material for other geometric invariants. We can read off it almost  all the geometric information concerning the relative geometric positions of $p_i$.

  This technique is also used to  construct the invariants which describe
uniquely the ${\rm PU}(n,1)$-congruence class of an ordered $m$-tuple of pairwise distinct points in
$\partial{\bf H}_\bc^n$  and describe the corresponding moduli
space for any $n \geq 1$ and $m \geq 4$ \cite{cungus12}.

The main aim of this paper is concerned with the important problem mention above in quaternionic hyperbolic geometry. We  concentrate ourself on two cases:  $m=3$ in $\overline{{\bf H}_\bh^n}$ and $m=4$ on $\partial{\bf H}_\bh^n$ for $n\geq 2$. We will consider  the description of  congruence classes for the first case.  We also will  obtain a construction of   the moduli space of ordered quadruples of pairwise distinct points in  $\partial{\bf H}_\bh^n$.

 For this purpose  we  need to  develop some  geometric invariants and  tools in  quaternionic hyperbolic geometry.
We will extend the quaternionic Cartan's angular  invariant and quaternionic cross-ratio to $\overline{{\bf H}_\bh^n}$.  Several distance  formulas between some geometric objects are  obtained.  We will introduce the Gram matrix in quaternionic  hyperbolic geometry.  We remark that some generalizations are new even  in complex hyperbolic geometry and the introduced  geometric invariants  deserve further research  in  quaternionic hyperbolic geometry.

  We need several notations and definitions to state our main results.  Such notations and  definitions rely on  the following two propositions, which will be proved in Section \ref{sect-pre} below.
\begin{prop}\label{prop-1.1} If ${\bf z},{\bf w}\in \bh^{n,1}-\{0\}$  with $\langle {\bf z},\,{\bf z} \rangle\leq 0 $ and  $\langle{\bf w},\,{\bf w}\rangle\leq 0 $ then either ${\bf w}={\bf z}\lambda$ for some $\lambda\in \bh$ or  $\langle{\bf z},\,{\bf w}\rangle\neq 0$.
\end{prop}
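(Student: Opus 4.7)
The plan is to prove the contrapositive: assume $\langle {\bf z},{\bf w}\rangle = 0$ together with the hypothesis that ${\bf w}$ is \emph{not} a right scalar multiple of ${\bf z}$, and derive a contradiction with the signature assumptions $\langle {\bf z},{\bf z}\rangle\leq 0$ and $\langle {\bf w},{\bf w}\rangle \leq 0$. The underlying idea is to exploit that the form on $\bh^{n,1}$ has signature $(n,1)$ to decompose the ambient space orthogonally so that any vector orthogonal to ${\bf z}$ is essentially forced into the positive-definite complement, where the condition $\langle {\bf w},{\bf w}\rangle\leq 0$ becomes untenable.

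I would split the argument by whether ${\bf z}$ is timelike or null. If $\langle {\bf z},{\bf z}\rangle<0$, then ${\bf z}\bh$ is a negative-definite quaternionic line and $\bh^{n,1} = {\bf z}\bh \oplus ({\bf z}\bh)^\perp$ with the complement positive definite of quaternionic dimension $n$. Writing ${\bf w}={\bf z}a+{\bf w}'$ with ${\bf w}'\in ({\bf z}\bh)^\perp$, the relation $\langle {\bf z},{\bf w}\rangle = \langle {\bf z},{\bf z}\rangle\, a = 0$ forces $a=0$; then ${\bf w}={\bf w}'$ lies in the positive-definite part, and since ${\bf w}\neq 0$ we get $\langle {\bf w},{\bf w}\rangle>0$, contradicting the hypothesis.

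If instead $\langle {\bf z},{\bf z}\rangle=0$, the line ${\bf z}\bh$ is contained in its own orthogonal complement and the previous decomposition fails. I would invoke non-degeneracy of the form to produce a null partner ${\bf z}'$ with $\langle {\bf z},{\bf z}'\rangle = 1$, so that $H := {\bf z}\bh \oplus {\bf z}'\bh$ is a hyperbolic plane of signature $(1,1)$ with Gram matrix $\left(\begin{smallmatrix}0&1\\1&0\end{smallmatrix}\right)$, and $H^\perp$ is positive definite of dimension $n-1$. Decomposing ${\bf w}={\bf z}a+{\bf z}'b+{\bf w}''$ with ${\bf w}''\in H^\perp$, the orthogonality $\langle {\bf z},{\bf w}\rangle = b = 0$ gives ${\bf w}={\bf z}a+{\bf w}''$. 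A direct computation using $\langle {\bf z},{\bf z}\rangle = 0$ and $\langle {\bf z},{\bf w}''\rangle = 0$ yields $\langle {\bf w},{\bf w}\rangle = \langle {\bf w}'',{\bf w}''\rangle \geq 0$; combined with $\langle {\bf w},{\bf w}\rangle\leq 0$ and the positive definiteness of $H^\perp$ this forces ${\bf w}''=0$, so ${\bf w}={\bf z}a$, contradicting the assumption that ${\bf w}$ is not a scalar multiple of ${\bf z}$.

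The main technical obstacle is the null case: producing the partner ${\bf z}'$ and verifying that $H^\perp$ is positive definite of quaternionic dimension $n-1$ both rely on the full strength of the signature $(n,1)$ hypothesis, and one must be scrupulous with the right-$\bh$-scalar convention when expanding $\langle {\bf z}a + {\bf w}'', {\bf z}a + {\bf w}''\rangle$, since quaternionic non-commutativity makes it easy to misplace the scalars $a, \bar a$. Once the correct Hermitian convention is fixed, however, the cross terms vanish by construction and the computations are routine.
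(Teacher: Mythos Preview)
Your argument is correct, but it follows a different route from the paper's. The paper exploits the transitivity of ${\rm Sp}(n,1)$ on quaternionic lines in $V_-$ and its double transitivity on lines in $V_0$ to normalise ${\bf z}$ to one of two explicit coordinate vectors, namely ${\bf z}=(0,\ldots,0,1)^T$ when ${\bf z}\in V_0$ and ${\bf z}=(-1,0,\ldots,0,1)^T$ when ${\bf z}\in V_-$, and then reads off the conclusion by a direct computation in coordinates. Your proof is coordinate-free: in the timelike case you use that $({\bf z}\bh)^\perp$ is positive definite, and in the null case you build a hyperbolic plane containing ${\bf z}$ whose orthogonal complement is positive definite. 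The paper's version is shorter to write and dovetails with the group-theoretic machinery already set up there; your version is more self-contained, does not rely on the transitivity facts, and would transfer verbatim to any non-degenerate Hermitian form of signature $(n,1)$. One small point: with the paper's convention $\langle{\bf u},{\bf v}\rangle={\bf v}^*J{\bf u}$, the identity in your timelike step is $\langle{\bf z},{\bf z}a\rangle=\bar a\,\langle{\bf z},{\bf z}\rangle$ rather than $\langle{\bf z},{\bf z}\rangle\, a$, but since $\langle{\bf z},{\bf z}\rangle$ is real this does not affect the conclusion.
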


\begin{prop}\label{prop-1.2}
Let   $\fp=(p_1,p_2,p_3)$  be any triple of  pairwise distinct points in $\overline{{\bf H}_{\bh}^n}$ and  $\p_1,\p_2,\p_3$ be  arbitrary lifts of $p_1, p_2, p_3$  respectively, then the number
$$ \langle \p_1, \p_2, \p_3\rangle =\langle \p_2, \p_1 \rangle \langle \p_3, \p_2\rangle \langle \p_1, \p_3 \rangle\in \bh,
$$
and
 \begin{equation}\Re(\langle \p_1, \p_2, \p_3\rangle )\leq  0 .\end{equation}
\end{prop}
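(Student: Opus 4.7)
The assertion $\langle\p_1,\p_2,\p_3\rangle\in\bh$ is automatic. For the inequality $\Re\langle\p_1,\p_2,\p_3\rangle\leq 0$, my plan combines a rescaling invariance observation with a reduction to standard coordinates. Under $\p_i\mapsto\p_i\lambda_i$ with $\lambda_i\in\bh\setminus\{0\}$, using the sesquilinearity of $\langle\cdot,\cdot\rangle$ the triple product telescopes as
\[
(\bar\lambda_1 \langle\p_2,\p_1\rangle\lambda_2)(\bar\lambda_2 \langle\p_3,\p_2\rangle\lambda_3)(\bar\lambda_3 \langle\p_1,\p_3\rangle\lambda_1) = |\lambda_2|^2|\lambda_3|^2\,\bar\lambda_1\langle\p_1,\p_2,\p_3\rangle\lambda_1,
\]
so together with the identity $\Re(\bar\lambda q\lambda)=|\lambda|^2\Re(q)$ its real part is multiplied by the positive factor $|\lambda_1|^2|\lambda_2|^2|\lambda_3|^2$, and hence its sign depends only on the three points. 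A parallel bookkeeping using $\overline{\langle \p_i,\p_j\rangle}=\langle\p_j,\p_i\rangle$ shows the sign is also preserved under permutations of $(p_1,p_2,p_3)$, and ${\rm Sp}(n,1)$ acts by isometries of $\langle\cdot,\cdot\rangle$. Consequently I may both apply an isometry and rescale lifts freely.

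I then split into two cases. If any $p_i$ lies in $\partial{\bf H}_\bh^n$, I permute so that it is $p_1$, put the Hermitian form in Siegel shape $\langle z,w\rangle=\bar w_1 z_{n+1}+\bar w_{n+1}z_1+\sum_{k=2}^n\bar w_k z_k$, and use ${\rm Sp}(n,1)$ to arrange $\p_1=e_1$. Proposition~\ref{prop-1.1} combined with distinctness gives $\langle\p_k,\p_1\rangle\neq 0$, which reads $(\p_k)_{n+1}\neq 0$; rescaling yields $\p_2=(w_1,\dots,w_n,1)$ and $\p_3=(x_1,\dots,x_n,1)$, for which $\langle\p_k,\p_k\rangle\leq 0$ translates to $2\Re(w_1)+\sum_{k\geq 2}|w_k|^2\leq 0$ and the analogue for $x$. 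Direct expansion gives $\langle\p_1,\p_2,\p_3\rangle=\bar w_1+x_1+\sum_{k\geq 2}\bar w_k x_k$, and summing the two quadratic constraints with the elementary bound $\Re(\bar w_k x_k)\leq\tfrac12(|w_k|^2+|x_k|^2)$ delivers $\Re\leq 0$. If all three points are interior, I instead use the ball form $\langle z,w\rangle=\sum_{k=1}^n\bar w_k z_k-\bar w_{n+1}z_{n+1}$ with $\p_1=e_{n+1}$ and normalize $\p_2=(v,1)$, $\p_3=(y,1)$ with $|v|,|y|\leq 1$; the product collapses to $\langle\p_1,\p_2,\p_3\rangle=\sum\bar v_k y_k-1$, and Cauchy--Schwarz closes the estimate.

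The main obstacle is that the complex-style proof via $\det G\leq 0$ for the Gram matrix does not transpose directly: the Moore determinant of the quaternionic Hermitian matrix $G=(\langle\p_i,\p_j\rangle)$ involves the term $\Re(\langle\p_1,\p_2\rangle\langle\p_2,\p_3\rangle\langle\p_3,\p_1\rangle)$, which over $\bh$ generally lies in a different cyclic class of triple products than the target $\Re(\langle\p_2,\p_1\rangle\langle\p_3,\p_2\rangle\langle\p_1,\p_3\rangle)$, so signature considerations alone cannot pin down the sign of the Cartan invariant. The explicit normalize-and-compute approach above sidesteps this noncommutativity subtlety entirely.
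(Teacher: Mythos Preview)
Your proof is correct, and the overall architecture---invariance of the sign under rescaling, permutation, and the ${\rm Sp}(n,1)$-action, followed by a split into ``at least one boundary point'' versus ``all three interior''---matches the paper's proof. Case~(1) is essentially identical: the paper takes $p_1=\infty$ in the Siegel model and reaches the same expression $\bar w_1+x_1+\sum_{k\geq 2}\bar w_k x_k$ (up to labeling), then bounds it by the same AM--GM/Cauchy--Schwarz step.

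The genuine difference is Case~(2). The paper remains in the Siegel model with $p_1=(-1,0,\dots,0)^T$, and the triple product then carries factors $(u_1-1)$ and $(\bar r_1-1)$; establishing $\Re\leq 0$ requires a page of algebraic manipulation culminating in the identity $-|r_1-u_1|^2-\bigl(|\bar r_1-1|\sqrt{-\Re u_1}-|u_1-1|\sqrt{-\Re r_1}\bigr)^2$. Your switch to the ball model with $\p_1=e_{n+1}$ collapses the whole computation to $\langle\p_1,\p_2,\p_3\rangle=\langle v,y\rangle-1$ and a single Cauchy--Schwarz application, which is considerably cleaner. The trade-off is only that the paper works in a single fixed model throughout, while you invoke (implicitly) the linear equivalence of signature-$(n,1)$ forms to change coordinates; this is of course harmless. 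Your closing remark about the Moore determinant is also apt: the cyclic class of $\Re(abc)$ over $\bh$ does not in general contain $\Re(acb)$, so the complex Gram-matrix shortcut is unavailable.
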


By  Propositions \ref{prop-1.1},\ref{prop-1.2}, we can rephrase  the definition of quaternionic Cartan's angular  invariant by  Apanasov and Kim \cite{apakim07} as follows.
  \begin{dfn} \label{car32 angularh} The {\it quaternionic Cartan's angular  invariant} of a triple  $\fp=(p_1,p_2,p_3)$   of pairwise distinct points in $\overline{{\bf H}_{\bh}^n}$ is the  angular invariant $\ba_{\bh}(\fp)$,  $0 \leq \ba_{\bh}(\fp)\leq \frac{\pi}{2}$,    given by
\begin{equation} \label{angular} \ba_{\bh}(\fp)=\ba_{\bh}(p_1,p_2,p_3):=\arccos \frac{\Re(-\langle \p_1, \p_2, \p_3\rangle)}{|\langle \p_1, \p_2, \p_3\rangle|},\end{equation}
where $\p_1,\p_2,\p_3$ are lifts of $p_1, p_2, p_3$, respectively.
\end{dfn}

  \begin{dfn}
 For $u,v\in \overline{{\bf H}_{\bh}^n}$ with  lifts ${\bf u}$ and ${\bf v}$, respectively,   we define the  {\it quaternionic line} spanned by $u,v$ as the set
 \begin{equation}
  L_{uv}=\bp(\{ {\bf w}:{\bf w}={\bf u}\lambda+{\bf v}\mu,  \lambda, \mu\in \bh\})\cap \overline{
{\bf H}_{\bh}^n}.
\end{equation}
\end{dfn}
Let $\rho(,)$ be the Bergman metric on ${\bf H}_{\bh}^n$  and  $\rho(L_{uv},z)$  the  hyperbolic  distance  from $z\in  {\bf H}_{\bh}^n$ to  $L_{uv}$.

\medskip
One of our main results is the following theorem.
\begin{thm}\label{thm-triple}  Let  $\fp=(p_1, p_2, p_3)$ and  $\fq=(q_1, q_2, q_3)$ be
triples of  pairwise distinct points in   $\overline{{\bf H}_{\bh}^n}$.   Then there exists an isometry
$h\in  {\rm Sp}(n,1)$ such that $h(p_1)=q_1, h(p_2)=q_2,h(p_3)=q_3$  if and only if one of the following conditions holds:
\begin{itemize}
  \item[(i)] $\fp,\fq \in \partial {\bf H}_{\bh}^n\times \partial {\bf H}_{\bh}^n\times \partial {\bf H}_{\bh}^n$ and $\ba_{\bh}(\fp)= \ba_{\bh}(\fq)$.
  \item[(ii)]  $\fp,\fq \in \partial {\bf H}_{\bh}^n\times \partial {\bf H}_{\bh}^n\times {\bf H}_{\bh}^n$, $\ba_{\bh}(\fp)= \ba_{\bh}(\fq)$ and $\rho(L_{p_1p_2},p_3)=\rho(L_{q_1q_2},q_3)$.
  \item[(iii)]  $\fp,\fq \in \partial {\bf H}_{\bh}^n\times {\bf H}_{\bh}^n\times {\bf H}_{\bh}^n$, $\rho(L_{p_1p_2},p_3)=\rho(L_{q_1q_2},q_3)$, $\rho(L_{p_1p_3},p_2)=\rho(L_{q_1q_3},q_2)$ and  $\rho(p_2,p_3)=\rho(q_2,q_3).$
  \item[(iv)]  $\fp,\fq \in {\bf H}_{\bh}^n\times {\bf H}_{\bh}^n\times {\bf H}_{\bh}^n$, $\ba_{\bh}(\fp)= \ba_{\bh}(\fq)$, $\rho(p_1,p_2)=\rho(q_1,q_2)$,  $\rho(p_1,p_3)=\rho(q_1,q_3)$ and $\rho(p_2,p_3)=\rho(q_2,q_3).$
\end{itemize}
\end{thm}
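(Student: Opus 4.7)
The strategy is to reduce each of (i)--(iv) to a Gram--matrix calculation, via the observation that two ordered triples $(p_1,p_2,p_3)$ and $(q_1,q_2,q_3)$ in $\overline{{\bf H}_\bh^n}$ are $\mathrm{Sp}(n,1)$-congruent if and only if there exist lifts with $\langle \p_i,\p_j\rangle = \langle \q_i,\q_j\rangle$ for all $i,j$. One direction is immediate from the definition of $\mathrm{Sp}(n,1)$; the converse uses a Witt-type extension for the quaternionic Hermitian form of signature $(n,1)$: the $\bh$-linear map $\p_i\mapsto\q_i$ preserves the restricted form on the quaternionic span (which has signature at most $(2,1)$ and therefore embeds isometrically into $\bh^{n,1}$), and hence extends to an element of $\mathrm{Sp}(n,1)$.

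The forward implications in (i)--(iv) are routine: $\rho(p_i,p_j)$ and $\rho(L_{uv},z)$ are metric quantities, and $\ba_\bh(\fp)$ is invariant because $\langle h\p_1,h\p_2,h\p_3\rangle=\langle\p_1,\p_2,\p_3\rangle$ for $h\in\mathrm{Sp}(n,1)$, while rescaling the lifts $\p_i\mapsto\p_i\lambda_i$ only conjugates $\langle\p_1,\p_2,\p_3\rangle$ by a quaternion and scales it by a positive real, leaving $\Re(-\langle\p_1,\p_2,\p_3\rangle)/|\langle\p_1,\p_2,\p_3\rangle|$ unchanged. For the converse I use the normalisation scheme: a null lift for a boundary point (unique up to right multiplication by $\bh^{*}$), and a lift with $\langle\p,\p\rangle=-1$ for an interior point (unique up to right multiplication by a unit quaternion). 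Proposition~\ref{prop-1.1} guarantees every off-diagonal Gram entry is nonzero. The residual gauge acts on the entries by $g_{ij}\mapsto\bar\lambda_j g_{ij}\lambda_i$, and the tactic is to rescale sequentially so as to put as many entries as possible into a prescribed positive real form; what remains is a single $\mathrm{Sp}(1)$-conjugation acting on one residual off-diagonal entry, whose orbit is parametrised exactly by its modulus and its real part. The listed invariants must then be shown to supply precisely these two real parameters.

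Concretely: in case (i), after setting $\langle\p_2,\p_1\rangle=\langle\p_3,\p_1\rangle=-\tfrac12$, one has $\langle\p_1,\p_2,\p_3\rangle=\tfrac14\langle\p_3,\p_2\rangle$; the nullity of the three lifts allows a further common rescaling that both scales $|\langle\p_3,\p_2\rangle|$ to any prescribed value and conjugates it by an arbitrary unit quaternion, so that only the conjugation orbit remains and this is captured by $\ba_\bh(\fp)$. In case (iv) one normalises $\langle\p_i,\p_i\rangle=-1$ and reads off the three moduli $|g_{ij}|=\cosh(\rho(p_i,p_j)/2)$ from the distances; rescaling $\p_2,\p_3$ by unit quaternions then puts $\langle\p_2,\p_1\rangle$ and $\langle\p_3,\p_1\rangle$ into positive real form, leaving a diagonal $\mathrm{Sp}(1)$-action that conjugates $\langle\p_3,\p_2\rangle$, whose modulus is already fixed and whose real part is supplied by $\ba_\bh(\fp)$. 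Cases (ii) and (iii) are hybrids: the distance $\rho(L_{uv},z)$ enters through a Brehm-type formula of the shape $\cosh^2\rho(L_{uv},z)=F(\langle\cdot,\cdot\rangle)$ to be established in Section~\ref{sect-pre}, which encodes the modulus of a triple product such as $\langle\p_1,\p_2,\p_3\rangle$ in terms of the off-diagonal Gram entries and supplies exactly the modulus or conjugation information that the mixed interior/boundary setup does not fix by direct real-normalisation.

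The principal obstacle is the non-commutativity of $\bh$. In the complex setting each normalisation step kills a $\mathrm{U}(1)$ of residual gauge and determines a full complex off-diagonal entry; over $\bh$, the analogous step leaves a non-abelian $\mathrm{Sp}(1)$ acting by conjugation, so the remaining entry is only pinned down up to its $\mathrm{Sp}(1)$-conjugacy class, i.e.\ up to its modulus and real part. One must therefore verify in each case that the invariants listed in the theorem supply exactly this pair of real parameters, and no less. Case (iii) is the most delicate, since only one point lies on the boundary and a simultaneous real-normalisation of all three pairs is unavailable; here one must carefully combine the two distance-to-line invariants with $\rho(p_2,p_3)$ to recover precisely the modulus and conjugation data of the residual off-diagonal entry.
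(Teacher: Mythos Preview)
Your Gram-matrix/Witt-extension strategy is correct and would give a complete proof, but it is a genuinely different route from the paper's. The paper does not argue abstractly with Gram matrices for triples; instead it works entirely in coordinates in the Siegel model. For each case it uses the transitivity of ${\rm Sp}(n,1)$ to send the first two points of both $\fp$ and $\fq$ to explicit points (either $o,\infty$ when both are on the boundary, or specified points on the geodesic $\gamma_{o\infty}$ when one or both are interior), so that the problem becomes: find an element of the stabiliser $G_{o,\infty}=\{{\rm diag}(\mu,A,\bar\mu^{-1}):A\in{\rm Sp}(n-1)\}$ carrying the normalised $p_3$ to the normalised $q_3$. The key device is Lemma~\ref{lem-map}, which says such an element exists if and only if, writing $p_3=(r_1,\dots,r_n)$ and $q_3=(z_1,\dots,z_n)$, there is $\kappa>0$ with $\Re(z_1)=\kappa^2\Re(r_1)$, $|z_1|=\kappa^2|r_1|$ and $\sum_{i\ge 2}|z_i|^2=\kappa^2\sum_{i\ge 2}|r_i|^2$. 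Each case then reduces to elementary (if somewhat intricate, especially in (iii) and (iv)) algebra verifying that the listed invariants force these three equalities, using the explicit distance formulae of Propositions~\ref{d-geo} and~\ref{lem-dist}.

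Your approach makes the parameter count conceptually transparent (one residual off-diagonal quaternion modulo ${\rm Sp}(1)$-conjugacy, hence two real numbers beyond what the normalisation fixes) and would extend more cleanly to longer tuples. The paper's coordinate approach, on the other hand, yields the counterexamples in Remarks~4.1--4.3 for free: e.g.\ Remark~4.2 exhibits an explicit pair of triples in case~(iii) for which $\ba_\bh$, $\rho(L_{p_1p_2},p_3)$ and $\rho(p_2,p_3)$ all match yet Lemma~\ref{lem-map} visibly fails, showing that the angular invariant cannot replace $\rho(L_{p_1p_3},p_2)$ there. One small correction to your sketch: in case~(ii), the formula $\cosh^2\bigl(\rho(L_{uv},z)/2\bigr)=2\Re\bigl(\eta(u,v,z)\bigr)$ of Proposition~\ref{d-geo} delivers the \emph{real part} of the residual Gram entry after your normalisation, not its modulus; the modulus is then recovered by combining this with $\ba_\bh$, which gives the ratio $\Re(\cdot)/|\cdot|$.
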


For two quaternions  $a=a_0+a_1{\bf i}+a_2{\bf j}+a_3{\bf k}$ and $b=b_0+b_1{\bf i}+b_2{\bf j}+b_3{\bf k}$,  where  $a_i,b_i\in\br$. We define  the following two functions:
$$\nu:\bh\to \bh, \ \sigma:\bh\times \bh\to \bc$$ as
 \begin{equation}\nu(a)=\left\{
             \begin{array}{ll}
               \frac{(\sqrt{a_1^2+a_2^2+a_3^2}+a_1)-a_3{\bf j}+a_2{\bf k}}{\sqrt{2(a_1^2+a_2^2+a_3^2)+2a_1\sqrt{a_1^2+a_2^2+a_3^2}}}, & \hbox{provided}\ a_2^2+a_3^2\neq 0; \\
               {\bf j}, & \hbox{provided}\ a_1<0; \\
              1, & \hbox{otherwise}
             \end{array}
           \right.\end{equation}
           and
           \begin{equation}\sigma(a,b)=\left\{
             \begin{array}{ll}
               \sqrt{\frac{a_2+a_3{\bf i}}{\sqrt{a_2^2+a_3^2}}}, & \hbox{provided}\ a_2^2+a_3^2\neq 0; \\
               \sqrt{\frac{b_2+b_3{\bf i}}{\sqrt{b_2^2+b_3^2}}}, & \hbox{provided}\ b_2^2+b_3^2\neq 0; \\
              1, & \hbox{otherwise.}
             \end{array}
           \right.\end{equation}
       We mention that $|\nu(a)|=|\sigma(a,b)|=1$  and $\nu(a)$ and  $\sigma(a,b)$  can be viewed as orthogonal rotations such that
 $$\nu(a)^{-1} a \ \nu(a)=a_0+\sqrt{a_1^2+a_2^2+a_3^2}\ {\bf i}$$
 and
 $$\sigma(a,b)^{-1} a  \sigma(a,b)=a_0+a_1{\bf i}+\sqrt{a_2^2+a_3^2}\ {\bf j},\ \mbox{provided}\  a_2^2+a_3^2\neq 0$$
 or
$$\sigma(a,b)^{-1} b  \sigma(a,b)=b_0+b_1{\bf i}+\sqrt{b_2^2+b_3^2}\ {\bf j},\ \mbox{provided}\  a_2^2+a_3^2=0,  b_2^2+b_3^2\neq 0.$$

Let $\mathcal{M}(n)$ be the configuration space of  quadruples of pairwise distinct points in $\partial {\bf H}_{\bh}^n$, that is, the quotient of the set of quadruples of pairwise distinct points in $\partial {\bf H}_{\bh}^n$  with respect to the diagonal action of ${\rm PSp}(n,1)$ equipped with the quotient topology.

Let $m(\mathfrak{p})\in \mathcal{M}(n)$ be the point represented by $\mathfrak{p}=(p_1,p_2,p_3,p_4)$.  We  define the map
\begin{equation}
\tau:\mathcal{M}(n)\to \bc^3\times \br\times \br
\end{equation}
by the formula
\begin{equation}\label{dfntau}
\tau: m(\fp)\longmapsto (c_1,c_2,c_3,t;\ba),
 \end{equation}
 where $(c_1,c_2,c_3,t;\ba)$ are determined by the following three steps:

{\it Step 1 : Determine $\lambda_i,i=1,2,3,4.$}

 Let ${\bf p}=({\bf p}_{1},{\bf p}_{2},{\bf p}_{3}, {\bf p}_{4})$ be an arbitrary lift of $\fp$.  It follows from Proposition \ref{prop-1.1} that $\langle {\bf p}_{i}, {\bf p}_{j}\rangle\neq 0$ for $i\neq j$.

Set   \begin{equation}\label{flam2-4}\lambda_2=\langle {\bf p}_{2}, {\bf p}_{1}\rangle^{-1},\ \lambda_3=\langle {\bf p}_{3}, {\bf p}_{2}\rangle^{-1}\langle {\bf p}_{1}, {\bf p}_{2}\rangle, \ \lambda_4=\langle {\bf p}_{4}, {\bf p}_{3}\rangle^{-1}\langle {\bf p}_{2}, {\bf p}_{3}\rangle \langle {\bf p}_{2}, {\bf p}_{1}\rangle^{-1} \end{equation}
 and
 \begin{equation}\label{flam1}\lambda_1=\frac{\nu( \langle {\bf p}_{1}, {\bf p}_{3}\lambda_3\rangle)}{\sqrt{| \langle {\bf p}_{1}, {\bf p}_{3}\lambda_3\rangle |}}=\frac{\nu(\langle {\bf p}_{2}, {\bf p}_{1}\rangle \langle {\bf p}_{2}, {\bf p}_{3}\rangle^{-1} \langle {\bf p}_{1}, {\bf p}_{3}\rangle)}{\sqrt{|\langle {\bf p}_{2}, {\bf p}_{1}\rangle \langle {\bf p}_{2}, {\bf p}_{3}\rangle^{-1} \langle {\bf p}_{1}, {\bf p}_{3}\rangle|}}.\end{equation}

 {\it Step 2 : Determine $\mu$.}

Set
 \begin{equation}\label{fmu} \mu= \sigma(\langle {\bf p}_{1}\lambda_1, {\bf p}_{4}\lambda_4 \bar{\lambda_1}^{-1} \rangle,  \langle {\bf p}_{2}\lambda_2 \bar{\lambda_1}^{-1}, {\bf p}_{4}\lambda_4 \bar{\lambda_1}^{-1}\rangle).\end{equation}

  {\it Step 3 : Determine $(c_1,c_2,c_3,t;\ba)$.}

  Set \begin{equation}\label{findrep} c_1+t{\bf j}=\bar{\mu}\langle {\bf p}_{1}\lambda_1, {\bf p}_{4}\lambda_4 \bar{\lambda_1}^{-1} \rangle \mu,\ c_2+c_3{\bf j}=\bar{\mu} \langle {\bf p}_{2}\lambda_2 \bar{\lambda_1}^{-1}, {\bf p}_{4}\lambda_4 \bar{\lambda_1}^{-1}\rangle \mu. \end{equation}

\medskip

  We will show in Section \ref{sect-moduli} that the parameters $(c_1,c_2,c_3,t;\ba)$ are independent of lifts $\p_i, i=1,2,3,4$ and are determined only by $p_i, i=1,2,3,4$.
\begin{dfn}
Let $\bm(2)$ be  the set of points $ (c_1,c_2,c_3,t;\ba)$ in $\bc^3\times \br\times \br$ defined by
\begin{equation}\label{condtmain-2} D(G)= 1+|c_1|^2+|c_2|^2+|c_3|^2+t^2-2\Re(c_1)+2\Re(c_2e^{-{\bf i}\ba})+2\Re\big((\bar{c_1}c_2+t\bar{c_3})e^{{\bf i}\ba}\big)=0\end{equation}
subject to the following restrictions:
\begin{equation}\label{restrictions-2}\ba\in[0,\pi/2], \ \  \Re(c_1\bar{c_2})+t\Re(c_3)\leq 0,\ \Re(c_2)\leq 0,\ \ t\geq 0,\ \ |c_1|^2+t^2\neq 0,\ \ |c_2|^2+|c_3|^2\neq 0.\end{equation}
Let $\bm(n) (n>2)$ be the set of points $ (c_1,c_2,c_3,t;\ba)$ in $\bc^3\times \br\times \br$ defined by
\begin{equation}\label{condtmain-n} D(G)= 1+|c_1|^2+|c_2|^2+|c_3|^2+t^2-2\Re(c_1)+2\Re(c_2e^{-{\bf i}\ba})+2\Re\big((\bar{c_1}c_2+t\bar{c_3})e^{{\bf i}\ba}\big)\leq 0\end{equation}
subject to the following restrictions:
\begin{equation}\label{restrictions-n}\ba\in[0,\pi/2], \ \  \Re(c_1\bar{c_2})+t\Re(c_3)\leq 0,\ \Re(c_2)\leq 0,\ \ t\geq 0,\ \ |c_1|^2+t^2\neq 0,\ \ |c_2|^2+|c_3|^2\neq 0.\end{equation}
\end{dfn}
We call $\bm(n)$ the moduli space for $\mathcal{M}(n)$. $\bm(n)$ is equipped with the topology induced from $\bc^3\times \br\times \br$.

\medskip

The other main result in this paper  is the following  theorem.

\begin{thm}\label{thm-moduli}
The configuration space  $\mathcal{M}(n)$   is homeomorphic to $\bm(n)$.
\end{thm}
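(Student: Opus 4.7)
The plan is to prove that $\tau$ is a well-defined continuous bijection between $\mathcal{M}(n)$ and $\bm(n)$ with continuous inverse, hence a homeomorphism. The underlying geometric idea is that Steps 1--3 in the definition of $\tau$ amount to choosing, for each ${\rm PSp}(n,1)$-orbit, a canonical lift $\widetilde{\p}_i$ (obtained from $\p_i \lambda_i$ followed by right-multiplication adjustments through $\bar{\lambda_1}^{-1}$ and conjugation by $\mu$) whose Gram matrix $G = (\langle \widetilde{\p}_i, \widetilde{\p}_j\rangle)$ assumes a standard form whose entries are exactly prescribed by $(c_1, c_2, c_3, t; \ba)$. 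In this canonical form the angular invariant $\ba = \ba_\bh(p_1, p_2, p_3)$ appears as the $(1,3)$-entry, while $c_1, c_2, c_3, t$ encode how $p_4$ sits relative to the canonical frame built from $p_1, p_2, p_3$.

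First, I would verify well-definedness. If the lifts $\p_i$ are replaced by $\p_i \eta_i$ with $\eta_i \in \bh \setminus\{0\}$, formulas (\ref{flam2-4})--(\ref{fmu}) show that each $\lambda_i$ and $\mu$ transforms in a way that cancels in (\ref{findrep}); similarly, any $h \in {\rm Sp}(n,1)$ preserves all inner products, so $\tau$ is invariant under the diagonal ${\rm PSp}(n,1)$-action. Next, I would check that $\tau$ lands in $\bm(n)$ by reading off the canonical Gram matrix directly: $\langle \widetilde{\p}_i, \widetilde{\p}_i\rangle = 0$ since the $p_i$ lie on $\partial{\bf H}_\bh^n$, while $\langle \widetilde{\p}_1, \widetilde{\p}_2\rangle = 1$ by construction and $\langle \widetilde{\p}_1, \widetilde{\p}_3\rangle = e^{{\bf i}\ba}$ by the definition of $\nu$ combined with the quaternionic Cartan invariant from Definition~\ref{car32 angularh}. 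The fourth row and column are $c_1 + t{\bf j}$, $c_2 + c_3{\bf j}$ and $1$ (after normalization). The restrictions $\ba \in [0,\pi/2]$, $\Re(c_2) \leq 0$, $t \geq 0$ and the non-vanishing conditions follow from the piecewise definitions of $\nu, \sigma$ and from the pairwise distinctness of the $p_i$. The condition $D(G) = 0$ (for $n=2$) or $D(G) \leq 0$ (for $n>2$) is the signature constraint on a Hermitian Gram matrix of four null vectors in $\bh^{n,1}$: for $n=2$ the four vectors lie in a $3$-dimensional subspace so the determinant vanishes, while for $n \geq 3$ one only has an inequality.

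For injectivity, suppose $\tau(m(\fp)) = \tau(m(\fq))$. Then the canonical Gram matrices of $\fp$ and $\fq$ coincide, and I would invoke the quaternionic Gram matrix realization lemma (analogous to the complex version of Cunha--Gusevskii) to produce $h \in {\rm Sp}(n,1)$ carrying the canonical lift of $\fp$ to that of $\fq$, hence $h \cdot p_i = q_i$ in ${\rm PSp}(n,1)$. For surjectivity, given $(c_1,c_2,c_3,t;\ba)$ satisfying (\ref{condtmain-2})--(\ref{restrictions-n}), I would write down the explicit $4 \times 4$ Hermitian matrix $G$ with the prescribed entries, check via the constraints on $D(G)$ that $G$ has signature bounded by $(n,1)$ with zero diagonal, and then realize $G$ as the Gram matrix of four null vectors $\p_i \in \bh^{n,1}$ by quaternionic Gram-Schmidt. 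The restrictions $|c_1|^2 + t^2 \neq 0$ and $|c_2|^2 + |c_3|^2 \neq 0$ ensure pairwise distinctness of the resulting $p_i \in \partial{\bf H}_\bh^n$, and direct calculation shows $\tau$ returns the prescribed parameters.

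Finally, continuity of $\tau$ follows from continuity of the quaternionic arithmetic operations in Steps 1--3, and continuity of $\tau^{-1}$ from continuity of the Gram matrix realization, yielding the homeomorphism. The main obstacle is the careful handling of the piecewise definitions of $\nu$ and $\sigma$: one must verify that the canonical form is insensitive to the residual choice of $\mu$ (right-multiplication by a quaternionic unit commuting with the normalized entries) and that the various cases of $\nu, \sigma$ glue continuously at the boundaries of their pieces. A secondary technical point is showing that the defining inequalities of $\bm(n)$ exactly cut out the image of $\tau$ with no spurious components, which requires a careful analysis of when a Hermitian matrix with the prescribed canonical shape can be realized as the Gram matrix of four null vectors in $\bh^{n,1}$.
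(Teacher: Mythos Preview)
Your proposal is correct and follows essentially the same route as the paper: the paper packages your well-definedness and ``$\tau$ lands in $\bm(n)$'' steps into Proposition~\ref{normprocess} and Proposition~\ref{negreal}, your surjectivity/realization step into Theorem~\ref{homeo}, and your injectivity step into Proposition~\ref{sect5-prop}, then simply declares the homeomorphism obvious from the induced topology. One small slip: in the canonical Gram matrix the $(1,3)$-entry is $-e^{{\bf i}\ba}$, not $e^{{\bf i}\ba}$; and your caution about continuity at the branch loci of $\nu$ and $\sigma$ is well placed, since the paper glosses over this point entirely.
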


 The paper is organized  as follows. Section \ref{sect-pre} contains  some basic facts in  quaternionic hyperbolic geometry and the proof of  Propositions \ref{prop-1.1}, \ref{prop-1.2}.   Section \ref{sect-geoinv} is devoted to  developing  some geometric invariants and geometric tools in quaternionic hyperbolic geometry.  These are   quaternionic cross-ratio, quaternionic Cartan's angular  invariant and some distance formulas.  Sections \ref{cong-trip}  contains the proof of Theorem \ref{thm-triple}.  In Section \ref{sect-moduli},  we  introduce the   Gram matrix in quaternionic hyperbolic geometry.  Besides the proof of Theorem  \ref{thm-moduli}, we also  obtain a theorem  (Theorem \ref{thmcong}) about the congruence classes of quadruples of pairwise distinct points on $\partial{\bf H}_\bh^n$.

  \section{Preliminaries}\label{sect-pre}

 We briefly recall  some  necessary material on  quaternionic hyperbolic geometry  here  and we refer to \cite{apakim07,chegre74,kimpar03} for further details.

We recall that a  quaternion is of the form $a=a_0+a_1{\bf i}+a_2{\bf j}+a_3{\bf k}\in \bh$
where $a_i\in \br$ and $ {\bf i}^2 = {\bf j}^2 = {\bf k}^2 = {\bf
i}{\bf j}{\bf k} = -1.$ Let $\overline{a}=a_0-a_1{\bf i}-a_2{\bf
j}-a_3{\bf k}$ and $|a|= \sqrt{\overline{a}a}=\sqrt{a_0^2+a_1^2+a_2^2+a_3^2}$  be the  conjugate  and modulus of $a$, respectively.  We define $\Re(a)=(a+\overline{a})/2$ and $\Im(a)=(a-\overline{a})/2$.   For $a,b\in \bh$, we have
$$\Re(ab)=\Re(ba)=\Re(\bar{a}\bar{b})=\Re(\bar{b}\bar{a}).$$  Two quaternions $a$ and $b$ are similar, denoted by $a\sim b$,  if there exists nonzero $\lambda \in \bh$  such that $b=\lambda a
\lambda^{-1}$.  We mention that $a\sim b$  if and only if  $\Re(a)=\Re(b)$  and  $|a|=|b|$.   Let  $$\bs=\{\nu=\nu_1{\bf i}+\nu_2{\bf j}+\nu_3{\bf k}:
 \nu_1^2+\nu_2^2+\nu_3^2=1,v_i\in \br\}.$$
    Every  unit quaternion $\nu$ can  be written  as
$$\nu=\exp(\theta {\bf I}):=\cos \theta +{\bf I} \sin \theta=\cos(-\theta) +(-{\bf I}) \sin(-\theta)\,\, \, \, \ \  \mbox{
for some}\ \theta \in [0,\pi]\;\;\mbox{and}\;\;{\bf I}\in \bs.$$
 It is useful to  view $\bh$ as  $\bh=\bc\oplus  \bc {\bf j}$. Therefore any quaternion $a=a_0+a_1{\bf i}+a_2{\bf j}+a_3{\bf k}$ can be uniquely expressed as
 $$a=(a_0+a_1{\bf i})+(a_2+a_3{\bf i}){\bf j}=c_1+c_2 {\bf j}=c_1+{\bf j}\bar{c_2}.$$

  Let $\bh^{n,1}$ be the  vector space   with the Hermitian form of signature $(n,1)$ given by
$$
\langle{\bf z},\,{\bf w}\rangle={\bf w}^*J{\bf z}=
\overline{w_1}z_{n+1}+\overline{w_2}z_{2}+\cdots+\overline{w_n}z_{n}+\overline{w_{n+1}}z_{1}$$ with matrix $$J=\left(
                  \begin{array}{ccc}
                    0 & 0 & 1 \\
                    0 & I_{n-1} & 0 \\
                    1 & 0 & 0\\
                  \end{array}
                \right).$$
We define  $${\rm Sp}(n,1)=\{g\in GL(n+1,\bh):A^*JA=J\}.$$
  Let   $g\in {\rm Sp}(n,1)$. Then $g$ and $g^{-1}$ are of the following forms:
\begin{equation}\label{hform}
  g=\left(
  \begin{array}{ccc}
     a& \zeta^*& b \\
    \alpha & A& \beta\\
    c & \delta^*& d\\
    \end{array}
\right), \ \  g^{-1}=\left(
  \begin{array}{ccc}
     \overline{d}& \beta^*& \overline{b} \\
    \delta & A^*& \zeta\\
    \overline{c} & \alpha^*& \overline{a}\\
    \end{array}
\right), \end{equation} where $a, b, c, d\in\bh$, $A$ is an
$(n-1)\times (n-1)$ matrix over $\bh$, and $\alpha, \beta, \zeta,
\delta$ are column vectors in $\bh^{n-1}$.

Following Section 2 of \cite{chegre74}, let
\begin{eqnarray*}
V_0 =  \Bigl\{{\bf z} \in  \bh^{n,1}\setminus\{0\}:
\langle{\bf z},\,{\bf z}\rangle=0\Bigr\},\,\,
V_{-} = \Bigl\{{\bf z} \in \bh^{n,1}:\langle{\bf z},\,{\bf
z}\rangle<0\Bigr\}.
\end{eqnarray*}
   Let
$\bp:\bh^{n,1}\setminus\{0\}\longrightarrow \bh\bp^n$ be the  right projection onto $\bh$-projective space.
 If $z_{n+1}\neq 0$ then
$\bp$ is given by
$$
\bp(z_1,\,\ldots,\,z_n,  z_{n+1})^T=(z_1z_{n+1}^{-1},\cdots,z_n
z_{n+1}^{-1})^T\in{\bh }^n.
$$
We also define
$$
\bp(z_1, 0, \,\ldots,\,0,0)^T=\infty, \ \  \bp(0, 0, \,\ldots,\,0,z_{n+1})^T=o.
$$
The Siegel domain model of the quaternionic hyperbolic $n$-space is defined to be ${\bf H}_{\bh}^n=\bp(V_-)$ with the boundary $\partial {\bf H}_{\bh}^n=\bp(V_0)$.
 We mention that  $g\in{\rm Sp}(n,1)$ acts on ${\bf H}_\bh^n\cup\partial{\bf H}_\bh^n$ as $
g(z)=\bp g\bp^{-1}(z)$. The Bergman metric on
${\bf H}_{\bh}^n$ is given by the distance formula
$$
\cosh^2\frac{\rho(z,w)}{2}=
\frac{\langle{\bf z},\,{\bf w}\rangle \langle{\bf w},\,{\bf z}\rangle}
{\langle{\bf z},\,{\bf z}\rangle \langle{\bf w},\,{\bf w}\rangle},
\ \ \mbox{where}\ \ z,w \in {\bf H}_{\bh}^n, \ \
{\bf z}\in \bp^{-1}(z), {\bf w}\in \bp^{-1}(w).
$$
The holomorphic isometry group of  ${\bf H}_{\bh}^n$ is ${\rm PSp}(n,1)={\rm Sp}(n,1)/\pm I_{n+1}$.

The   {\it standard lift}  of  a finite point $z\in \overline{{\bf H}_{\bh}^n}$ is  $$\hat {\bf z}=\left(
                   \begin{array}{c}
                     z \\
                    1 \\
                   \end{array}
                 \right)
\in  \bp^{-1}(z)$$ and  the {\it  standard lift} of $\infty$  is  \begin{equation}\hat{\bf \infty} =(-1,0,\cdots,0)^T.\end{equation}
We will follow the above convention without any other statements in the sequel.

\medskip

We close this section with the proof of Propositions \ref{prop-1.1},\ref{prop-1.2}.

\medskip

\noindent {\it Proof of Proposition \ref{prop-1.1}.}\quad Since ${\rm Sp}(n,1)$ acts transitively on  $\bh$-lines in $V_-$ and  doubly transitively on $\bh$-lines in  $V_{0}$,  and each isometry of  ${\rm Sp}(n,1)$  preserves the Hermitian form,  we  only need to  consider the following  two cases.

Firstly, we assume that  ${\bf z}=(0,\cdots,0,1)^T\in V_{0}$ and ${\bf w}=(w_1,w_2,\cdots,w_{n+1})^{T}\in V_{0}\cup V_{-}$. Then
$\langle{\bf w},\,{\bf w}\rangle=\overline{w_1}w_{n+1}+\overline{w_{n+1}}w_{1}+\sum_{i=2}^n |w_i|^2\leq 0
$ and $\langle{\bf z},\,{\bf w}\rangle=\overline{w_1}$. If $w_1=0$ then we see $w_i=0$ for $i=2,\cdots,n$ and so ${\bf w}={\bf z}w_{n+1}$.

 Secondly, we assume that  ${\bf z}=(-1,0,\cdots,0,1)^T\in V_{-}$. Then  $\langle{\bf z},\,{\bf w}\rangle=\overline{w_1}- \overline{w_{n+1}}$. Since $\langle{\bf w},\,{\bf w}\rangle\leq 0 $  we have  $\overline{w_1}\neq \overline{w_{n+1}}$, which implies that  $\langle{\bf z},\,{\bf w}\rangle\neq 0$. \hfill$\square$

\medskip

 \noindent {\it Proof of Proposition \ref{prop-1.2}.}\quad   By  properties of quaternions, we can verify that $$\langle {\p_1}\lambda_1, {\p_2}\lambda_2, {\p_3}\lambda_3\rangle=|\lambda_2\lambda_3|^2\bar{\lambda_1}\langle {\p_1}, {\p_2}, {\p_3}\rangle \lambda_1,$$  $$\Re(\langle {\p_1}, {\p_2}, {\p_3}\rangle )=\Re(\langle \p_2, \p_1 \rangle \langle \p_3, \p_2\rangle \langle \p_1, \p_3 \rangle)=\Re(\langle {\p_2}, {\p_3}, {\p_1}\rangle )=\Re(\langle {\p_3}, {\p_1}, {\p_2}\rangle )$$
  and $$ \Re(\langle {\p_1}, {\p_2}, {\p_3}\rangle )=\Re(\overline{\langle {\p_1}, {\p_2}, {\p_3}\rangle})=\Re(\langle {\p_2}, {\p_1}, {\p_3}\rangle ).$$
   These equalities imply that the sign of $\Re(\langle {\p_1}, {\p_2}, {\p_3}\rangle)$  is  independent of the choice of the lifts ${\p_1}, {\p_2}, {\p_3}$ and  invariant under permutations of the points ${\p_1}, {\p_2}, {\p_3}$.  Due to fact that the sign of $\Re(\langle {\p_1}, {\p_2}, {\p_3}\rangle)$ is invariant under the action of  isometries in  ${\rm Sp}(n,1)$, by the transitivity of ${\rm Sp}(n,1)$ on $\overline{{\bf H}_{\bh}^n}$,  we need only to consider the following two cases.

 Case (1):  One of $p_1, p_2, p_3$ lies on   $\partial {\bf H}_{\bh}^n$.

 We may assume that $p_1=\infty$ and $p_2=(u_1,\cdots,u_n)^T,\  p_3=(r_1,\cdots,r_n)^T\in \overline{{\bf H}_{\bh}^n}$.
 Note that  $$\langle {\hat{\p_1}}, {\hat{\p_2}},  {\hat{\p_3}}\rangle=r_1+\bar{u_1}+ \sum_{i=2}^{n}\bar{u_i}r_i$$
and  $-\Re(u_1)\geq \frac{1}{2}\sum_{i=2}^{n}|u_i|^2$ and  $-\Re(r_1)\geq \frac{1}{2}\sum_{i=2}^{n}|r_i|^2$.
Therefore
\begin{eqnarray*}\Re(\langle {\hat{\p_1}}, {\hat{\p_2}},  {\hat{\p_3}}\rangle)&=& \Re(u_1)+\Re(r_1)+\Re(\sum_{i=2}^{n}\bar{u_i}r_i)\\
&\leq&  \Re(u_1)+\Re(r_1)+\sqrt{\sum_{i=2}^{n}{|u_i|^2}\sum_{i=2}^{n}{|r_i|^2}}\\
&\leq& \Re(u_1)+\Re(r_1)+2\sqrt{\Re(-u_1)\Re(-r_1)}\\
&=& -\Bigr(\sqrt{-\Re(u_1)}-\sqrt{-\Re(r_1)}\Bigl)^2\leq 0.
\end{eqnarray*}

Case (2):  All three points  $p_1, p_2, p_3$  lie in ${\bf H}_{\bh}^n$.

We may assume that $p_1=(-1,0,\cdots,0)^T$ and $p_2=(u_1,\cdots,u_n)^T, p_3=(r_1,\cdots,r_n)^T$. Then
 $$-\Re(u_1)> \frac{1}{2}\sum_{i=2}^{n}|u_i|^2,\  -\Re(r_1)>\frac{1}{2}\sum_{i=2}^{n}|r_i|^2$$  and
 $$\langle {\hat{\p_1}}, {\hat{\p_2}},  {\hat{\p_3}}\rangle=(u_1-1)(r_1+\bar{u_1}+ \sum_{i=2}^{n}\bar{u_i}r_i)(\bar{r_1}-1).$$
For simplicity, let $\lambda= \sum_{i=2}^{n}\bar{u_i}r_i$.  Then we have $$|\lambda|\leq 2\sqrt{\Re(-u_1)\Re(-r_1)}$$ and
\begin{eqnarray*}&&\Re((r_1+\bar{u_1})(\bar{r_1}-1)(u_1-1))=-|r_1+\bar{u_1}|^2+(|r_1|^2+1)\Re(u_1)+(|u_1|^2+1)\Re(r_1)\\
&=&-|r_1+\bar{u_1}|^2+4\Re(u_1)\Re(r_1)+(|r_1|^2-2\Re(r_1)+1)\Re(u_1)+(|u_1|^2-2\Re(u_1)+1)\Re(r_1)\\
&=&-|r_1-u_1|^2+|\bar{r_1}-1|^2\Re(u_1)+|u_1-1|^2\Re(r_1).
\end{eqnarray*}
Therefore
\begin{eqnarray*}
\Re( \langle {\hat{\p_1}}, {\hat{\p_2}},  {\hat{\p_3}}\rangle )&=&\Re((r_1+\bar{u_1}+t)(\bar{r_1}-1)(u_1-1))\\
&=&\Re((r_1+\bar{u_1})(\bar{r_1}-1)(u_1-1))+\Re(t(\bar{r_1}-1)(u_1-1))\\
&\leq &\Re((r_1+\bar{u_1})(\bar{r_1}-1)(u_1-1))+|t||(\bar{r_1}-1)(u_1-1)|\\
&\leq &\Re((r_1+\bar{u_1})(\bar{r_1}-1)(u_1-1))+2\sqrt{\Re(-u_1)\Re(-r_1)}|(\bar{r_1}-1)(u_1-1)|\\
&=&-|r_1-u_1|^2-\Bigr(|\bar{r_1}-1|\sqrt{\Re(-u_1)}-|u_1-1|\sqrt{\Re(-r_1)}\Bigl)^2\\
&\leq &  0.
\end{eqnarray*} \hfill$\square$

\medskip

\section{ Quaternionic geometric invariants}\label{sect-geoinv}

\subsection{Quaternionic cros-ratios}

 The complex cross-ratio given by (\ref{complexcross}) is independent of the choice of $\p_i$. This property enables one to use the cross-ratios either in $\partial {\bf H}_{\bc}^n$ or $V_0$ freely. In an inner product space, the concept of cross-ratio  always stems  from its inner products.   Due to the non-commutativity of quaternions, $\overline{{\bf H}_{\bh}^n}$  and $V_0\cup V_-$  are two different worlds for the concept of quaternionic cross-ratio. In this subsection, we  define two  quaternionic cross-ratios. The first one is defined on  $V_0\cup V_-$  and the second one is defined on  $\overline{{\bf H}_{\bh}^n}$.

Let $\p=(\p_1,\p_2,\p_3,\p_4)$ be a quadruple of points in $V_0\cup V_-$, we define
$$\bp(\p)=\big(\bp(\p_1),\bp(\p_2),\bp(\p_3),\bp(\p_4)\big)\in (\overline{{\bf H}_{\bh}^n})^4 .$$

\begin{dfn}Let $\p=(\p_1,\p_2,\p_3,\p_4)$ be a quadruple of points in $V_0\cup V_-$ such that $\bp(\p)$  are pairwise distinct points in $\overline{{\bf H}_{\bh}^n}$.   The quaternionic cross-ratio of $\p=(\p_1,\p_2,\p_3,\p_4)$ in $V_0\cup V_-$  is defined as
\begin{equation}\label{Vcross}
 \X(\p)=\X(\p_1,\p_2,\p_3,\p_4):=\langle \p_3, \p_1\rangle \langle \p_3, \p_2
 \rangle^{-1} \langle \p_4, \p_2\rangle \langle \p_4, \p_1\rangle^{-1}.
 \end{equation}
\end{dfn}
Proposition \ref{prop-1.1} implies that the definition above  is well defined.  We  mention that the quaternionic cross-ratio above has been used by Platis to obtain  the Ptolemaean inequality  in the quaternionic hyperbolic space  \cite{platis}.

Observe that, for  nonzero quaternions $\lambda_i,i=1,\cdots,4$,
\begin{equation}\label{crat1sim}
 \X(\p_1\lambda_1,\p_2\lambda_2,\p_3\lambda_3,\p_4\lambda_4)=\overline{\lambda}_1 \X(\p_1,\p_2,\p_3,\p_4)\overline{\lambda}_1^{-1}.\end{equation}
The above equality is  dominant among the properties of quaternionic cross-ratio in  $V_0\cup V_-$.

\begin{dfn} \label{crossratio} Let  $\fp=(p_1,p_2,p_3,p_4)$ be any quadruple of  pairwise distinct points in
$\overline{{\bf H}_{\bh}^n}$.  The quaternionic cross-ratio of  $\fp=(p_1,p_2,p_3,p_4)$ in $\overline{{\bf H}_{\bh}^n}$ is defined as
\begin{equation}\label{cross1}
 \X(\fp)=\X(p_1,p_2,p_3,p_4):=\X(\hat\p_1,\hat\p_2,\hat\p_3,\hat\p_4),
 \end{equation}
 where $\hat\p_i$ is the standard lift of $p_i$.
\end{dfn}

We mention that    the restriction of  Definition \ref{crossratio} on  $\partial{{\bf H}_{\bc}^n}$ is identical with the Kor\'anyi-Reimann  complex cross-ratio \cite{korrei87}. We are now ready to obtain some properties of quaternionic cross-ratios.

\begin{prop}  Let  $\fp=(p_1,p_2,p_3,p_4)$ be any quadruple of  pairwise distinct points in
$\overline{{\bf H}_{\bh}^n}$ with lifts $\p_1,\p_2,\p_3,\p_4$ and  $h\in {\rm Sp}(n,1)$.  Then
 \begin{itemize}
   \item[(i)] \quad $
\Re\big(\X(\fp)\big)=\Re\big(\X(\p)\big)=\Re\big(\X(h(p_1),h(p_2),h(p_3),h(p_4))\big);$
   \item[(ii)] \quad $|\X(\fp)|= |\X(\p)|=|\X(h(p_1),h(p_2),h(p_3),h(p_4))|.$
 \end{itemize}
 \end{prop}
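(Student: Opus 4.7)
The plan is to reduce both parts to two ingredients already visible in the excerpt: the transformation rule \eqref{crat1sim} and the elementary algebraic fact, recalled in Section \ref{sect-pre}, that similar quaternions share both real part and modulus. As a first step I would record the immediate consequence that for any $x\in\bh$ and any nonzero $\lambda\in\bh$ the conjugate $\bar\lambda\,x\,\bar\lambda^{-1}$ is similar to $x$, so
\[
\Re\bigl(\bar\lambda\,x\,\bar\lambda^{-1}\bigr)=\Re(x),\qquad \bigl|\bar\lambda\,x\,\bar\lambda^{-1}\bigr|=|x|.
\]
This is the workhorse for the whole argument.

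Next I would compare $\X(\p)$ with $\X(\fp)$. Since every lift $\p_i$ of $p_i$ differs from the standard lift by a nonzero quaternion, I write $\p_i=\hat\p_i\lambda_i$; then \eqref{crat1sim} gives
\[
\X(\p)=\bar\lambda_1\,\X(\hat\p_1,\hat\p_2,\hat\p_3,\hat\p_4)\,\bar\lambda_1^{-1}=\bar\lambda_1\,\X(\fp)\,\bar\lambda_1^{-1},
\]
and the preceding observation yields $\Re(\X(\p))=\Re(\X(\fp))$ and $|\X(\p)|=|\X(\fp)|$, which settles the first equality in each of (i) and (ii).

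For the invariance under $h\in{\rm Sp}(n,1)$ I would exploit that $h$ preserves the Hermitian form, so $\langle h\p_i,h\p_j\rangle=\langle \p_i,\p_j\rangle$ for every $i,j$. Plugging this into the definition \eqref{Vcross} gives the identity $\X(h\p_1,h\p_2,h\p_3,h\p_4)=\X(\p_1,\p_2,\p_3,\p_4)$ with the corresponding lifts on each side; specializing to the standard lifts one obtains $\X(h\hat\p_1,\dots,h\hat\p_4)=\X(\fp)$. However $h\hat\p_i$ is only \emph{some} lift of $h(p_i)$, not necessarily the standard one, so I would write $h\hat\p_i=\widehat{h(p_i)}\,\mu_i$ with $\mu_i\in\bh\setminus\{0\}$, and apply the middle-step argument to this pair of lift choices. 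This forces $\X(h(p_1),h(p_2),h(p_3),h(p_4))=\X(\widehat{h(p_1)},\dots,\widehat{h(p_4)})$ to differ from $\X(h\hat\p_1,\dots,h\hat\p_4)=\X(\fp)$ only by conjugation by $\bar\mu_1$, and the workhorse from the first paragraph finishes the proof.

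There is no real obstacle; the argument is pure bookkeeping about which lift one uses. The only subtle point is that in \eqref{crat1sim} only $\lambda_1$ appears in the outer conjugation, the factors $\lambda_2,\lambda_3,\lambda_4$ having cancelled in pairs inside the cross-ratio expression. It is precisely this phenomenon that makes $\Re\circ\X$ and $|\X|$ well-defined $\mathrm{Sp}(n,1)$-invariants even though $\X$ itself is only defined up to quaternionic similarity.
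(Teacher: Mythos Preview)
Your argument is correct and is essentially the paper's own proof: both write $\p_i=\hat\p_i\lambda_i$ and $h\hat\p_i=\widehat{h(p_i)}\,\mu_i$ (up to which factor is called $\mu_i$ versus $\mu_i^{-1}$), invoke \eqref{crat1sim}, and then use that similar quaternions share real part and modulus. The only difference is that you make explicit the step $\X(h\hat\p_1,\dots,h\hat\p_4)=\X(\hat\p_1,\dots,\hat\p_4)$ via $\langle h\p_i,h\p_j\rangle=\langle\p_i,\p_j\rangle$, which the paper leaves implicit.
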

\begin{proof} Noting that  $\p_i=\hat\p_i\lambda_i$ for some $\lambda_i\neq 0$ and the standard lift of $h(p_i)$ can be expressed as  $h\hat\p_i\mu_i, \mu_i\neq 0,$ we have the following two equations:
 $$\X(\p)=\X(\hat\p_1\lambda_1,\hat\p_2\lambda_2,\hat\p_3\lambda_3,\hat\p_4\lambda_4)=\overline{\lambda}_1 \X(\hat\p_1,\hat\p_2,\hat\p_3,\hat\p_4) \overline{\lambda}_1^{-1}=\overline{\lambda}_1 \X(\fp) \overline{\lambda}_1^{-1},$$
$$ \X(h(p_1),h(p_2),h(p_3),h(p_4))=\X(h\hat\p_1\mu_1,h\hat\p_2\mu_2,h\hat\p_3\mu_3,h\hat\p_4\mu_4)=\overline{\mu_1}\X(\fp)\overline{\mu}_1^{-1}.$$
The two equations above conclude the proof.
\end{proof}
We mention that those  properties  of quaternionic cross-ratio has been  used  in \cite{caopar11, kimpar03} to obtain the generalized J{\o}rgensen's inequalities in the quaternionic hyperbolic geometry.

  The following proposition displays the relationships of quaternionic cross-ratios  of a given quadruple under permutations of its points.  One should compare them with analogous results in \cite[Section 7.2]{gol99} and \cite{platis}.  The proof of the following proposition is by direct computations.
\begin{prop}
\begin{itemize}
  \item[(i)]\ \  $
\X(p_1, p_2, p_3, p_4)=\X(p_1,p_2,p_4,p_3)^{-1};$
 \item[(ii)]\ \  $
 \X(p_1, p_2, p_3,
p_4)\sim \X(p_2,p_1,p_3,p_4)^{-1};$
  \item[(iii)]\ \ $
 \X(p_1, p_2, p_3, p_4)\sim   \X(p_2,p_1,p_4,p_3) \sim
 \X(p_3,p_4,p_1,p_2)\sim  \X(p_4,p_3,p_2,p_1);$
  \item[(iv)]\begin{eqnarray*}
|\X(p_1,p_2, p_3, p_4) \X(p_1, p_4, p_2, p_3) \X(p_1, p_3, p_4, p_2)|
& = & | \X(p_1, p_2, p_3, p_4) \X(p_4, p_2, p_1, p_3) \X(p_3, p_2, p_4, p_1)|\\
& = & | \X(p_1, p_2, p_3, p_4) \X(p_4, p_2, p_3, p_1) \X(p_4, p_1, p_3, p_2)|\\
& = & | \X(p_1, p_2, p_3, p_4) \X(p_2, p_3, p_1, p_4) \X(p_3, p_1, p_2, p_4)|\\
& = & 1.
\end{eqnarray*}
\end{itemize}
\end{prop}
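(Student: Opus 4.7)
The plan is to verify each of (i)--(iv) by direct manipulation of the defining formula \eqref{Vcross}, exploiting the quaternionic arithmetic collected in Section~\ref{sect-pre}: Hermitian symmetry $\langle \p,\q\rangle=\overline{\langle \q,\p\rangle}$, the identities $(XY)^{-1}=Y^{-1}X^{-1}$, $|XY|=|X||Y|$ and $\overline{XY}=\bar Y\bar X$, together with the similarity criterion that $X\sim Y$ precisely when $\Re X=\Re Y$ and $|X|=|Y|$. Because the preceding proposition already shows that $\Re\X$ and $|\X|$ are invariant under rescaling the lifts, I may fix arbitrary lifts $\p_1,\ldots,\p_4$ throughout and treat scaling issues as inert whenever only the real part, modulus, or similarity class of $\X$ is in question.

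Items (i) and (ii) are the two base cases. For (i), writing $\X(p_1,p_2,p_4,p_3)$ out from \eqref{Vcross} simply reverses the order of the four factors appearing in $\X(p_1,p_2,p_3,p_4)$ and inverts each of them; the rule $(XY)^{-1}=Y^{-1}X^{-1}$ then gives the identity. For (ii), abbreviate $A=\langle\p_3,\p_1\rangle$, $B=\langle\p_3,\p_2\rangle$, $C=\langle\p_4,\p_2\rangle$, $D=\langle\p_4,\p_1\rangle$, so that $\X(p_1,p_2,p_3,p_4)=AB^{-1}CD^{-1}$. A short rewrite gives $\X(p_2,p_1,p_3,p_4)^{-1}=CD^{-1}AB^{-1}$, which is the cyclic rearrangement $(CD^{-1})(AB^{-1})$ of $(AB^{-1})(CD^{-1})$; since $XY=X(YX)X^{-1}\sim YX$ for any nonzero quaternion $X$, the two are similar.

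For (iii), I combine (i), (ii), and the Hermitian symmetry. First, $\X(p_2,p_1,p_4,p_3)=\X(p_2,p_1,p_3,p_4)^{-1}$ by (i), which is $\sim \X(p_1,p_2,p_3,p_4)$ by (ii). To handle $\X(p_3,p_4,p_1,p_2)$ and $\X(p_4,p_3,p_2,p_1)$ I use $\overline{XY}=\bar Y\bar X$ to rewrite each as the quaternionic conjugate of a cyclic rearrangement of $AB^{-1}CD^{-1}$, and then invoke $\bar z\sim z$ in $\bh$ together with cyclic similarity of quaternion products. For (iv) I pass to moduli, writing $|\X(p_1,p_2,p_3,p_4)|=r_{13}r_{24}/(r_{23}r_{14})$ with $r_{ij}=|\langle\p_i,\p_j\rangle|=r_{ji}$, and check by index-counting that in each of the four triple products the multiset of $r_{ij}$ appearing in the numerator matches the one in the denominator. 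The main obstacle throughout is purely combinatorial bookkeeping: in (iii) one must be careful to distinguish cyclic permutation of a product (yielding similarity) from the order-reversal induced by conjugation (yielding $\bar z$, again similar), and in (iv) the challenge is patiently tracking the dozen $r_{ij}$ factors arising in each triple product.
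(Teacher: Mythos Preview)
Your proposal is correct and follows exactly the approach the paper intends: the paper simply states that the proof ``is by direct computations,'' and your plan carries out those computations using the right quaternionic tools (cyclic similarity $XY\sim YX$, the fact that $\bar z\sim z$, Hermitian symmetry, and multiplicativity of modulus). The bookkeeping you outline in each of (i)--(iv) checks out.
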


\medskip

\subsection{Distance formulas}

We can relate the Bergman metric with the quaternionic cross-ratio as follows. We mention that analogous results in the 2- and the 4-dimensional unit ball have been obtained in \cite[p133]{bea83} and in  \cite[Section 5]{bisgen09}.
\begin{prop}\label{prop-bdzw}
Let $z,w\in {\bf H}_{\bh}^n$ and $\gamma_{zw}$ be the   geodesic connecting $z$ and $w$
with endpoints $ u$ and $v$ in  $\partial  {\bf H}_{\bh}^n$. Then
\begin{equation*}
\rho(z,w)=|\log(\X(z, u, w, v)-1)|.
 \end{equation*}
\end{prop}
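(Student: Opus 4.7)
The plan is to exploit the $\mathrm{Sp}(n,1)$-invariance of both sides to reduce to a canonical configuration, and then compute directly.

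First, I would verify that both $\rho(z,w)$ and $|\log(\X(z,u,w,v)-1)|$ are $\mathrm{Sp}(n,1)$-invariant. For the left side this is built into the Bergman distance. For the right side, the preceding proposition shows that under the action of $h\in\mathrm{Sp}(n,1)$ the cross-ratio transforms by conjugation, $\X(\fp)\mapsto \bar\mu_1\X(\fp)\bar\mu_1^{-1}$, so the same is true of $\X(\fp)-1$. Writing a nonzero quaternion as $q=|q|\exp(\theta\,\mathbf{I})$ with $\mathbf{I}\in\bs$ and $\theta\in[0,\pi]$, one has $\log q=\log|q|+\theta\,\mathbf{I}$ and $|\log q|^2=(\log|q|)^2+\theta^2$, which depends only on $|q|$ and $\Re(q)/|q|$. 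Both are preserved by quaternion conjugation, hence $|\log(\X-1)|$ is $\mathrm{Sp}(n,1)$-invariant.

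Next, I would normalize. Since $\mathrm{Sp}(n,1)$ acts doubly transitively on $\partial\mathbf{H}_\bh^n$, I may assume $u=\infty$ and $v=o$. The geodesic connecting $\infty$ and $o$ in the Siegel model is the real vertical axis $\{(-s,0,\ldots,0)^T:s>0\}$, so under the hypothesis that $\gamma_{zw}$ has endpoints $u,v$, the points $z,w$ themselves must lie on this axis. Thus I may take $z=(-s,0,\ldots,0)^T$ and $w=(-t,0,\ldots,0)^T$ for some positive reals $s,t$.

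Finally I would compute directly using the standard lifts $\hat u=(-1,0,\ldots,0)^T$, $\hat v=(0,\ldots,0,1)^T$, $\hat z=(-s,0,\ldots,0,1)^T$, $\hat w=(-t,0,\ldots,0,1)^T$. A direct calculation of the Hermitian form gives $\langle\hat w,\hat z\rangle=-(s+t)$, $\langle\hat w,\hat u\rangle=-1$, $\langle\hat v,\hat u\rangle=-1$, $\langle\hat v,\hat z\rangle=-s$, so that
$$\X(z,u,w,v)=\langle\hat w,\hat z\rangle\langle\hat w,\hat u\rangle^{-1}\langle\hat v,\hat u\rangle\langle\hat v,\hat z\rangle^{-1}=\frac{s+t}{s}=1+\frac{t}{s}.$$
Hence $\X(z,u,w,v)-1=t/s$ is a positive real and $|\log(\X(z,u,w,v)-1)|=|\log(t/s)|$. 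On the other side, the Bergman distance formula yields $\cosh^2(\rho(z,w)/2)=(s+t)^2/(4st)$, equivalently $\cosh(\rho(z,w)/2)=\tfrac12(\sqrt{s/t}+\sqrt{t/s})$, which gives $\rho(z,w)=|\log(t/s)|$.

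The main obstacles are the two foundational points at the start: making precise what $|\log q|$ means for a quaternion $q$ so that the invariance argument is watertight, and verifying that after sending $u,v$ to $\infty,o$ the geodesic joining them in the Siegel domain really is the real positive axis. Both are standard but must be invoked carefully; once they are in place the calculation in the normalized model is routine and the two sides match identically.
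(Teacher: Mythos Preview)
Your proof is correct and follows essentially the same strategy as the paper's: reduce to an explicit computation along the geodesic and verify that $\X(z,u,w,v)-1$ is a positive real whose logarithm has absolute value equal to $\rho(z,w)$. The only cosmetic difference is that the paper normalizes by choosing lifts of $u,v$ with $\langle {\bf u},{\bf v}\rangle=-1$ and using the arc-length parametrization $\gamma(t)=e^{t/2}{\bf u}+e^{-t/2}{\bf v}$ directly, which sidesteps the separate invariance argument you gave (in particular the discussion of $|\log q|$ for general quaternions is unnecessary, since the cross-ratio is already real).
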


\begin{proof}  As in \cite{caopar11},  for $u,v\in \partial
{\bf H}_{\bh}^n$ with  lifts ${\bf u}$ and ${\bf v}$ such that $\langle {\bf
u}, {\bf v} \rangle=-1,$ the  {\it geodesic} $\gamma_{uv} $ in ${\bf
H}_{\bh}^n$ with endpoints $u$ and  $v$ parameterized by arc length $t$ is given by  $\bp(\gamma(t))$, where $\gamma(t)=e^{\frac{t}{2}}{\bf u}+e^{-\frac{t}{2}}{\bf v}$.  Hence we
can choose the lifts ${\bf z}, {\bf w}$ of $z,w$ as
$${\bf z}=e^{\frac{t}{2}}{\bf u}+e^{\frac{-t}{2}}{\bf v},\,{\bf w}=e^{\frac{k}{2}}{\bf u}+e^{\frac{-k}{2}}{\bf
v}.$$ In this setting, $\rho(z,w)=|t-k|$ and $\X(z, u, w, v)=\X({\bf z},{\bf u}, {\bf w}, {\bf v})=1+e^{k-t}$.
\end{proof}

As in \cite{gol99}, by Proposition \ref{prop-1.1},  for  $u, v\in \partial {\bf H}_{\bh}^n$ and
$z\in {\bf H}_{\bh}^n$,  the following quaternion \begin{equation}\eta(u, v, z) = \langle \hat{\bf u}, \hat{\bf z}\rangle \langle \hat{\bf u}, \hat{\bf v}
 \rangle^{-1} \langle \hat{\bf z}, \hat{\bf v}\rangle \langle \hat{\bf z}, \hat{\bf z}\rangle^{-1}
\end{equation}
 is well defined.
By  abuse of notation, one can view $\eta(u, v, z)$ in form as  $$ \eta(u, v, z)=\X(\hat{\bf z}, \hat{\bf v}, \hat{\bf u}, \hat{\bf z})=\X(z,v,u,z).$$
From this  we know that the real part and the modulus of $\eta(u, v, z)$ are well defined in the quaternionic setting.

By repeating almost verbatim the arguments used for the complex  case in  Propositions 7.1 and 7.6 in \cite{park},  we obtain the following result.
\begin{prop}\label{d-geo}Let  $z\in {\bf
H}_{\bh}^n, u,v\in\partial {\bf H}_{\bh}^n$ and  $\gamma_{uv}$  be the  geodesic  connecting  $u$ and $v$.
 Then the hyperbolic  distance $\rho(\gamma_{uv},z)$ from $z$ to $\gamma$ is
given by
\begin{equation}\label{d-rgeo}\cosh^2\big(\frac{\rho(\gamma_{uv},z)}{2}\big) =¯¯|\eta(u, v, z)|+ \Re\big(\eta(u, v, z)\big)\end{equation}
and  the hyperbolic distance $\rho(L_{uv},z)$ from $z$ to $L$ is given by
\begin{equation}\label{d-lgeo}\cosh^2\big(\frac{\rho(L_{uv},z)}{2}\big) =¯¯2\Re\big(\eta(u, v, z)\big).\end{equation}
\end{prop}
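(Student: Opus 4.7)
The plan is to imitate the complex-hyperbolic arguments of~\cite{park} while carefully tracking the order of quaternionic factors. First I would normalize the lifts so that $\langle{\bf u},{\bf v}\rangle=-1$ (possible by Proposition~\ref{prop-1.1}) and set $a=\langle{\bf z},{\bf u}\rangle$, $b=\langle{\bf z},{\bf v}\rangle$, $R=-\langle{\bf z},{\bf z}\rangle>0$. Tracing through the rescalings that relate ${\bf u},{\bf v},{\bf z}$ to the standard lifts $\hat{\bf u},\hat{\bf v},\hat{\bf z}$, one finds $\eta(u,v,z)=\gamma^{-1}(\bar a\,b/R)\gamma$ for some nonzero $\gamma\in\bh$, so $\eta(u,v,z)$ is similar (in the sense of Section~\ref{sect-pre}) to $\bar a\,b/R$; in particular $|\eta(u,v,z)|=|a||b|/R$ and $\Re(\eta(u,v,z))=\Re(\bar a b)/R=\Re(a\bar b)/R$. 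It therefore suffices to verify both formulas with these explicit expressions on the right.

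For \eqref{d-rgeo} I would use the arc-length parametrization $\gamma(t)=e^{t/2}{\bf u}+e^{-t/2}{\bf v}$ from the proof of Proposition~\ref{prop-bdzw}. It satisfies $\langle\gamma(t),\gamma(t)\rangle=-2$ together with
$$\langle{\bf z},\gamma(t)\rangle\langle\gamma(t),{\bf z}\rangle=e^{t}|a|^{2}+e^{-t}|b|^{2}+2\Re(a\bar b),$$
so the Bergman distance formula yields $\cosh^{2}(\rho(z,\gamma(t))/2)=(e^{t}|a|^{2}+e^{-t}|b|^{2}+2\Re(a\bar b))/(2R)$. Minimizing over $t\in\br$ via the AM--GM bound $e^{t}|a|^{2}+e^{-t}|b|^{2}\geq 2|a||b|$ (equality at $e^{t}=|b|/|a|$) produces $\cosh^{2}(\rho(\gamma_{uv},z)/2)=(|a||b|+\Re(a\bar b))/R=|\eta(u,v,z)|+\Re(\eta(u,v,z))$, as required.

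For \eqref{d-lgeo} I would project ${\bf z}$ orthogonally onto $W=\mathrm{span}_{\bh}({\bf u},{\bf v})$. Since ${\bf u},{\bf v}$ are null with $\langle{\bf u},{\bf v}\rangle=-1$, the Hermitian form has signature $(1,1)$ on $W$ and is positive definite on $W^{\perp}$, so the foot of the perpendicular from $z$ to $L_{uv}$ is the $W$-component ${\bf z}_{W}$ of ${\bf z}$. Writing ${\bf z}_{W}={\bf u}\lambda+{\bf v}\mu$ and imposing the two perpendicularity conditions $\langle{\bf z}-{\bf z}_{W},{\bf u}\rangle=\langle{\bf z}-{\bf z}_{W},{\bf v}\rangle=0$ yields $\lambda=-b$ and $\mu=-a$, from which
$$\langle{\bf z}_{W},{\bf z}_{W}\rangle=-\bar\lambda\mu-\bar\mu\lambda=-2\Re(\bar\mu\lambda)=-2\Re(\bar a b).$$
Orthogonality yields $\langle{\bf z},{\bf z}_{W}\rangle=\langle{\bf z}_{W},{\bf z}_{W}\rangle$, so the Bergman formula collapses to $\cosh^{2}(\rho(L_{uv},z)/2)=\langle{\bf z}_{W},{\bf z}_{W}\rangle/\langle{\bf z},{\bf z}\rangle=2\Re(\bar a b)/R=2\Re(\eta(u,v,z))$.

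The main potential obstacle is organizational rather than conceptual: one must keep the non-commutative algebra in order and confirm that $\eta(u,v,z)$ enters the conclusion only through $|\eta|$ and $\Re(\eta)$, so that the similarity ambiguity in passing between normalized and standard lifts is harmless. One must also verify that $\langle{\bf z}_{W},{\bf z}_{W}\rangle\leq 0$ so that the Bergman formula is applicable in the line computation; this follows from the Pythagorean identity $\langle{\bf z},{\bf z}\rangle=\langle{\bf z}-{\bf z}_{W},{\bf z}-{\bf z}_{W}\rangle+\langle{\bf z}_{W},{\bf z}_{W}\rangle$ combined with positive-definiteness of the form on $W^{\perp}$.
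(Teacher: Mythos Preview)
Your argument is correct and is precisely the quaternionic transcription of Parker's complex-hyperbolic proofs that the paper invokes: the paper itself gives no independent argument, stating only that one may ``repeat almost verbatim the arguments used for the complex case in Propositions~7.1 and~7.6 in~\cite{park}''. You have carried out those details explicitly, with the appropriate care about non-commutativity and the observation that only the similarity-invariants $|\eta|$ and $\Re(\eta)$ enter the conclusion.
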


Let $z\in {\bf H}_{\bh}^n$,  $u,v\in\partial {\bf H}_{\bh}^n$  with  lifts  ${\bf z}, {\bf u}$ and
 ${\bf v}$ such that  $\langle {\bf u},{\bf v} \rangle=-1$. By Proposition \ref{d-geo},  the orthogonal projection $\gamma_{uv}(z)$  of $z\in {\bf H}_{\bh}^n$ to  the  geodesic $\gamma_{uv}$  endowed with hyperbolic distance $\rho$ is given by
\begin{equation}\label{prjAB}
\gamma_{uv}(z)=\bp(e^{\frac{t}{2}}{\bf u}+e^{-\frac{t}{2}}{\bf
v}),
 \end{equation}
where $e^t=|\frac{\langle {\bf v},{\bf z}\rangle}{\langle {\bf u},{\bf
z}\rangle}|$ and  ${\bf z}$  is an arbitrary lift of $z$.

\medskip

We need the following  formula of hyperbolic distance  form $z\in {\bf H}_{\bh}^n$ to the  the quaternionic line $L_{uw}$ spanned by $w\in \overline{{\bf H}_{\bh}^n}$  and $u\in   \partial {\bf H}_{\bh}^n$ later.

\begin{prop}\label{lem-dist}
Let $z\in {\bf H}_{\bh}^n$, $u\in   \partial {\bf H}_{\bh}^n$  and $v\in{\bf H}_{\bh}^n$.
Then
$$\cosh^2\big(\frac{\rho(L_{uv},z)}{2}\big)=2\Re\big(\eta(z,u,v)\big)-\frac{|\langle {\bf u},{\bf z}  \rangle|^2\langle {\bf v},{\bf v}  \rangle}{|\langle {\bf u},{\bf v}  \rangle|^2\langle {\bf z},{\bf z} \rangle}$$
and
$$\cosh^2\big(\frac{\rho(\gamma_{uv},z)}{2}\big)=\Bigr|  \X({\bf z}, {\bf v}, {\bf u}, {\bf z})-\frac{1}{2}\frac{|\langle {\bf u},{\bf z}  \rangle|^2\langle {\bf v},{\bf v}  \rangle}{|\langle {\bf u},{\bf v}  \rangle|^2\langle {\bf z},{\bf z} \rangle} \Bigl|+\Re\big(\eta(z,u,v)\big)-\frac{1}{2}\frac{|\langle {\bf u},{\bf z}  \rangle|^2\langle {\bf v},{\bf v}  \rangle}{|\langle {\bf u},{\bf v}  \rangle|^2\langle {\bf z},{\bf z} \rangle},$$
where ${\bf z}, {\bf u}$ and ${\bf v}$ and   are lifts of $z,u,v$, respectively.
\end{prop}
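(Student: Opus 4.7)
The plan is to reduce both identities to the boundary case already handled in Proposition \ref{d-geo} by extending through $u$ to reach the second boundary endpoint $v' \in \partial {\bf H}_{\bh}^n$ of the geodesic $\gamma_{uv}$. Since $v'$ will be constructed inside the quaternionic span of ${\bf u}$ and ${\bf v}$, we will have $L_{uv} = L_{uv'}$ and $\gamma_{uv} = \gamma_{uv'}$, so (\ref{d-lgeo}) and (\ref{d-rgeo}) apply with the pair $(u,v)$ replaced by $(u,v')$.

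To construct ${\bf v}'$, I would seek it in the form ${\bf v}' = {\bf v} + {\bf u}\mu$. Using $\langle {\bf a}\lambda, {\bf b}\nu\rangle = \bar{\nu}\langle {\bf a}, {\bf b}\rangle\lambda$ together with $\langle {\bf u}, {\bf u}\rangle = 0$, the null condition $\langle {\bf v}', {\bf v}'\rangle = 0$ collapses to $\langle {\bf v}, {\bf v}\rangle + 2\Re(\langle {\bf u}, {\bf v}\rangle\mu) = 0$. Proposition \ref{prop-1.1} guarantees $\langle {\bf u}, {\bf v}\rangle \neq 0$, so I would take $\mu = -\frac{\langle {\bf v}, {\bf v}\rangle}{2|\langle {\bf u}, {\bf v}\rangle|^2}\langle {\bf v}, {\bf u}\rangle$. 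Direct computation then gives $\langle {\bf u}, {\bf v}'\rangle = \langle {\bf u}, {\bf v}\rangle$ (the ${\bf u}\mu$ term contributes $\bar{\mu}\langle {\bf u},{\bf u}\rangle = 0$) and $\langle {\bf z}, {\bf v}'\rangle = \langle {\bf z}, {\bf v}\rangle + \bar{\mu}\langle {\bf z}, {\bf u}\rangle$, with the key observation that $\langle {\bf u}, {\bf v}\rangle^{-1}\bar{\mu} = -\frac{\langle {\bf v}, {\bf v}\rangle}{2|\langle {\bf u}, {\bf v}\rangle|^2} \in \br$.

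Substituting these into $\eta(u, v', z) = \langle {\bf u}, {\bf z}\rangle\langle {\bf u}, {\bf v}'\rangle^{-1}\langle {\bf z}, {\bf v}'\rangle\langle {\bf z}, {\bf z}\rangle^{-1}$ and pulling the real scalar out yields
$$\eta(u, v', z) = \eta(u, v, z) - \frac{1}{2}\frac{|\langle {\bf u}, {\bf z}\rangle|^2\langle {\bf v}, {\bf v}\rangle}{|\langle {\bf u}, {\bf v}\rangle|^2\langle {\bf z}, {\bf z}\rangle}.$$
Applying (\ref{d-lgeo}) to $L_{uv'}$ and taking twice the real part immediately produces the first formula; for the second, applying (\ref{d-rgeo}) to $\gamma_{uv'}$ and using $\eta(u, v, z) = \X({\bf z}, {\bf v}, {\bf u}, {\bf z})$ produces the second formula, since the correction term is real and therefore subtracts cleanly both from $\Re(\eta)$ and from inside $|\cdot|$. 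Lift-invariance of every ingredient on both sides ensures the identity is independent of the chosen representatives.

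The main obstacle is the careful non-commutative bookkeeping: one must verify that $\langle {\bf u}, {\bf v}\rangle^{-1}\bar{\mu}$ truly lies in $\br$ so that it commutes with everything and can be freely extracted from $\Re(\cdot)$ and $|\cdot|$, and one must argue that ${\bf v}'$ genuinely represents the second endpoint of $\gamma_{uv}$ rather than some unrelated null vector in the $\bh$-span of ${\bf u}$ and ${\bf v}$. The latter point is handled by rescaling ${\bf v}$ so that $\langle {\bf u}, {\bf v}\rangle \in \br_{>0}$, in which case $\mu$ is real and ${\bf v}' \in \br\text{-span}({\bf u}, {\bf v})$, which is exactly the totally real geodesic line containing $v$ and terminating at $u$.
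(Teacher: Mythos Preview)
Your argument is correct and follows essentially the same route as the paper's proof: both construct the second boundary endpoint of $\gamma_{uv}$ as an $\bh$-combination of ${\bf u}$ and ${\bf v}$, compute that $\eta(u,\,\cdot\,,z)$ picks up a \emph{real} additive correction when passing from ${\bf v}$ to that endpoint, and then invoke Proposition~\ref{d-geo}. The only cosmetic difference is that the paper first normalises lifts via the arc-length parametrisation ${\bf v}=e^{t/2}{\bf u}+e^{-t/2}{\bf w}$ with $\langle{\bf u},{\bf w}\rangle=-1$ (so that $\langle{\bf u},{\bf v}\rangle=-e^{-t/2}$, $\langle{\bf v},{\bf v}\rangle=-2$) and reads off the correction from those values, whereas you solve for $\mu$ directly from the null condition and check that $\langle{\bf u},{\bf v}\rangle^{-1}\bar\mu\in\br$; your lift-equivariance remark plays the role of the paper's closing sentence that $|\mu|$ and $\Re(\mu)$ are independent of the chosen lifts.
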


 \begin{proof}  Let $w$ be the other endpoint of the geodesic $\gamma_{uv}$ and  ${\bf u}$ and ${\bf w}$  be lifts of $u$ and $w$ such that $\langle {\bf u},{\bf w}  \rangle=-1$. Then there exist a $t\in \br$ and a lift  $\bf v$ of $v$ such that ${\bf v}=e^{\frac{t}{2}}{\bf u}+e^{-\frac{t}{2}}{\bf w}.$  Therefore
 \begin{equation}\label{extprop}{\bf w}=e^{\frac{t}{2}}{\bf v}-e^{t}{\bf u}, \ \langle {\bf u},{\bf v}  \rangle=-e^{-\frac{t}{2}}, \langle {\bf v},{\bf v}  \rangle=-2.\end{equation} Let $\mu= \X({\bf z}, {\bf w}, {\bf u}, {\bf z}).$  Then  by (\ref{extprop}) we can express $\mu$ as
\begin{eqnarray*}\mu&=&\langle {\bf u},{\bf z}\rangle \langle {\bf u},{\bf v}\rangle^{-1} \langle {\bf z},{\bf v}\rangle \langle {\bf z},{\bf z}\rangle^{-1}-\frac{1}{2}\frac{|\langle {\bf u},{\bf z}  \rangle|^2\langle {\bf v},{\bf v}  \rangle}{|\langle {\bf u},{\bf v}  \rangle|^2\langle {\bf z},{\bf z} \rangle}\\
&=& \X({\bf z}, {\bf v}, {\bf u}, {\bf z})-\frac{1}{2}\frac{|\langle {\bf u},{\bf z}  \rangle|^2\langle {\bf v},{\bf v}  \rangle}{|\langle {\bf u},{\bf v}  \rangle|^2\langle {\bf z},{\bf z} \rangle}.
\end{eqnarray*}
It is obvious that  $$\cosh^2\big(\frac{\rho(L_{uv},z)}{2}\big)= \cosh^2\big(\frac{\rho(L_{uw},z)}{2}\big),\ \ \cosh^2\big(\frac{\rho(\gamma_{uv},z)}{2}\big)=\cosh^2\big(\frac{\rho(\gamma_{uw},z)}{2}\big).$$  Note that $|\mu|$ and $\Re(\mu)$  are independent of the choice of lifts ${\bf z}, {\bf u}, {\bf v}$. The result follows from Proposition \ref{d-geo}. \end{proof}

\medskip

\subsection{Quaternionic Cartan's angular  invariant}

Recall from Section 1 that given  a triple  $\fp=(p_1,p_2,p_3)$   of  pairwise distinct points in $\overline{{\bf H}_{\bh}^n}$ with lifts  $\p_1, \p_2, \p_3$, then
\begin{equation} \label{angulars} \ba_{\bh}(\fp)= \ba_{\bh}(p_1,p_2,p_3)=\arccos \frac{\Re(-\langle \p_1, \p_2, \p_3\rangle)}{|\langle \p_1, \p_2, \p_3\rangle|}\end{equation}
is  the quaternionic Cartan's angular  invariant associated to $\fp$.
  This is the  angle  between the real axis and the radius vector of $\langle \p_1, \p_2,\p_3\rangle$  used by  Apanasov and Kim \cite{apakim07}.

As in the proof of Proposition \ref{prop-1.2}, we have
\begin{equation}\label{angularslif}  \ba_{\bh}(\fp)=\arccos \frac{\Re(-\langle \p_1\lambda_1,\p_2\lambda_2,\p_3\lambda_3\rangle)}{|\langle \p_1\lambda_1,\p_2\lambda_2,\p_3\lambda_3\rangle|}\end{equation}
and
$$ \ba_{\bh}(p_1,p_2,p_3)=\ba_{\bh}(p_{\iota(1)},p_{\iota(2)},p_{\iota(3)}),$$
 where $\iota$ is a permutation of $1,2,3$ and $\lambda_i\in \bh\setminus \{0\}$.
 Therefore it makes sense to define \begin{equation} \label{angularsv} \ba_{\bh}(\p)= \ba_{\bh}(\p_1,\p_2,\p_3)=\arccos \frac{\Re(-\langle \p_1, \p_2, \p_3\rangle)}{|\langle \p_1, \p_2, \p_3\rangle|}.\end{equation}
 We can verify that  $$\ba_{\bh}(\p)=\ba_{\bh}(\p_1,\p_2,\p_3)=\ba_{\bh}(\p_1\lambda_1,\p_2\lambda_2,\p_3\lambda_3)
 =\ba_{\bh}(\p_{\iota(1)}\lambda_{\iota(1)},\p_{\iota(2)}\lambda_{\iota(2)},\p_{\iota(3)}\lambda_{\iota(3)})$$
and \begin{equation} \ba_{\bh}(\fp)=\ba_{\bh}(g(p_1),g(p_2),g(p_3)),\forall g\in  {\rm Sp}(n,1).\end{equation}

The following properties are in \cite{apakim07} .
\begin{prop}\label{proptrip} Let $\fp=(p_1,p_2,p_3)$ be a triple of pairwise distinct points in   $\partial{\bf H}_{\bh}^n$. Then
\begin{itemize}
 \item[(i)] Three points $p_1,p_2,p_3$ lie in the same $\br$-circle if and only if  $\ba_{\bh}(\fp)=0$.
 \item[(ii)] Three points $p_1,p_2,p_3$ lie in the boundary of an $\bh$-line  if and only if  $\ba_{\bh}(\fp)=\pi/2$.
  \end{itemize}
  \end{prop}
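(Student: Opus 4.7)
The plan is to exploit the double transitivity of ${\rm Sp}(n,1)$ on $\partial{\bf H}_{\bh}^n$ to put the first two points in a canonical position, and then read off the conditions by direct computation. I would normalize to $p_1=\infty$ and $p_2=0$, with standard lifts $\hat{\p}_1=(-1,0,\ldots,0)^T$ and $\hat{\p}_2=(0,\ldots,0,1)^T$. Any other boundary point $p_3$ then admits a lift of the form $\hat{\p}_3=(-\lambda,v,1)^T$ with $\lambda\in\bh$, $v\in\bh^{n-1}$, and the null condition $\langle\hat{\p}_3,\hat{\p}_3\rangle=0$ reduces to $2\Re(\lambda)=|v|^2$.

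A short direct computation with the Hermitian form yields $\langle\hat{\p}_2,\hat{\p}_1\rangle=-1$, $\langle\hat{\p}_3,\hat{\p}_2\rangle=-\lambda$, and $\langle\hat{\p}_1,\hat{\p}_3\rangle=-1$, so
\[\langle\hat{\p}_1,\hat{\p}_2,\hat{\p}_3\rangle=-\lambda,\qquad \ba_{\bh}(\fp)=\arccos\frac{\Re(\lambda)}{|\lambda|}.\]
From this formula both items become transparent. For (i), $\ba_{\bh}(\fp)=0$ is equivalent to $\lambda$ being a non-negative real; coupled with $|v|^2=2\Re(\lambda)$, the further stabilizer of $(\infty,0)$ in ${\rm Sp}(n,1)$ contains the compact group ${\rm Sp}(n-1)$ acting on the middle $\bh^{n-1}$-coordinates, which acts transitively on spheres and can rotate $v$ to a real vector. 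After this further normalization $\hat{\p}_1,\hat{\p}_2,\hat{\p}_3$ all have real entries, so $p_1,p_2,p_3$ lie on the boundary of a totally real totally geodesic $2$-plane, i.e.\ an $\br$-circle. Conversely, any triple on an $\br$-circle can be moved by an isometry into such a real configuration, making $\langle\hat{\p}_1,\hat{\p}_2,\hat{\p}_3\rangle$ real; Proposition \ref{prop-1.2} then forces it to be non-positive, hence $\ba_{\bh}(\fp)=0$.

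For (ii), $\ba_{\bh}(\fp)=\pi/2$ is equivalent to $\Re(\lambda)=0$. Combined with $|v|^2=2\Re(\lambda)=0$, this forces $v=0$, so $\hat{\p}_3=(-\lambda,0,\ldots,0,1)^T$ lies in the $\bh$-span of $\hat{\p}_1$ and $\hat{\p}_2$; equivalently $p_3$ belongs to $L_{p_1p_2}$, and the three points lie on the boundary of an $\bh$-line. Conversely, if $p_1,p_2,p_3$ lie on the boundary of a common $\bh$-line, after the same normalization this line must be $L_{\infty,0}$, forcing $v=0$ and therefore $\Re(\lambda)=0$ via the null condition, so that $\ba_{\bh}(\fp)=\pi/2$.

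The main obstacle, beyond the straightforward matrix computation, is justifying the normalizations: one must verify that ${\rm Sp}(n,1)$ acts doubly transitively on $\partial{\bf H}_{\bh}^n$ and that the stabilizer of $\{0,\infty\}$ contains ${\rm Sp}(n-1)$ acting on the middle coordinates. Both facts can be read off directly from the matrix form (\ref{hform}) by specializing to $\alpha=\beta=\zeta=\delta=0$, $b=c=0$, $A\in{\rm Sp}(n-1)$, and unit $a,d\in\bh$; transitivity of ${\rm Sp}(n-1)$ on spheres in $\bh^{n-1}$ then makes the real normalization in (i) available.
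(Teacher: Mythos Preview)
Your argument is correct. Note, however, that the paper does not actually supply a proof of this proposition: it simply records the result with the attribution ``The following properties are in \cite{apakim07}.'' So there is no in-paper proof to compare against; your write-up is a self-contained substitute for the citation.

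A couple of minor comments. In the forward direction of (i), after rotating $v$ to a real vector you should say explicitly that the three real lifts span a real $3$-subspace of $\bh^{n,1}$ on which the form has signature $(2,1)$ (this is immediate from the block form of $J$), so that its projectivized null set is indeed an $\br$-circle. In the converse of (i), rather than invoking an isometry to a ``real configuration,'' it is cleaner to observe directly that an $\br$-circle is, by definition, the boundary of a totally real plane, so one may choose lifts $\p_i$ in the underlying real $3$-space; the Hermitian products $\langle\p_i,\p_j\rangle$ are then real, hence so is $\langle\p_1,\p_2,\p_3\rangle$, and Proposition~\ref{prop-1.2} gives $\ba_{\bh}(\fp)=0$. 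This avoids appealing to transitivity of ${\rm Sp}(n,1)$ on $\br$-circles. With these small clarifications your proof is complete.
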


The quaternionic Cartan's angular invariant  enjoys the following geometric interpretation.  This geometric interpretation is a slight generalisation of Theorem 3.4 in \cite{apakim07}.

\begin{prop}\label{prop-angdist}
For distinct points $z, w\in \partial {\bf H}_{\bh}^n$ and $r\in \overline{{\bf H}_{\bh}^n}$,  let $\Pi:
{\bf H}_{\bh}^n\to L_{zw}$  and $\Theta:{\bf H}_{\bh}^n\to \gamma_{zw}$  be the orthogonal projections of the space
${\bf H}_{\bh}^n$ endowed with hyperbolic distance $\rho$. Then
\begin{itemize}
  \item[(i)] $$\Theta \Pi r=\Theta r, \ \  \tan \ba_{\bh}(z,w,r) = \sinh(\rho(\gamma_{zw},\Pi r)).$$
  \item[(ii)] If $r\in {\bf H}_{\bh}^n$,  then we  have  $$\cosh\big(\frac{\rho(\gamma_{zw},r)}{2}\big)=\cosh\big(\frac{\rho(L_{zw},r)}{2}\big)\cosh\big(\frac{\rho(\gamma_{zw},\Pi r)}{2}\big)$$  and $$\tan \ba_{\bh}(z,w,r)=\frac{2\sqrt{2} \cosh\big(\frac{\rho(\gamma_{zw},r)}{2}\big)\sqrt{\cosh\rho(\gamma_{zw},r)-\cosh\rho(L_{zw},r)}}{1+\cosh\rho(L_{zw},r) }.$$
\end{itemize}
\end{prop}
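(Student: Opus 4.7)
The plan is to use the $\mathrm{Sp}(n,1)$-invariance of every quantity in the statement to reduce to the model case $z=\infty$, $w=o$, and then carry out the computation explicitly. With the standard lifts and a general $r=(r_1,r_2,\ldots,r_n)^T\in\overline{{\bf H}_\bh^n}$, I write $r_1=-A+\beta$ with $A=-\Re(r_1)>0$ and $\beta=\Im(r_1)\in\Im(\bh)$, and abbreviate $C=\sum_{i=2}^n|r_i|^2$. Direct computation using the Hermitian form yields $\langle\hat{\bf z},\hat{\bf w}\rangle=-1$, $\langle\hat{\bf z},\hat{\bf r}\rangle=-1$, $\langle\hat{\bf r},\hat{\bf w}\rangle=r_1$, $\langle\hat{\bf r},\hat{\bf r}\rangle=-2A+C$.

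I will first identify the two projections. Minimizing $\cosh^2(\rho(r,q)/2)$ over $q=(q_1,0,\ldots,0)\in L_{zw}$ with $q_1=-B+\beta'$ separates into an imaginary piece (which selects $\beta'=\beta$) and a scalar minimization in $B$ (which selects $B=A$), so $\Pi r=(r_1,0,\ldots,0)$ and $\cosh^2(\rho(L_{zw},r)/2)=2A/(2A-C)$. On the other hand, (\ref{prjAB}) with $e^t=|\langle\hat{\bf w},\hat{\bf r}\rangle/\langle\hat{\bf z},\hat{\bf r}\rangle|=|r_1|$ gives $\Theta r=(-|r_1|,0,\ldots,0)$. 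Since $\Pi r$ has the same first-coordinate modulus, applying the formula to $\Pi r$ returns the same point, proving $\Theta\Pi r=\Theta r$.

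For the tangent formula in (i), the identity $\langle\p_1,\p_2,\p_3\rangle=\langle\p_2,\p_1\rangle\langle\p_3,\p_2\rangle\langle\p_1,\p_3\rangle$ collapses to $\langle\hat{\bf z},\hat{\bf w},\hat{\bf r}\rangle=r_1$, so $\cos\ba_\bh(z,w,r)=A/|r_1|$ and $\tan\ba_\bh(z,w,r)=|\beta|/A$. Applying (\ref{d-rgeo}) to $\Pi r$ (noting $\langle\widehat{\Pi r},\widehat{\Pi r}\rangle=-2A$) gives $\eta(z,w,\Pi r)=-r_1/(2A)$, whence $\cosh^2(\rho(\gamma_{zw},\Pi r)/2)=(|r_1|+A)/(2A)$ and therefore $\sinh^2\rho(\gamma_{zw},\Pi r)=(|r_1|^2-A^2)/A^2=|\beta|^2/A^2=\tan^2\ba_\bh(z,w,r)$, completing (i).

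For (ii), applying (\ref{d-rgeo}) to $r\in{\bf H}_\bh^n$ directly gives $\eta(z,w,r)=r_1/(C-2A)$ and hence $\cosh^2(\rho(\gamma_{zw},r)/2)=(|r_1|+A)/(2A-C)$, which equals the product $\cosh^2(\rho(L_{zw},r)/2)\cdot\cosh^2(\rho(\gamma_{zw},\Pi r)/2)$ computed above; taking square roots yields the first identity. Converting each $\cosh\rho$ via $\cosh\rho=2\cosh^2(\rho/2)-1$ gives $\cosh\rho(\gamma_{zw},r)-\cosh\rho(L_{zw},r)=2(|r_1|-A)/(2A-C)$ and $1+\cosh\rho(L_{zw},r)=4A/(2A-C)$; substituting these into the right-hand side of the second identity, the common factor $(2A-C)$ cancels and the expression collapses to $|\beta|/A=\tan\ba_\bh(z,w,r)$. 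The principal obstacle is bookkeeping under non-commutativity: one must verify that the minimization of $\cosh^2(\rho(r,q)/2)$ along $L_{zw}$ genuinely decouples into the imaginary match $\beta'=\beta$ and the scalar minimum $B=A$, and must track the ordering conventions in $\langle\cdot,\cdot,\cdot\rangle$ and $\eta$ so that all signs line up with the definition of $\ba_\bh$.
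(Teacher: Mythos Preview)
Your proof is correct and follows essentially the same route as the paper's: both reduce via the double transitivity of $\mathrm{Sp}(n,1)$ to the normalized pair $\{o,\infty\}$, identify $\Pi r=(r_1,0,\ldots,0)$ and $\Theta r=(-|r_1|,0,\ldots,0)$ via (\ref{prjAB}), and then read off all the asserted identities from Proposition~\ref{d-geo} together with $\langle\hat{\bf z},\hat{\bf w},\hat{\bf r}\rangle\sim r_1$. The only substantive difference is that you justify $\Pi r=(r_1,0,\ldots,0)$ by an explicit minimization of $\cosh^2(\rho(r,q)/2)$ over $q\in L_{zw}$, whereas the paper simply asserts this projection; your extra step is a welcome clarification and the decoupling into $\beta'=\beta$ and $B=A$ is indeed valid since $|\bar{q_1}+r_1|^2=(A+B)^2+|\beta-\beta'|^2$.
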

\begin{proof}   Since ${\rm Sp}(n,1)$ acts doubly transitively on $\partial {\bf H}_{\bh}^n$,  we may assume that $z=o, w=\infty, r=(r_1,\cdots,r_n)^T\in\overline{{\bf H}_{\bh}^n}$.  Then $$L_{zw}=\bp(\{\hat{\bf o}\lambda+ \hat{\bf \infty}\mu,  \lambda,\mu,\in \bh\})\cap \overline{{\bf H}_{\bh}^n}=\{(x,0,\cdots,0)^T: \Re(x)\leq 0 \}.$$
  Therefore $\Pi r=(r_1,0,\cdots,0)^T$.   It follows from (\ref{prjAB}) that
  $$\Theta \Pi r=\Theta r=(-|r_1|,0,\cdots,0).$$
  Since $\langle \hat{\bf o}, \hat{\bf \infty}, {\hat{\bf r}}\rangle=\overline{r_1}$,  we have  $\cos\ba_{\bh}(\fp)=\frac{-\Re(r_1)}{|r_1|}$ and therefore  \begin{equation}\label{add2}\tan \ba_{\bh}(z,w,r)=\frac{|\Im(r_1)|}{-\Re(r_1)}.\end{equation}   Noting that $\eta(o,\infty,\Pi r)=\frac{\overline{r_1}}{2\Re(r_1)}$, by Proposition \ref{d-geo} we get \begin{equation}\label{propeqg1}\cosh^2\big(\frac{\rho(\gamma_{zw},\Pi r)}{2}\big)=\frac{\Re(r_1)-|r_1|}{2\Re(r_1)}.\end{equation}
 Therefore
 \begin{equation}\label{add1}\sinh^2 (\rho(\gamma_{zw},\Pi r))=[2\cosh^2\big(\frac{\rho(\gamma_{zw},\Pi r)}{2}\big)-1]^2-1=\frac{|\Im(r_1)|^2}{\Re(r_1)^2}.\end{equation}
This concludes the proof of (i).

If $r\in {\bf H}_{\bh}^n$ then $2\Re(r_1)+\sum_{i=2}^{n}|r_i|^2<0$ and $$\eta(o,\infty,r)=\frac{\bar{r_1}}{2\Re(r_1)+\sum_{i=2}^{n}|r_i|^2}.$$
It follows from  Proposition \ref{d-geo}  that  \begin{equation}\cosh^2\big(\frac{\rho(\gamma_{zw},r)}{2}\big)=\frac{\Re(r_1)-|r_1|}{2\Re(r_1)+\sum_{i=2}^{n}|r_i|^2},\ \ \cosh^2\big(\frac{\rho(L_{zw},r)}{2}\big)=\frac{2\Re(r_1)}{2\Re(r_1)+\sum_{i=2}^{n}|r_i|^2}. \end{equation}
 Hence
\begin{equation}\label{propeqg2}\cosh\big(\frac{\rho(\gamma_{zw},r)}{2}\big)=\cosh\big(\frac{\rho(L_{zw},r)}{2}\big)\cosh\big(\frac{\rho(\gamma_{zw},\Pi r)}{2}\big). \end{equation}
 By (\ref{propeqg2}) we get
$$\sinh^2 (\rho(\gamma_{zw},\Pi r))=\frac{8\cosh^2\big(\frac{\rho(\gamma_{zw},r)}{2}\big)\big(\cosh\rho(\gamma_{zw},r)-\cosh\rho(L_{zw},r)\big)}{\big(1+\cosh\rho(L_{zw},r)\big)^2 }.$$
The above equation,  together with (\ref{add2}) and (\ref{add1}),  concludes the proof of (ii).
\end{proof}

The following proposition relates the cyclic product of quaternionic cross-ratios to the quaternionic Cartan's angular  invariant.  One can compare it with  analogous result of complex case in  \cite[p225]{gol99}.
\begin{prop}
 \begin{equation*}
\X(p_1, p_2, p_3, p_4)\X(p_1, p_4, p_2, p_3)\X(p_1, p_3, p_4,
p_2)=\exp(2\ba_{\bh}(p_2,p_3,p_4) {\bf J}),
 \end{equation*}
 where ${\bf J}$ is the imaginary part of $\X(p_1, p_2, p_3, p_4)\X(p_1, p_4, p_2, p_3)\X(p_1, p_3, p_4,
p_2)$. \end{prop}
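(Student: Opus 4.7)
The strategy is direct computation, leveraging the equivariance of cross-ratios under changes of lifts and under the action of ${\rm Sp}(n,1)$. Let $Q$ denote the left-hand side of the proposition. By equation (\ref{crat1sim}), any change of lifts $\hat\p_i \mapsto \hat\p_i \lambda_i$ conjugates each cross-ratio (and hence $Q$) by $\bar\lambda_1^{-1}$; the same holds under the action of any $h \in {\rm Sp}(n,1)$. Conjugation preserves $\Re(Q)$ and $|Q|$ and maps $\Im(Q) = {\bf J}$ consistently, while $\ba_\bh(p_2, p_3, p_4)$ is ${\rm Sp}(n,1)$-invariant. Thus the proposed identity is equivariant, and it suffices to verify it after any convenient normalization.

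Using the transitivity of ${\rm Sp}(n,1)$ on $\overline{{\bf H}_\bh^n}$, I would place $p_1 = \infty$ and take the standard lifts of the four points. A direct evaluation of the Hermitian form then gives $\langle \hat\p_i, \hat\p_1\rangle = -1$ for $i=2,3,4$. Writing $A = \langle \hat\p_2, \hat\p_3\rangle$, $B = \langle \hat\p_3, \hat\p_4\rangle$, $C = \langle \hat\p_4, \hat\p_2\rangle$, the $\langle \hat\p_i, \hat\p_1\rangle$ factors cancel in pairs and each cross-ratio collapses to a single product: $\X(p_1,p_2,p_3,p_4) = \bar A^{-1}C$, $\X(p_1,p_4,p_2,p_3) = \bar C^{-1}B$, and $\X(p_1,p_3,p_4,p_2) = \bar B^{-1}A$. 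Using the identity $\bar X^{-1} = X/|X|^2$ for nonzero $X \in \bh$ and collecting denominators, the product condenses to
$$Q = \frac{A C^2 B^2 A}{|A|^2 |B|^2 |C|^2}.$$

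By item (iv) of the preceding proposition we already know $|Q| = 1$, so the heart of the proof is to identify $\Re(Q)$ with $\cos(2\ba_\bh(p_2,p_3,p_4))$. Set $S = \langle \hat\p_2, \hat\p_3, \hat\p_4\rangle = \bar A \bar B \bar C$; then $|S|^2 = |A|^2|B|^2|C|^2$ and $\cos \ba_\bh = -\Re(S)/|S|$. The cyclic invariance $\Re(XY) = \Re(YX)$ gives $|S|^2\Re(Q) = \Re(AC^2B^2A) = \Re(A^2C^2B^2)$, reducing the task to the algebraic identity
$$\Re(A^2 C^2 B^2) = 2\Re(\bar A \bar B \bar C)^2 - |A|^2|B|^2|C|^2.$$
Once this is in hand, one obtains $\Re(Q) = 2\cos^2\ba_\bh - 1 = \cos(2\ba_\bh)$, and combining with $|Q|=1$ yields $Q = \cos(2\ba_\bh) + ({\bf J}/|{\bf J}|)\sin(2\ba_\bh) = \exp(2\ba_\bh(p_2,p_3,p_4){\bf J})$ under the convention that $\exp(\theta{\bf I})$ uses the unit imaginary direction of ${\bf J}$.

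The main obstacle is precisely this last identity: it does not hold for generic quaternions $A, B, C$, and must exploit the structural constraints coming from $A, B, C$ being Hermitian products $\langle \hat\p_i, \hat\p_j\rangle$ of lifts of points in $\overline{{\bf H}_\bh^n}$. I would verify it by expanding both sides in coordinates of the standard lifts and invoking the constraints $\langle \hat\p_i, \hat\p_i\rangle \leq 0$ (with equality for boundary points), which express $\Re((p_i)_1)$ in terms of the remaining coordinates; the resulting sums should then match term by term after a symmetric regrouping, completing the proof.
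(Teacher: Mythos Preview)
Your strategy coincides with the paper's. The paper writes out the product $Y$, asserts in one word (``Hence'') that
\[
\Re(Y)=\Re\Big(\frac{\langle \hat{\bf p}_2,\hat{\bf p}_3,\hat{\bf p}_4\rangle^{2}}{|\langle \hat{\bf p}_2,\hat{\bf p}_3,\hat{\bf p}_4\rangle|^{2}}\Big),
\]
and then observes that $(-\langle \hat{\bf p}_2,\hat{\bf p}_3,\hat{\bf p}_4\rangle)/|\langle \hat{\bf p}_2,\hat{\bf p}_3,\hat{\bf p}_4\rangle|=\exp(\ba_{\bh}{\bf I})$, so the square is $\exp(2\ba_{\bh}{\bf I})$ and $\Re(Y)=\cos 2\ba_{\bh}$; together with $|Y|=1$ this gives the statement. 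Your normalisation $p_1=\infty$ is a clean simplification the paper does not make, but after it your key identity $\Re(A^{2}C^{2}B^{2})=2\Re(\bar A\bar B\bar C)^{2}-|A|^{2}|B|^{2}|C|^{2}$ is exactly the paper's ``Hence'' step: with $S=\bar A\bar B\bar C$ one has $2\Re(S)^{2}-|S|^{2}=\Re(S^{2})$, so your identity reads $\Re(A^{2}C^{2}B^{2})=\Re(S^{2})$.

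You are right to flag this as the genuine obstacle, and your diagnosis that it fails for generic quaternions is correct (e.g.\ $A=i$, $B=j$, $C=k$ gives $\Re(A^{2}C^{2}B^{2})=-1$ but $\Re((CBA)^{2})=1$). The paper's ``Hence'' supplies no more justification than your sketch does; in the complex case the terms commute and the identity is immediate, but over $\bh$ it needs the null constraints on $\hat{\bf p}_2,\hat{\bf p}_3,\hat{\bf p}_4$. Your proposed verification by coordinate expansion is the honest route: with $p_1=\infty$, $p_2=o$ one gets $A=\overline{z_1^{(3)}}$, $C=z_1^{(4)}$, $B=\bar A+\bar C+\beta^{*}\alpha$, with $|\alpha|^{2}=-2\Re(A)$ and $|\beta|^{2}=-2\Re(C)$; the identity then becomes a concrete (if unpleasant) computation in the single quaternion $\beta^{*}\alpha$ subject to $|\beta^{*}\alpha|^{2}\le 4\Re(A)\Re(C)$. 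Carrying this out is the missing work in both your proposal and the paper's proof.
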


\begin{proof}  Let $Y=\X(p_1, p_2, p_3, p_4)\X(p_1, p_4, p_2, p_3)\X(p_1, p_3, p_4,
p_2)$.  Then $|Y|=1$ and
\begin{eqnarray*}Y&=&\langle{\hat\p_3},\,{\hat\p_1}\rangle
\langle{\hat\p_3},\,{\hat\p_2}\rangle^{-1} \langle{\hat\p_4},\,{\hat\p_2}\rangle \langle{\hat\p_4},\,{\hat\p_1}\rangle^{-1}  \langle{\hat\p_2},\,{\hat\p_1}\rangle \langle{\hat\p_2},\,{\hat\p_4}\rangle^{-1}
\langle{\hat\p_3},\,{\hat\p_4}\rangle \langle{\hat\p_3},\,{\hat\p_1}\rangle^{-1} \\
& & \langle{\hat\p_4},\,{\hat\p_1}\rangle \langle{\hat\p_4},\,{\hat\p_3}\rangle^{-1} \langle{\hat\p_2},\,{\hat\p_3}\rangle \langle{\hat\p_2},\,{\hat\p_1}\rangle^{-1}.\end{eqnarray*}
 Hence  \begin{equation*}
\Re(Y)=\Re\Big(\frac{\langle {\hat\p_2}, {\hat\p_3}, {\hat\p_4}\rangle^2}{|\langle {\hat\p_2}, {\hat\p_3}, {\hat\p_4}\rangle|^2}\Big).
 \end{equation*}
 Noting that
\begin{equation*} \frac{-\langle {\hat\p_2}, {\hat\p_3}, {\hat\p_4}\rangle}{|\langle {\hat\p_2}, {\hat\p_3}, {\hat\p_4}\rangle|} =
\exp(\ba_{\bh}(p_2,p_3,p_4) {\bf I}),\,\,  {\bf I}=\frac{-\Im(\langle
{\hat\p_2}, {\hat\p_3}, {\hat\p_4}\rangle)}{|\Im(\langle {\hat\p_2},
{\hat\p_3}, {\hat\p_4}\rangle)|},
\end{equation*}
we have
\begin{equation*} \frac{\langle {\hat\p_2}, {\hat\p_3}, {\hat\p_4}\rangle^2}{|\langle {\hat\p_2}, {\hat\p_3}, {\hat\p_4}\rangle|^2} =
\exp(2\ba_{\bh}(p_2,p_3,p_4) {\bf I}).
\end{equation*}
\end{proof}

\section{The  congruence class of  triple  of pairwise distinct points  of  $\overline{{\bf H}_{\bh}^n}$}\label{cong-trip}
 Denote by $$ G_{o,\infty}=\{g\in {\rm Sp}(n,1):g(o)=o, \  g(\infty)=\infty\}.$$  It follows from  (\ref{hform}) that  each $h\in G_{o,\infty}$ is of the form $$h={\rm diag}(\mu, A, \bar{\mu}^{-1}),$$ where $A\in {\rm Sp}(n-1)$.   Moveover, if $|\mu|=1$ then $h$ is a boundary elliptic element fixing  the geodesic $\gamma_{o\infty}$ pointwise.

We  need the following  lemma to prove  Theorem \ref{thm-triple}.

\begin{lem}\label{lem-map}
Let $z=(z_1,\cdots,z_n)^T$ and $w=(w_1,\cdots,w_n)^T$ be points of $\overline{{\bf H}_{\bh}^n}.$ Then

\begin{itemize}
  \item[(i)]  there exists  an $h\in G_{o,\infty}$ such that $w=h(z)$ if and only if this exists a  $\kappa>0$ such that \begin{equation}\label{condm}\Re(w_1)=\kappa^2\Re(z_1),\ \  |w_1|=\kappa^2|z_1|, \ \ \sum_{i=2}^{n}|w_i|^2=\kappa^2\sum_{i=2}^{n}|z_i|^2;\end{equation}
  \item[(ii)] there exists an elliptic element $h$ fixing  the geodesic $\gamma_{o\infty}$ pointwise  such that $w=h(z)$ if and only if \begin{equation}\label{condme}\Re(w_1)=\Re(z_1),\ \  |w_1|=|z_1|, \ \ \sum_{i=2}^{n}|w_i|^2=\sum_{i=2}^{n}|z_i|^2.\end{equation}
  \end{itemize}
\end{lem}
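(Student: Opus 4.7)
The plan is to work in two directions for each part. First, I would parametrize $h\in G_{o,\infty}$ explicitly as $h=\mathrm{diag}(\mu,A,\bar\mu^{-1})$ with $A\in\mathrm{Sp}(n-1)$ and $\mu\in\bh\setminus\{0\}$, then compute its action on the standard lift $\hat z=(z_1,z_2,\ldots,z_n,1)^T$. Applying $h$ gives $(\mu z_1,Az',\bar\mu^{-1})^T$ with $z'=(z_2,\ldots,z_n)^T$. Projecting via right scalar multiplication by $\bar\mu$ yields $h(z)=(\mu z_1\bar\mu,\ Az'\cdot\bar\mu)^T$. Setting $\kappa=|\mu|$, I would read off the three quantities: $|w_1|=|\mu z_1\bar\mu|=\kappa^2|z_1|$; $\Re(w_1)=\Re(\mu z_1\bar\mu)=\kappa^2\Re(\mu z_1\mu^{-1})=\kappa^2\Re(z_1)$ since conjugation preserves the real part; and $\sum_{i=2}^n|w_i|^2=|\bar\mu|^2\|Az'\|^2=\kappa^2\|z'\|^2$ because $A\in\mathrm{Sp}(n-1)$ is an isometry of the standard Hermitian form on $\bh^{n-1}$. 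This establishes the forward implication of (i), and specializing to $|\mu|=1$ (which I will show below characterizes the boundary elliptic case) gives the forward implication of (ii).

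For the converse of (i), given $\kappa>0$ with the three equalities, I would first use the similarity criterion for quaternions recalled in Section~2: two quaternions are similar if and only if they share the same real part and modulus. Since $\Re(w_1/\kappa^2)=\Re(z_1)$ and $|w_1/\kappa^2|=|z_1|$, there is a unit quaternion $\nu$ with $\nu z_1\nu^{-1}=w_1/\kappa^2$; setting $\mu=\kappa\nu$ gives $|\mu|=\kappa$ and $\mu z_1\bar\mu=\kappa^2\nu z_1\nu^{-1}=w_1$. Next I would define $v:=w'\bar\mu^{-1}\in\bh^{n-1}$, which by the third equality satisfies $\|v\|^2=\kappa^{-2}\sum|w_i|^2=\|z'\|^2$. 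Since $\mathrm{Sp}(n-1)$ acts transitively on spheres of fixed radius in $(\bh^{n-1},\|\cdot\|)$, one finds $A\in\mathrm{Sp}(n-1)$ with $Az'=v$ (the degenerate case $z'=0$ forces $v=0$ and any $A$ works). Then $h=\mathrm{diag}(\mu,A,\bar\mu^{-1})$ lies in $G_{o,\infty}$ and sends $z$ to $w$. Part (ii) follows from the same construction with $\kappa=1$, once I verify the elliptic-fixing claim.

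For the elliptic characterization in (ii), I would parametrize the geodesic $\gamma_{o\infty}$ as $\{(-t,0,\ldots,0)^T:t>0\}$ and apply $h=\mathrm{diag}(\mu,A,\bar\mu^{-1})$ with $|\mu|=1$ to its lift $(-t,0,\ldots,0,1)^T$: the image is $(-\mu t,0,\ldots,0,\bar\mu^{-1})^T=(-\mu t,0,\ldots,0,\mu)^T$, which projects to $(-\mu t\,\mu^{-1},0,\ldots,0)^T=(-t,0,\ldots,0)^T$, so $\gamma_{o\infty}$ is fixed pointwise; conversely, if $h\in G_{o,\infty}$ fixes the midpoint $(-1,0,\ldots,0)^T$ of $\gamma_{o\infty}$ then $\mu\cdot 1\cdot\bar\mu=1$, forcing $|\mu|=1$, which is exactly $\kappa=1$ in the computation above.

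The only real obstacle is bookkeeping with right-scalar projection and quaternionic non-commutativity, and making sure the similarity step $z_1\sim w_1/\kappa^2$ is handled even when $z_1=0$ (in which case $w_1=0$ and one takes $\nu=1$) and the $\mathrm{Sp}(n-1)$-extension step is handled when $z'=0$. None of these cases is genuinely hard, but they need to be dispatched explicitly so that the constructed $\mu$ and $A$ are well defined across all configurations of $z,w\in\overline{{\bf H}_\bh^n}$.
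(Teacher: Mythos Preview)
Your proposal is correct and follows essentially the same route as the paper: parametrize $h\in G_{o,\infty}$ as $\mathrm{diag}(\mu,A,\bar\mu^{-1})$, compare $h\hat{\bf z}$ with $\hat{\bf w}\bar\mu^{-1}$ to read off the necessity, and for sufficiency use quaternionic similarity to produce a unit $\nu$ (the paper's $\lambda$) with $\nu z_1\nu^{-1}=w_1/\kappa^2$, then set $\mu=\kappa\nu$ and pick $A\in\mathrm{Sp}(n-1)$ by transitivity on spheres. Your explicit verification that $|\mu|=1$ characterizes the boundary-elliptic elements fixing $\gamma_{o\infty}$ pointwise, and your remarks on the degenerate cases $z_1=0$ and $z'=0$, are useful additions that the paper either states without proof or leaves implicit.
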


\begin{proof}  We  first consider  Case (i).  Let $h={\rm diag}(\mu, A, \bar{\mu}^{-1})$, where $A\in {\rm Sp}(n-1)$.  It follows from $w=h(z)$ that
 $h{\hat{\bf z}}={\hat{\bf w}}\bar{\mu}^{-1}$, that is,
 $$ \mu z_1=w_1 \bar{\mu}^{-1}, A(z_2,\cdots,z_n)^T=(w_2,\cdots,w_n)^T \bar{\mu}^{-1}.$$
 Therefore the condition (\ref{condm}) holds.

 For  sufficiency,  if the condition (\ref{condm}) holds then there exist a unit quaternion $\lambda$ and  an  $A\in {\rm Sp}(n-1)$  such that
\begin{equation*}
\lambda(\kappa^2z_1)\lambda^{-1}=w_1,\,\, A(z_2,\cdots,z_n)^T=(w_2,\cdots,w_n)^T\frac{\lambda}{\kappa}.
\end{equation*}
Therefore $h={\rm diag}(\kappa \lambda, A, \frac{\lambda}{\kappa})\in  G_{o,\infty}$ is the desired isometry mapping $z$ to $w$.

Similarly we can prove  Case (ii).
\end{proof}

\medskip

We now consider the congruence class of  triple  of pairwise distinct points  of  $\overline{{\bf H}_{\bh}^n}$.

\medskip

\noindent {\it Proof  of Theorem \ref{thm-triple}.}\quad   The  necessity is obvious.    We mention that the geometric  invariants in Theorem \ref{thm-triple} are invariant under the conjugation by  elements of ${\rm Sp}(n,1)$.  For  sufficiency,  we can choose $f,g\in {\rm Sp}(n,1)$  to map the geodesics connecting $p_1$ and $p_2$, $q_1$ and $q_2$  to $\gamma_{o\infty}$, respectively. By our condition, we can further assume that $f(p_1)=g(q_1)$ and $f(p_2)=g(q_2)$.  Then we need to find an $h\in {\rm Sp}(n,1)$ which fixes $f(p_1)$ and $f(p_2)$ and  maps $f(p_3)$ to $g(q_3)$.

 \medskip

 Let $p_3=(r_1,\cdots,r_n)^T,  q_3=(z_1,\cdots,z_n)^T\in \overline{{\bf H}_{\bh}^n}.$   By the above normalisation, we only need to consider the following  four  specific cases.

 \medskip

 Case (i) and  Case (ii):  $\fp=(o,\infty,p_3)$ and $\fq=(o,\infty, q_3)$, $p_3, q_3\in \overline{{\bf H}_{\bh}^n}.$

 Since $\langle \hat{\bf o}, \hat{\bf \infty}, {\hat\p_3}\rangle=\overline{r_1}$ and $\ba_{\bh}(\fp)=\ba_{\bh}(\fq)$, we get \begin{equation}\label{c1A}\frac{\Re(r_1)}{|r_1|}=\frac{\Re(z_1)}{|z_1|}.\end{equation}
For Case (i), that is  $p_3,q_3\in \partial {\bf H}_{\bh}^n$, we have   $-2\Re(r_1)=\sum_{i=2}^{n}|r_i|^2$ and $-2\Re(z_1)=\sum_{i=2}^{n}|z_i|^2$.
 Therefore the condition (\ref{condm}) holds.

For Case (ii), that is  $p_3,q_3\in {\bf H}_{\bh}^n$, we have  $-2\Re(r_1)>\sum_{i=2}^{n}|r_i|^2$ and $-2\Re(z_1)>\sum_{i=2}^{n}|z_i|^2$.  Since $$\eta(o,\infty,p_3)=\frac{\bar{r_1}}{2\Re(r_1)+\sum_{i=2}^{n}|r_i|^2}$$
and $\rho(L_{o\infty},p_3) =\rho(L_{o\infty},q_3)$, by Proposition \ref{d-geo} we have
\begin{equation}\label{c1p}\frac{2\Re(r_1)}{2\Re(r_1)+\sum_{i=2}^{n}|r_i|^2}=\frac{2\Re(z_1)}{2\Re(z_1)+\sum_{i=2}^{n}|z_i|^2}. \end{equation}
 Thus the condition (\ref{condm}) holds.  Lemma \ref{lem-map}  concludes the proof of Cases (i) and (ii).

\vspace{2mm}

\noindent {\bf Remark 4.1.} \   In Case (ii), by Proposition \ref{prop-angdist} we can use  $\rho(\gamma_{p_1p_2},p_3)=\rho(\gamma_{q_1q_2},q_3)$  to replace the condition $\rho(L_{p_1p_2},p_3)=\rho(L_{q_1q_2},q_3).$   Let  $\fp=(o,\infty,p_3)$ and $\fq=(o,\infty, q_3)$, where $p_3=(-1,\frac{\sqrt{6}}{2})\in  {\bf H}_{\bc}^2$ and $q_3=(-1,\sqrt{2}{\bf i})\in \partial {\bf H}_{\bc}^2$. Though $\ba_{\bh}(\fp)=\ba_{\bh}(\fq)=0$, we can not map $p_3$ to $q_3$ by any  $f\in  {\rm Sp}(n,1)$. This implies the condition $\rho(L_{p_1p_2},p_3)=\rho(L_{q_1q_2},q_3)$ is necessary.

 \medskip

  Case (iii):  $p_1\in \partial {\bf H}_{\bh}^n$, $p_2, p_3\in {\bf H}_{\bh}^n$.

Since ${\rm Sp}(n,1)$  acts transitively on  $\partial {\bf H}_{\bh}^n\times {\bf H}_{\bh}^n$, we may assume that $p_1=q_1=o$ and  $p_2=q_2=(-1,0,\cdots,0)^T=\bp( \hat{\bf \infty}+ \hat{\bf o})$. In this case the other end point of the  geodesic connecting $o$ and $p_2$ is $\infty$.
It follows from $\rho(L_{o\infty},p_3)=\rho(L_{o\infty},q_3)$ that
\begin{equation}\label{c3L}\frac{\Re(r_1)}{2\Re(r_1)+\sum_{i=2}^{n}|r_i|^2}=\frac{\Re(z_1)}{2\Re(z_1)+\sum_{i=2}^{n}|z_i|^2}. \end{equation}
By $\rho(p_2,p_3)=\rho(q_2,q_3)$ we get
\begin{equation}\label{c3d}\frac{|r_1-1|^2}{2\Re(r_1)+\sum_{i=2}^{n}|r_i|^2}=\frac{|z_1-1|^2}{2\Re(z_1)+\sum_{i=2}^{n}|z_i|^2}.  \end{equation}
It follows  from  (\ref{c3L}) and (\ref{c3d}) that
\begin{equation}\label{cc1} \frac{\sum_{i=2}^{n}|r_i|^2}{\Re(r_1)}=\frac{\sum_{i=2}^{n}|z_i|^2}{\Re(z_1)}, \frac{\Re(r_1)}{|1-r_1|^2}=\frac{\Re(z_1)}{|1-z_1|^2}, \ \  \frac{1+|r_1|^2}{\Re(r_1)}=\frac{1+|z_1|^2}{\Re(z_1)}.  \end{equation}
 By Proposition \ref{lem-dist} we have
$$\cosh^2\big(\frac{\rho(L_{or},p_2)}{2}\big)=1+\frac{\sum_{i=2}^{n}|r_i|^2}{2|r_1|^2}, \  \cosh^2\big(\frac{\rho(L_{oq_3},q_2)}{2}\big)=1+\frac{\sum_{i=2}^{n}|z_i|^2}{2|z_1|^2}.$$
Hence
\begin{equation}\label{cc2} \frac{\sum_{i=2}^{n}|r_i|^2}{|r_1|^2}=\frac{\sum_{i=2}^{n}|z_i|^2}{|z_1|^2}.\end{equation}
By (\ref{cc1}) and (\ref{cc2}) we get $\Re(r_1)=\Re(z_1), |z_1|=|r_1|, \ \ \sum_{i=2}^{n}|z_i|^2=\sum_{i=2}^{n}|r_i|^2.$
Lemma \ref{lem-map}  concludes the proof of Case (iii).

\vspace{2mm}

\noindent {\bf Remark 4.2.}  \  Since $\langle \hat{\bf o}, \hat{\bf o}+ \hat{\bf \infty}, {\hat\p_3}\rangle=\overline{r_1}-|r_1|^2$, the condition $\ba_{\bh}(\fp)=\ba_{\bh}(\fq)$ implies that \begin{equation}\label{c3A}\frac{\Re(r_1)-|r_1|^2}{|r_1||1-r_1|}=\frac{\Re(z_1)-|z_1|^2}{|z_1||1-z_1|}.\end{equation}
Let $p_3=(-1+{\bf i},\sqrt{6}/2),q_3=(-2/5+{\bf i}/5, \sqrt{15}/5)$.  Then the conditions (\ref{c3L}),(\ref{c3d}) and (\ref{c3A}) hold for $\fp=(o,\bp( \hat{\bf \infty}+ \hat{\bf o}),p_3)$ and $\fq=(o,\bp( \hat{\bf \infty}+ \hat{\bf o}), q_3)$. If there exist an  $h\in {\rm Sp}(2,1)$  mapping $\fp$ to  $\fq$ then $h$ fixes $o$ and $\bp( \hat{\bf \infty}+ \hat{\bf o})$.  This implies that $h$ is a boundary elliptic element fixing the geodesic $\gamma_{o\infty}$ pointwise. However Lemma \ref{lem-map}(ii) indicates that there exist not such an  $h\in {\rm Sp}(2,1)$ for $-1\neq -2/5$. This example shows that the quaternionic Cartan's angular  invariant is not a good candidate for congruence class in this case.

  \medskip

   Case (iv): $p_1,p_2, p_3\in {\bf H}_{\bh}^n$

   Mapping the  geodesic $\gamma_{p_1p_2}$ and $\gamma_{q_1q_2}$  by $h_1, h_2 \in  {\rm Sp}(n,1)$ to the geodesic $\gamma_{o \infty}$, we may assume that
   $$p_1=q_1=\bp( \hat{\bf \infty}+ \hat{\bf o}),\ \ p_2=q_2=\bp( e^{\frac{t}{2}}\hat{\bf \infty}+ e^{-\frac{t}{2}}\hat{\bf o})=(-e^t,0,\cdots,0),$$
   where $t=\rho(p_1,p_2)$.

Since $$\langle \hat{\bf \infty}+ \hat{\bf o},e^{\frac{t}{2}}\hat{\bf \infty}+ e^{-\frac{t}{2}}\hat{\bf o}, {\hat\p_3}\rangle=(e^{\frac{t}{2}}+e^{-\frac{t}{2}})(-e^{\frac{t}{2}}+e^{-\frac{t}{2}}r_1)(1-\overline{r_1}),$$  the condition  $\ba_{\bh}(\fp)=\ba_{\bh}(\fq)$  implies  \begin{equation}\label{c4A}\frac{\Re[(r_1-e^t)(1-\bar{r_1})]}{|(r_1-e^t)(1-\bar{r_1})|}=\frac{\Re[(z_1-e^t)(1-\bar{z_1})]}{|(z_1-e^t)(1-\bar{z_1})|},\end{equation}
that is
\begin{equation}\label{c4A1}\frac{(e^t+1)\Re(r_1)-|r_1|^2-e^t}{|(r_1-e^t)(1-\bar{r_1})|}=\frac{(e^t+1)\Re(z_1)-|z_1|^2-e^t}{|(z_1-e^t)(1-\bar{z_1})|}.\end{equation}
It follows from  $\rho(p_1,p_3)=\rho(q_1,q_3)$ that
\begin{equation}\label{c4d}\frac{|r_1-1|^2}{2\Re(r_1)+\sum_{i=2}^{n}|r_i|^2}=\frac{|z_1-1|^2}{2\Re(z_1)+\sum_{i=2}^{n}|z_i|^2}.  \end{equation}
 Similarly,  $\rho(p_2,p_3)=\rho(q_2,q_3)$ implies that
\begin{equation}\label{c4d2}\frac{|r_1-e^t|^2}{2\Re(r_1)+\sum_{i=2}^{n}|r_i|^2}=\frac{|z_1-e^t|^2}{2\Re(z_1)+\sum_{i=2}^{n}|z_i|^2}.  \end{equation}
From equations (\ref{c4A1})-(\ref{c4d2}) we obtain that
\begin{equation}\label{c4A2}\frac{(e^t+1)\Re(r_1)-|r_1|^2-e^t}{|1-r_1|^2}=\frac{(e^t+1)\Re(z_1)-|z_1|^2-e^t}{|1-z_1|^2}.\end{equation}
It follows from  the above formula that
\begin{equation}\label{c4A3}\frac{2\Re(r_1)-1}{|r_1|^2}=\frac{2\Re(z_1)-1}{|z_1|^2}.\end{equation}
By (\ref{c4d}) and (\ref{c4d2}) we obtain that $$\frac{|r_1-1|^2}{|r_1-e^t|^2}=\frac{|z_1-1|^2}{|z_1-e^t|^2}.$$  We  can rephrase the above equation as
\begin{equation}\label{c4A4}2|z_1|^2\Re(r_1)-(e^t+1)|z_1|^2+2e^t\Re(z_1)=2|r_1|^2\Re(z_1)-(e^t+1)|r_1|^2+2e^t\Re(r_1).\end{equation}
Substituting (\ref{c4A3}) into (\ref{c4A4}) we obtain
\begin{equation}\label{c4A5}|r_1|^2-2\Re(r_1)=|z_1|^2-2\Re(z_1).\end{equation}
Thus \begin{equation}\label{c4A6}|r_1|^2\Bigr(1-\frac{2\Re(r_1)-1}{|r_1|^2}\Bigl)=|z_1|^2\Bigr(1-\frac{2\Re(z_1)-1}{|z_1|^2}\Bigl).\end{equation}
This implies $|z_1|=|r_1|$ and therefore  $\Re(r_1)=\Re(z_1),   \ \sum_{i=2}^{n}|z_i|^2=\sum_{i=2}^{n}|r_i|^2.$  Lemma \ref{lem-map}  concludes the proof of
Case (iv).

\vspace{2mm}

\noindent {\bf Remark 4.3.} \  In Case (iv), since $p_1$ and $p_2$ lie in  the geodesic $\gamma_{o\infty}$,  we have $L_{o\infty}=L_{p_1p_2}$.  It follows from $\rho(L_{o\infty},r)=\rho(L_{o\infty},q_3)$ that
\begin{equation}\label{c4p}\frac{\Re(r_1)}{2\Re(r_1)+\sum_{i=2}^{n}|r_i|^2}=\frac{\Re(z_1)}{2\Re(z_1)+\sum_{i=2}^{n}|z_i|^2}.  \end{equation}
The equations (\ref{c4d}),  (\ref{c4d2})  and  (\ref{c4p}) imply that the condition (\ref{condme}) holds. This observation implies that we can replace the condition  $\ba_{\bh}(\fp)=\ba_{\bh}(\fq)$ by $\rho(L_{p_1p_3},p_2)=\rho(L_{q_1q_3},q_2)$ in Case (iv) of Theorem \ref{thm-triple}.
\hfill$\square$

\section{The moduli space of quadruples of pairwise distinct points  of  $\partial {\bf H}_{\bh}^n$}\label{sect-moduli}

\subsection{The Gram matrix}

\quad \ Given a quadruple $\mathfrak{p}=(p_1,p_2,p_3,p_4)$  in $\partial {\bf H}_{\bh}^n$ with lift  ${\bf p}=({\bf p}_{1},{\bf p}_{2},{\bf p}_{3}, {\bf p}_{4})$.
The following  Hermitian  matrix
$$G=G({\bf p})=(g_{ij})=( \langle {\bf p}_{i},{\bf p}_{j}\rangle)$$
is called the  {\it Gram matrix}  associated to $\mathfrak{p}$.
It is obvious that \begin{equation}\label{isomap}G({\bf p})=G(f{\bf p})=G(f{\bf p}_{1},f{\bf p}_{2},f{\bf p}_{3}, f{\bf p}_{4}),\  f\in {\rm PSp}(n,1).\end{equation}
Let  $D={\rm diag}(\lambda_1,\lambda_2,\lambda_3,\lambda_4)$ and $\tilde{{\bf p}}= ({\bf p}_{1}\lambda_1,{\bf p}_{2}\lambda_2,{\bf p}_{3}\lambda_3, {\bf p}_{4}\lambda_4)$.  Then  $$\tilde{G}=G(\tilde{{\bf p}})=( \langle {\bf p}_{i}\lambda_i,{\bf p}_{j}\lambda_j\rangle)=(\bar{\lambda_j}\langle {\bf p}_{i},{\bf p}_{j}\rangle\lambda_i)$$ and
$$\overline{\tilde{G}}=D^*\overline{G}D,$$
where $\overline{G}=(\bar{g_{ij}})$ is the conjugate of $G$.

We say that two Hermitian matrices $H$ and $\tilde{H}$ are equivalent if there
exists a diagonal matrix
 $$D={\rm diag}(\lambda_1,\lambda_2,\lambda_3, \lambda_4),\  \lambda_i\in \bh\setminus \{0\}$$ such that $$\tilde{H} = D^*HD.$$
 Thus, to each quadruple  ${\bf p}$ of points in $V_0$ is associated an equivalence class
of Hermitian matrices with zeros on the diagonal.

\begin{prop}\label{normprocess}
Let  $\mathfrak{p}=(p_1,p_2,p_3,p_4)$ be a quadruple of pairwise distinct points in $\partial {\bf H}_{\bh}^n$.  Then the equivalence class of Gram matrices associated to $\mathfrak{p}$ contains a unique  matrix $G= (g_{ij})$ with $$g_{ii} = 0,i=1,\cdots 4,\   g_{12}=g_{23}=g_{34}=1,$$ and
  $$g_{13}=-e^{{\bf i}\ba},\ g_{14}=c_1+t{\bf j},\ g_{24}=c_2+c_3{\bf j},$$  where $\ba=\ba_{\bh}((p_1,p_2,p_3))$, $t\geq 0$  and $c_i\in \bc, i=1,2,3$.
  Furthermore $c_3\geq 0$ provided $t=0$.
\end{prop}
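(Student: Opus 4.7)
The argument splits naturally into existence and uniqueness of the normal form; the construction in equations (\ref{flam2-4})--(\ref{findrep}) provides the existence directly.

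For existence, I would start from any lift ${\bf p}=({\bf p}_1,{\bf p}_2,{\bf p}_3,{\bf p}_4)$ and apply the scaling ${\bf p}_i\mapsto{\bf p}_i\lambda_i$ with the $\lambda_i$ defined in (\ref{flam2-4}) and (\ref{flam1}). A routine computation using $\langle{\bf p}_i\lambda_i,{\bf p}_j\lambda_j\rangle=\bar\lambda_j g_{ij}\lambda_i$ shows that the formulas for $\lambda_2,\lambda_3,\lambda_4$ are tailored to produce $g'_{12}=g'_{23}=g'_{34}=1$. The construction of $\lambda_1$ in (\ref{flam1}), together with the key property $\nu(a)^{-1}a\,\nu(a)=a_0+\sqrt{a_1^2+a_2^2+a_3^2}\,{\bf i}\in\bc$ and the dividing factor $\sqrt{|\cdot|}$, forces $g'_{13}$ to be a unit complex number; the sign $-e^{{\bf i}\ba}$ with $\ba\in[0,\pi/2]$ is guaranteed by $\Re(\langle{\bf p}_1,{\bf p}_2,{\bf p}_3\rangle)\le 0$ of Proposition \ref{prop-1.2}, which propagates through the scalings to give $\Re(g'_{13})\le 0$. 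Finally, conjugation by the unit quaternion $\mu$ of (\ref{fmu}) preserves all of $g'_{12},g'_{23},g'_{34},g'_{13}$, since $\mu$ is chosen to be a unit complex number that commutes with $e^{{\bf i}\ba}$, while the key property of $\sigma$ brings $g'_{14}$ into $\bc\oplus\br{\bf j}$ with nonnegative ${\bf j}$-coefficient, switching to $g'_{24}$ in the degenerate branch when the ${\bf j}$-part of $g'_{14}$ vanishes.

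For uniqueness, suppose $G$ and $\tilde G=D^*GD$ are both in the claimed normal form, where $D={\rm diag}(\lambda_1,\lambda_2,\lambda_3,\lambda_4)$. The equalities $\tilde g_{12}=\tilde g_{23}=\tilde g_{34}=1$ force successively $\lambda_2=\bar\lambda_1^{-1}$, $\lambda_3=\lambda_1$, $\lambda_4=\bar\lambda_1^{-1}$. The equation $\bar\lambda_1(-e^{{\bf i}\ba})\lambda_1=-e^{{\bf i}\tilde\ba}$ yields $|\lambda_1|=1$, and writing $\lambda_1=a+b{\bf j}$ with $a,b\in\bc$, the ${\bf j}$-component of $\bar\lambda_1 e^{{\bf i}\ba}\lambda_1$ is proportional to $\sin\ba\cdot\bar ab$; for $\ba\in(0,\pi/2)$ this forces $\lambda_1$ to be a unit complex number (the alternative $a=0$ would map $e^{{\bf i}\ba}$ to $e^{-{\bf i}\ba}$, contradicting $\tilde\ba\in[0,\pi/2]$). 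Conjugation by such a $\lambda_1$ fixes the complex parts of $g_{14},g_{24}$ and multiplies each ${\bf j}$-coefficient by $\bar\lambda_1^2$; the rules $t\ge 0$ and $c_3\ge 0$ when $t=0$ then force $\bar\lambda_1^2$ to be a nonnegative real, so $\lambda_1=\pm 1$ and $\tilde G=G$.

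The main obstacle will be the uniqueness step at the corners of the parameter space, namely $\ba\in\{0,\pi/2\}$, where the centralizer of $-e^{{\bf i}\ba}$ in the unit quaternions is much larger than the complex unit circle, and the degenerate configurations $t=0$ or $c_3=0$. In those cases the residual freedom of $\lambda_1$ left after fixing $g_{13}$ must be eliminated by combining the constraints on $g_{14}$ and $g_{24}$ simultaneously. The conditions $|c_1|^2+t^2\ne 0$ and $|c_2|^2+|c_3|^2\ne 0$ built into the definition of $\bm(n)$ are precisely what guarantee that at least one of these two entries is nontrivial so that the stabilizer of the normalized matrix reduces to $\{\pm I\}$; a careful case analysis at these boundaries forms the technical heart of the argument.
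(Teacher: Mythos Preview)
Your existence argument is exactly the paper's: it carries out the three-step rescaling of (\ref{flam2-4})--(\ref{findrep}) and checks that each step achieves the required normalization, with $\sigma$ in step three removing the residual $e^{{\bf i}\theta}$-freedom noted after (\ref{flam1}).

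On uniqueness you actually go further than the paper, which only remarks that ``$\sigma(a,b)$ eliminates this indeterminacy'' without justifying that the indeterminacy after step two is precisely the circle $\lambda_1\mapsto\lambda_1 e^{{\bf i}\theta}$. Your argument via $\tilde g_{12}=\tilde g_{23}=\tilde g_{34}=1\Rightarrow(\lambda_2,\lambda_3,\lambda_4)=(\bar\lambda_1^{-1},\lambda_1,\bar\lambda_1^{-1})$ and the ${\bf j}$-component analysis of $\bar\lambda_1 e^{{\bf i}\ba}\lambda_1$ supplies this missing step cleanly for $\ba\in(0,\pi/2]$. One small correction: $\ba=\pi/2$ is \emph{not} a boundary case in your sense, since the centralizer of $-e^{{\bf i}\pi/2}=-{\bf i}$ in ${\rm Sp}(1)$ is already just the complex unit circle; your own computation shows $\sin\ba\neq 0$ suffices. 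The only genuinely delicate corner is $\ba=0$, where $g_{13}=-1$ is central and the residual freedom is all of ${\rm Sp}(1)$; there the constraints on $g_{14},g_{24}$ together with the geometric restrictions coming from $D(G)\le 0$ must do all the work, and you are right to flag this as the technical heart.
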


\begin{proof}  Let ${\bf p}=({\bf p}_{1},{\bf p}_{2},{\bf p}_{3}, {\bf p}_{4})$ be an arbitrary lift of $\mathfrak{p}$.   We want to obtain a lift $\n$ of $\mathfrak{p}$ by rescaling ${\bf p}$ such that $G(\bf n)$  is the desired Gram matrix.

Note that $\langle {\bf p}_{i}, {\bf p}_{j}\rangle\neq 0$ for $i\neq j$.   Firstly we  obtain  $\lambda_i, i=2,3,4$  as the solutions of the following equations:
\begin{equation}\label{findlambda} \langle {\bf p}_{1}, \ {\bf p}_{2}\lambda_2\rangle=1,\ \langle {\bf p}_{2} \lambda_2, \ {\bf p}_{3}\lambda_3\rangle=1, \  \langle {\bf p}_{3}\lambda_3, {\bf p}_{4}\lambda_4\rangle =1. \end{equation} These solutions  $\lambda_i, i=2,3,4$ are given by (\ref{flam2-4}) in terms of $\p_i, i=1,2,3,4$.
Now the Gram matrix  $G$ of $({\bf p}_{1},\  {\bf p}_{2}\lambda_2,\  {\bf p}_{3}\lambda_3,\  {\bf p}_{4}\lambda_4)$ satisfies $g_{ii} = 0,i=1,\cdots 4,\   g_{12}=g_{23}=g_{34}=1$.

Secondly, we want to find $\lambda_1$ such that the Gram matrix  $G=(g_{ij})$ of $$({\bf p}_{1}\lambda_1,\  {\bf p}_{2}\lambda_2\bar{\lambda_1}^{-1},\  {\bf p}_{3}\lambda_3 \lambda_1,\  {\bf p}_{4}\lambda_4 \bar{\lambda_1}^{-1})$$ satisfies $g_{ii} = 0,\ i=1,\cdots 4,\   g_{12}=g_{23}=g_{34}=1,\  g_{13}=-e^{{\bf i}\ba}$.
We mention that the requirement $$g_{13}=\langle {\bf p}_{1} \lambda_1, \ {\bf p}_{3}\lambda_3\lambda_1\rangle=-e^{{\bf i}\ba}$$ comes from Proposition \ref{prop-1.2} and  the property (\ref{angularslif}).  We can verify that  $\lambda_1$   of the form  (\ref{flam1}) is the desired solution.

 We mention that $\lambda_1$ given by (\ref{flam1})  is just a specific choice.   For example, let $\lambda_1'=\lambda_1 e^{{\bf i}\theta}$ for arbitrary $\theta$  and $\lambda_i'=\lambda_i, \ i=2,3,4$. Then  the Gram matrix  $G'=(g_{ij}')$ of $({\bf p}_{1}\lambda_1',\  {\bf p}_{2}\lambda_2'\bar{\lambda_1'}^{-1},\  {\bf p}_{3}\lambda_3' \lambda_1',\  {\bf p}_{4}\lambda_4' \bar{\lambda_1'}^{-1})$ also  satisfies $g_{ii}' = 0,\ i=1,\cdots 4,\   g_{12}'=g_{23}'=g_{34}'=1,\  g_{13}'=-e^{{\bf i}\ba}$.   However  $$g_{14}'= e^{-{\bf i}\theta}g_{14} e^{{\bf i}\theta},\ \   g_{24}' =e^{-{\bf i}\theta}g_{24} e^{{\bf i}\theta}.$$
 It is  the function  $\sigma(a,b)$ which eliminates this indeterminacy in the following third step.

 Thirdly we set \begin{equation}\label{fmuprop} \mu= \sigma(\langle {\bf p}_{1}\lambda_1, {\bf p}_{4}\lambda_4 \bar{\lambda_1}^{-1} \rangle,  \langle {\bf p}_{2}\lambda_2 \bar{\lambda_1}^{-1}, {\bf p}_{4}\lambda_4 \bar{\lambda_1}^{-1}\rangle).\end{equation}
Then the Gram matrix of lift  \begin{equation}\label{findnormlift}{\bf n}=({\bf n}_{1},{\bf n}_{2},{\bf n}_{3}, {\bf n}_{4})=({\bf p}_{1}\lambda_1\mu,\  {\bf p}_{2}\lambda_2\bar{\lambda_1}^{-1}\mu,\  {\bf p}_{3}\lambda_3 \lambda_1\mu,\  {\bf p}_{4}\lambda_4 \bar{\lambda_1}^{-1}\mu)\end{equation}  is the desired Gram matrix.
\end{proof}

The matrix  $G$ as in Proposition \ref{normprocess} is called  the {\it normalised Gram matrix}, which is  denoted  by  $G(\mathfrak{p})$.  The  corresponding  lift  ${\bf n}=({\bf n}_{1},{\bf n}_{2},{\bf n}_{3}, {\bf n}_{4})$ given by (\ref{findnormlift}) of $\mathfrak{p}$  is called  the  {\it normalised lift}.  That is
\begin{equation}\label{normmatrix}
G(\mathfrak{p})=G({\bf n})=(g_{ij})=\left(
                             \begin{array}{cccc}
                               0 & 1 & -e^{{\bf i}\ba} & \kappa_1 \\
                               1 & 0 & 1 & \kappa_2 \\
                               -e^{-{\bf i}\ba} & 1 & 0 & 1 \\
 \bar{\kappa_1} & \bar{\kappa_2} & 1 & 0 \\
                             \end{array}
                           \right),
\end{equation}
where $\kappa_1=c_1+t{\bf j}$ and $\kappa_2= c_2+c_3{\bf j}$.

Let $\p=(\p_1,\p_2,\p_3,\p_4)$ be a quadruple of points in $V_0\cup V_-$ such that $\bp(\p)$  are pairwise distinct points in $\overline{{\bf H}_{\bh}^n}$.
We define the following three cross-ratios:
\begin{equation}\label{cross1-2} \X_1({\bf p})=\X({\bf p}_{1},{\bf p}_{2},{\bf p}_{3}, {\bf p}_{4}),\ \X_2({\bf p})=\X({\bf p}_{2},{\bf p}_{4},{\bf p}_{3}, {\bf p}_{1}),\  X_3({\bf p})=\X({\bf p}_{1},{\bf p}_{4},{\bf p}_{3}, {\bf p}_{2}).\end{equation}

\begin{prop}\label{somerelations}
Let $G(\mathfrak{p})$ be a normalised Gram matrix  given by (\ref{normmatrix}) with a normalised lift  $${\bf n}=({\bf n}_{1},{\bf n}_{2},{\bf n}_{3}, {\bf n}_{4})=({\bf p}_{1}\nu_1,\  {\bf p}_{2}\nu_2,\  {\bf p}_{3}\nu_3,\  {\bf p}_{4}\nu_4)$$   given by (\ref{findnormlift}). That is  \begin{equation}\label{findnu12}\nu_1=\lambda_1\mu,\ \nu_2=\lambda_2\bar{\lambda_1}^{-1}\mu ,\ \nu_3=\lambda_3 \lambda_1\mu , \nu_4=\lambda_4 \bar{\lambda_1}^{-1}\mu. \end{equation}
Then
 \begin{itemize}
   \item[(i)] $\X_1({\bf n})=-e^{-{\bf i}\ba} \bar{\kappa_2} \ \bar{\kappa_1}^{-1},\ \X_2({\bf n})=\kappa_1,\ \  \ba_{\bh}({\bf n}_{1},{\bf n}_{2},{\bf n}_{3})=\ba;$

\item[(ii)] $ \kappa_1=\X_2({\bf n}),\  \kappa_2=-\X_2({\bf n})\overline{\X_1({\bf n})}e^{-{\bf i}\ba}, \  \ba=\ba_{\bh}({\bf n}_{1},{\bf n}_{2},{\bf n}_{3});$

\item[(iii)]   $\X_1({\bf p})=\bar{\nu_1}^{-1} \X_1({\bf n})\bar{\nu_1},\  \X_2({\bf p})=\bar{\nu_2}^{-1}\X_2({\bf n})\bar{\nu_2}, \  \ba_{\bh}({\bf p}_{1},{\bf p}_{2},{\bf p}_{3})=\ba_{\bh}({\bf n}_{1},{\bf n}_{2},{\bf n}_{3});$

    \item[(iv)]   $ \kappa_1=\bar{\nu_2}\X_2({\bf p})\bar{\nu_2}^{-1},\  \kappa_2=-\bar{\nu_2}\X_2({\bf p})\bar{\nu_2}^{-1}\nu_1^{-1}\overline{\X_1({\bf p})}\nu_1e^{-{\bf i}\ba_{\bh}({\bf p}_{1},{\bf p}_{2},{\bf p}_{3})}, \  \ba=\ba_{\bh}({\bf p}_{1},{\bf p}_{2},{\bf p}_{3});$

         \item[(v)]  $\X_3({\bf p})=\bar{\nu_1}^{-1} \X_3({\bf n})\bar{\nu_1}$, \ $\X_3({\bf n})=-e^{-{\bf i}\ba} \kappa_2.$

    \end{itemize}
\end{prop}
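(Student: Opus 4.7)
The plan is to prove Proposition \ref{somerelations} by direct computation, treating parts (i) and (v) as pure reads from the normalised Gram matrix, using the transformation rule (\ref{crat1sim}) to transport statements about $\mathbf{n}$ back to $\mathbf{p}$ in parts (iii)--(iv), and deriving (ii) by algebraically inverting (i).

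First I would dispense with (i) and the explicit part of (v). Since $\langle {\bf n}_i,{\bf n}_j\rangle = g_{ij}$ and $g_{ji}=\overline{g_{ij}}$, every cross-ratio evaluated on $\mathbf{n}$ is read off from (\ref{normmatrix}). Concretely,
\begin{equation*}
\X_1(\mathbf{n}) = g_{31}\,g_{32}^{-1}\,g_{42}\,g_{41}^{-1} = (-e^{-\mathbf{i}\ba})(1)(\bar\kappa_2)(\bar\kappa_1^{-1}),
\end{equation*}
\begin{equation*}
\X_2(\mathbf{n}) = g_{32}\,g_{34}^{-1}\,g_{14}\,g_{12}^{-1} = \kappa_1, \qquad \X_3(\mathbf{n}) = g_{31}\,g_{34}^{-1}\,g_{24}\,g_{21}^{-1} = -e^{-\mathbf{i}\ba}\kappa_2.
\end{equation*}
For the angular invariant, $\langle {\bf n}_1,{\bf n}_2,{\bf n}_3\rangle = g_{21}g_{32}g_{13} = -e^{\mathbf{i}\ba}$, so that
\begin{equation*}
\arccos\frac{\Re(e^{\mathbf{i}\ba})}{|e^{\mathbf{i}\ba}|} = \ba
\end{equation*}
since $\ba\in[0,\pi/2]$. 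This also settles the third assertion of (iii), whose general form is the invariance of $\ba_\bh$ under rescaling of lifts established in (\ref{angularslif}).

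Next I would derive (ii) by inverting the formulas in (i). The identity $\kappa_1 = \X_2(\mathbf{n})$ is immediate. For $\kappa_2$, take the quaternionic conjugate of $\X_1(\mathbf{n}) = -e^{-\mathbf{i}\ba}\bar\kappa_2\bar\kappa_1^{-1}$, using $\overline{ab}=\bar b\bar a$, to obtain $\overline{\X_1(\mathbf{n})} = -\kappa_1^{-1}\kappa_2 e^{\mathbf{i}\ba}$; then
\begin{equation*}
\X_2(\mathbf{n})\,\overline{\X_1(\mathbf{n})}\,e^{-\mathbf{i}\ba} = \kappa_1\cdot(-\kappa_1^{-1}\kappa_2 e^{\mathbf{i}\ba})\cdot e^{-\mathbf{i}\ba} = -\kappa_2,
\end{equation*}
giving the claimed formula. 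The only nuisance here is being careful about the order of multiplication, since $\kappa_1^{-1}$ does not commute with $\kappa_2$.

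For (iii) and (iv) I would invoke (\ref{crat1sim}): the effect on $\X(\cdot,\cdot,\cdot,\cdot)$ of rescaling each entry by nonzero quaternions is conjugation by $\overline{\lambda_1}$, where $\lambda_1$ is the scalar attached to the \emph{first} slot. Applied to $\mathbf{n}_i = \mathbf{p}_i\nu_i$, this gives $\X_1(\mathbf{n})=\bar\nu_1\X_1(\mathbf{p})\bar\nu_1^{-1}$, $\X_3(\mathbf{n})=\bar\nu_1\X_3(\mathbf{p})\bar\nu_1^{-1}$, and $\X_2(\mathbf{n})=\bar\nu_2\X_2(\mathbf{p})\bar\nu_2^{-1}$ (note that $\mathbf{n}_2$ occupies the first slot of $\X_2$). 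Solving for $\X_i(\mathbf{p})$ gives (iii) and the first formula of (iv). The second formula in (iv) is obtained by substituting these expressions, together with $\overline{\bar\nu_1\X_1(\mathbf{p})\bar\nu_1^{-1}}=\nu_1^{-1}\overline{\X_1(\mathbf{p})}\nu_1$, into the identity $\kappa_2=-\X_2(\mathbf{n})\overline{\X_1(\mathbf{n})}e^{-\mathbf{i}\ba}$ from (ii); the $\ba$ appearing there coincides with $\ba_\bh(\mathbf{p}_1,\mathbf{p}_2,\mathbf{p}_3)$ by the angular invariance already observed.

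None of these steps carries genuine geometric content; the entire proof is a compact bookkeeping exercise. The principal pitfall is purely notational: tracking which quaternion sits on which side of each product and which exponent carries a conjugate, so that conjugation identities like $\overline{\bar a^{-1}} = a^{-1}$ are applied correctly. I would therefore present the proof by laying out (i) and (v) first as direct reads of (\ref{normmatrix}), then writing the algebraic manipulation for (ii) explicitly, and finally invoking (\ref{crat1sim}) once and substituting to obtain (iii) and (iv) in a single combined display.
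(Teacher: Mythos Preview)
Your proposal is correct and follows essentially the same approach as the paper: compute $\X_1(\mathbf{n}),\X_2(\mathbf{n}),\X_3(\mathbf{n})$ and the angular invariant directly from the entries of the normalised Gram matrix, then invoke the rescaling rule (\ref{crat1sim}) to relate $\X_i(\mathbf{p})$ and $\X_i(\mathbf{n})$. The only cosmetic difference is that the paper applies (\ref{crat1sim}) in the direction $\mathbf{p}_i=\mathbf{n}_i\nu_i^{-1}$ rather than $\mathbf{n}_i=\mathbf{p}_i\nu_i$, and is terser about the algebraic inversion yielding (ii) and (iv); your explicit derivations of those parts are fine.
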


\begin{proof} Since $\langle \n_1, \n_3\rangle=-e^{{\bf i}\ba} $, $\langle \n_1, \n_4\rangle=\kappa_1$ and $\langle \n_2, \n_4\rangle=\kappa_2$, we have  $$\X_1({\bf n})=\X({\bf n}_{1},{\bf n}_{2},{\bf n}_{3}, {\bf n}_{4})=\langle \n_3, \n_1\rangle \langle \n_3, \n_2
 \rangle^{-1} \langle \n_4, \n_2\rangle \langle \n_4, \n_1\rangle^{-1}=-e^{-{\bf i}\ba} \bar{\kappa_2} \ \bar{\kappa_1},$$
   $$\X_2({\bf n})=\langle \n_3, \n_2\rangle \langle \n_3, \n_4
 \rangle^{-1} \langle \n_1, \n_4\rangle \langle \n_1, \n_2\rangle^{-1}=\kappa_1$$
 and $$\X_3({\bf n})=\langle \n_3, \n_1\rangle \langle \n_3, \n_4
 \rangle^{-1} \langle \n_2, \n_4\rangle \langle \n_2, \n_1\rangle^{-1}=-e^{-{\bf i}\ba} \kappa_2.$$
  By  properties of  quaternionic Cartan's angular  invariant, we have  $\ba_{\bh}({\bf n}_{1},{\bf n}_{2},{\bf n}_{3})=\ba_{\bh}({\bf p}_{1},{\bf p}_{2},{\bf p}_{3})=\ba$.
 Since ${\bf p}_{i}=\n_i\nu_i^{-1}$,  by (\ref{crat1sim}) we have $$\X_1({\bf p})=\X(\n_1\nu_1^{-1},\n_2\nu_2^{-1},\n_3\nu_3^{-1}, \n_4\nu_4^{-1})=\bar{\nu_1}^{-1} \X_1({\bf n})\bar{\nu_1}.$$   Similarly, we obtain  $\X_2({\bf p})=\bar{\nu_2}^{-1}\X_2({\bf n})\bar{\nu_2}$, \ $\X_3({\bf p})=\bar{\nu_1}^{-1}\X_3({\bf n})\bar{\nu_1}.$  Therefore  (i)-(v) hold.
\end{proof}

\begin{prop}\label{negreal}
Let $\kappa_1,\kappa_2,\ba$ be given by (\ref{dfntau}) stemming  from $\p=(\p_1,\p_2,\p_3,\p_4)$. Then
\begin{itemize}
   \item[(i)] $\Re(\kappa_2)=-|\kappa_2|\cos\ba_{\bh}(\p_2,\p_3,\p_4)\leq 0;$
      \item[(ii)] $\cos\ba=\cos\ba_{\bh}({\bf p}_{1},{\bf p}_{2},{\bf p}_{3})\geq 0;$
\item[(iii)] $\Re(\kappa_1 \bar{\kappa_2})=-|\kappa_1||\kappa_2|\cos\ba_{\bh}(\p_1,\p_2,\p_4)\leq 0;$
\item[(iv)] $$|\kappa_1|=|\X_2(\p)|=\frac{ |\langle \p_1, \p_4\rangle | |\langle \p_2, \p_3\rangle |}{|\langle \p_1, \p_2\rangle | |\langle \p_3, \p_4\rangle |},\ |\kappa_2|=|\X_3(\p)|=|\X_1(\p)||\X_2(\p)|=\frac{ |\langle \p_1, \p_3\rangle | |\langle \p_2, \p_4\rangle |}{|\langle \p_1, \p_2\rangle | |\langle \p_3, \p_4\rangle |}.$$
    \end{itemize}
\end{prop}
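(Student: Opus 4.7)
The plan is to reduce everything to the normalised lift ${\bf n}=({\bf n}_1,{\bf n}_2,{\bf n}_3,{\bf n}_4)$ of Proposition \ref{normprocess} and read the relevant triple products off the normalised Gram matrix (\ref{normmatrix}) directly. Since $\kappa_1$, $\kappa_2$, and the angle $\ba$ are by construction the entries $g_{14}$, $g_{24}$, and (via $g_{13}=-e^{{\bf i}\ba}$) of $G({\bf n})$, and since the quaternionic Cartan invariant and the moduli of cross-ratios are preserved by the rescalings that pass from ${\bf p}$ to ${\bf n}$ (compare (\ref{angularslif}) and (\ref{crat1sim})), it is enough to verify each identity in terms of ${\bf n}$ and then translate back.

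For parts (i)--(iii) the key step is to expand the three relevant triple products using $g_{12}=g_{23}=g_{34}=1$:
\[
\langle {\bf n}_2,{\bf n}_3,{\bf n}_4\rangle=\kappa_2,\qquad
\langle {\bf n}_1,{\bf n}_2,{\bf n}_3\rangle=-e^{{\bf i}\ba},\qquad
\langle {\bf n}_1,{\bf n}_2,{\bf n}_4\rangle=\bar{\kappa_2}\kappa_1.
\]
Proposition \ref{prop-1.2} applied to each of the corresponding triples then forces $\Re(\kappa_2)\leq 0$, $\cos\ba\geq 0$, and $\Re(\bar{\kappa_2}\kappa_1)=\Re(\kappa_1\bar{\kappa_2})\leq 0$, where the last equality uses the identity $\Re(ab)=\Re(ba)$ for quaternions recorded in Section \ref{sect-pre}. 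Feeding each triple product into the definition (\ref{angularsv}) of $\ba_{\bh}$ and rearranging yields the claimed equalities $\Re(\kappa_2)=-|\kappa_2|\cos\ba_{\bh}(\p_2,\p_3,\p_4)$ and $\Re(\kappa_1\bar{\kappa_2})=-|\kappa_1||\kappa_2|\cos\ba_{\bh}(\p_1,\p_2,\p_4)$, as well as $\cos\ba=\cos\ba_{\bh}(\p_1,\p_2,\p_3)$.

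For part (iv) I will invoke Proposition \ref{somerelations}(i),(v), which identifies $\X_2({\bf n})=\kappa_1$, $\X_3({\bf n})=-e^{-{\bf i}\ba}\kappa_2$, and $\X_1({\bf n})=-e^{-{\bf i}\ba}\bar{\kappa_2}\bar{\kappa_1}^{-1}$. Taking moduli immediately gives $|\kappa_1|=|\X_2({\bf n})|$ and $|\kappa_2|=|\X_3({\bf n})|=|\X_1({\bf n})||\X_2({\bf n})|$. Since the cross-ratio of a rescaled tuple is conjugate (in $\bh$) to the original by (\ref{crat1sim}), the moduli satisfy $|\X_j({\bf n})|=|\X_j({\bf p})|$ for $j=1,2,3$. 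Unfolding the definition (\ref{Vcross}) for $\X_2({\bf p})=\X(\p_2,\p_4,\p_3,\p_1)$ and $\X_3({\bf p})=\X(\p_1,\p_4,\p_3,\p_2)$ and simplifying with $|\langle\p_i,\p_j\rangle|=|\langle\p_j,\p_i\rangle|$ produces the two explicit fractions claimed.

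No step is technically demanding; the only mild subtlety is in part (iii), where the triple product entering the definition of $\ba_{\bh}(\p_1,\p_2,\p_4)$ collapses to $\bar{\kappa_2}\kappa_1$ rather than $\kappa_1\bar{\kappa_2}$, so one must invoke $\Re(ab)=\Re(ba)$ to match the statement. Everything else is a direct unpacking of the normalised Gram matrix combined with the triple-product identities already established in Propositions \ref{prop-1.2} and \ref{somerelations}.
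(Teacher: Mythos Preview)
Your proof is correct and follows essentially the same route as the paper: reduce to the normalised lift ${\bf n}$, identify the three triple products $\langle{\bf n}_2,{\bf n}_3,{\bf n}_4\rangle=\kappa_2$, $\langle{\bf n}_1,{\bf n}_2,{\bf n}_3\rangle=-e^{{\bf i}\ba}$, $\langle{\bf n}_1,{\bf n}_2,{\bf n}_4\rangle=\bar{\kappa_2}\kappa_1$ directly from the Gram matrix, apply Proposition~\ref{prop-1.2} and the definition of $\ba_{\bh}$, and read off part (iv) from Proposition~\ref{somerelations}. The only cosmetic difference is that the paper cites items (ii) and (v) of Proposition~\ref{somerelations} for part (iv) while you cite (i) and (v), but these contain the same modulus information.
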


\begin{proof}
Since $\kappa_2=\langle \n_2, \n_4\rangle=\langle \n_2, \n_3, \n_4\rangle$, by Proposition \ref{prop-1.2} we have $$\Re(-\kappa_2)=\Re(-\langle \n_2, \n_3, \n_4\rangle)=|\kappa_2|\frac{\Re(-\langle \n_2, \n_3, \n_4\rangle)}{|\langle \n_2, \n_3, \n_4\rangle|}=|\kappa_2|\cos\ba_{\bh}(\p_2,\p_3,\p_4)\geq 0.$$
It follows from $e^{{\bf i}\ba}=-\langle \n_1, \n_3\rangle=-\langle \n_1, \n_2, \n_3\rangle$ that $$\cos\ba=\Re(-\langle \n_1, \n_2, \n_3\rangle)=\frac{\Re(-\langle \n_1, \n_2, \n_3\rangle)}{|\langle \n_1, \n_2, \n_3\rangle|}=\cos\ba_{\bh}(\n_1,\n_2,\n_3)=\cos\ba_{\bh}(\p_1,\p_2,\p_3)\geq 0.$$
Noting that $\kappa_1=\langle \n_1, \n_4\rangle$, $\kappa_2=\langle \n_2, \n_4\rangle$ and $\langle \n_2, \n_1\rangle=1$,  we have \begin{eqnarray*}\Re(\kappa_1 \bar{\kappa_2})&=&\Re( \bar{\kappa_2}\kappa_1)=\Re(\langle \n_4, \n_2\rangle \langle \n_1, \n_4\rangle)
=\Re(\langle \n_2, \n_1\rangle \langle \n_4, \n_2\rangle \langle \n_1, \n_4\rangle)\\
&=&-\Re(-\langle {\bf n}_{1},{\bf n}_{2},{\bf n}_{4}  \rangle)=-|\kappa_1||\kappa_2|\cos\ba_{\bh}(\n_1,\n_2,\n_4)=-|\kappa_1||\kappa_2|\cos\ba_{\bh}(\p_1,\p_2,\p_4)\leq 0. \end{eqnarray*}
Therefore we prove (i)-(iii).  The assertion (iv) follows from (ii) and (v) of Proposition \ref{somerelations}.
\end{proof}
We remark that one can find more  relationship of quaternionic cross-ratios in  Sections 3, 4 of \cite{platis}.

\medskip

\subsection{The moduli space}

We know from  Proposition \ref{negreal} that a Hermitian matrix of the form  (\ref{normmatrix}) need satisfy some conditions to be a  normalised Gram matrix.  These
conditions are described in the following theorem.
\begin{thm}\label{homeo}
Let $G$ be a matrix of the  form (\ref{normmatrix}) with \begin{equation}\label{intrcond}\ba\in[0,\pi/2], \ \  \Re(\kappa_1 \bar{\kappa_2})\leq 0,\ \Re(\kappa_2)\leq 0,\   \kappa_1\neq 0,\ \  \kappa_2\neq 0.\end{equation}
Denote by
\begin{equation}\label{defdg} D(G)= 1+|\kappa_1|^2+|\kappa_2|^2-2\Re(\kappa_1)+2\Re(\kappa_2e^{-{\bf i}\ba})+2\Re(\bar{\kappa_1}\kappa_2e^{{\bf i}\ba}). \end{equation}
 Then we have the followings.
 \begin{itemize}
   \item[(i)] When $n=2$, $G$ is the normalised Gram matrix for some quadruple of pairwise distinct points in $\partial {\bf H}_{\bh}^n$  if and only if
\begin{equation}\label{condt1} D(G)=0.\end{equation}
  \item[(ii)] When $n>2$, $G$ is the normalised Gram matrix for some quadruple of pairwise distinct points in $\partial {\bf H}_{\bh}^n$  if and only if
\begin{equation}\label{condt2} D(G)\leq 0.\end{equation}
Moreover,  $D(G)=0$  if and only if  there exist  $\lambda_i\in \bh$ with $\sum_{i=1}^4|\lambda_i|\neq 0$  such that $\sum_{i=1}^4{\bf n}_{i}\lambda_i=0$.
 \end{itemize}
 \end{thm}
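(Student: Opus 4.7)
The approach is to prove both directions of each statement by constructing an explicit lift and reading off the obstructions as conditions on $G$. Using the transitivity of $\mathrm{Sp}(n,1)$ on ordered pairs of distinct null $\bh$-lines, I would first place $\n_1 = (1, 0, \ldots, 0)^T$ and $\n_2 = (0, \ldots, 0, 1)^T$, both null with $\langle \n_1, \n_2 \rangle = 1$. The conditions $\langle \n_1, \n_3 \rangle = -e^{{\bf i}\ba}$, $\langle \n_2, \n_3 \rangle = 1$ and $\langle \n_3, \n_3 \rangle = 0$ then force $\n_3 = (1, a_2, \ldots, a_n, -e^{-{\bf i}\ba})^T$ with $\sum_{i=2}^n |a_i|^2 = 2\cos\ba$, and an element of $\mathrm{Sp}(n-1) \subset \mathrm{Sp}(n,1)$ fixing $\n_1, \n_2$ normalises $\n_3$ further to $(1, \sqrt{2\cos\ba}, 0, \ldots, 0, -e^{-{\bf i}\ba})^T$.

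Writing $\n_4 = (b_1, \ldots, b_{n+1})^T$, the constraints $\langle \n_j, \n_4 \rangle = g_{j4}$ for $j = 1, 2$ pin down $b_{n+1} = \bar{\kappa}_1$ and $b_1 = \bar{\kappa}_2$, while $\langle \n_3, \n_4 \rangle = 1$ becomes, setting $\delta := 1 - \kappa_1 + \kappa_2 e^{-{\bf i}\ba}$, the single scalar equation $\bar{b}_2 \sqrt{2\cos\ba} = \delta$. Assuming $\cos\ba > 0$ this determines $|b_2|^2 = |\delta|^2/(2\cos\ba)$, and the nullness $\langle \n_4, \n_4 \rangle = 0$ then reads $\sum_{i=2}^n |b_i|^2 = -2\Re(\kappa_1 \bar{\kappa}_2)$. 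A quaternionic completion-of-squares calculation yields the key identity
$$2\cos\ba \sum_{i=3}^n |b_i|^2 \;=\; -\bigl(|\delta|^2 + 4\cos\ba \cdot \Re(\kappa_1 \bar{\kappa}_2)\bigr) \;=\; -D(G).$$
Claim (i) is now immediate: for $n = 2$ the left side is empty, so the lift exists iff $D(G) = 0$. For (ii) with $n > 2$, choosing $b_3 = \sqrt{-D(G)/(2\cos\ba)}$ and $b_i = 0$ for $i > 3$ solves the null condition iff $D(G) \leq 0$.

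The degenerate case $\cos\ba = 0$ (i.e., $\ba = \pi/2$) requires separate handling: here $\n_3 = (1, 0, \ldots, 0, -e^{-{\bf i}\ba})^T$, the equation $\bar{b}_2 \sqrt{2\cos\ba} = \delta$ collapses to the compatibility $\delta = 0$, and the identity above gives $D(G) = |\delta|^2 = 0$ automatically. The remaining null equation $\sum_{i=2}^n |b_i|^2 = -2\Re(\kappa_1 \bar{\kappa}_2)$ is solvable thanks to the hypothesis $\Re(\kappa_1 \bar{\kappa}_2) \leq 0$; Proposition \ref{proptrip} explains geometrically why $p_1, p_2, p_3$ lie on the boundary of a common $\bh$-line here. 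For the linear-dependence statement of (ii), note that $D(G) = 0$ forces $b_3 = \cdots = b_n = 0$, so all four $\n_i$ lie in the $3$-dimensional $\bh$-subspace $\{z_3 = \cdots = z_n = 0\}$ and are therefore linearly dependent over $\bh$; conversely, any nontrivial relation $\sum \n_i \lambda_i = 0$ yields $G\vec{\lambda} = 0$ by pairing against each $\n_j$, and read in the constructed coordinates this gives $b_i \lambda_4 = 0$ for every $i \geq 3$, which (combined with the rank of the remaining $3 \times 3$ linear system in $\lambda_1, \lambda_2, \lambda_3$) forces $b_i = 0$ for $i \geq 3$, i.e., $D(G) = 0$.

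The main obstacle will be verifying the quaternionic completion-of-squares identity $D(G) = |\delta|^2 + 4\cos\ba \cdot \Re(\kappa_1 \bar{\kappa}_2)$. In the complex hyperbolic setting this is a one-line expansion, but over $\bh$ the non-commutativity of $\kappa_1, \kappa_2$ with $e^{{\bf i}\ba}$ means that cross-terms like $\kappa_1 e^{{\bf i}\ba} \bar{\kappa}_2$ and $\bar{\kappa}_1 \kappa_2 e^{{\bf i}\ba}$ are not a priori equal; matching them up to the symmetric expression $D(G)$ requires careful use of the cyclic identity $\Re(ab) = \Re(ba)$, the conjugate identity $\Re(\bar{q}) = \Re(q)$, and the fact that $e^{{\bf i}\ba} + e^{-{\bf i}\ba} = 2\cos\ba$ is a central real scalar that does commute with $\kappa_1, \kappa_2$.
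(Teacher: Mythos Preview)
Your proposal is correct and follows essentially the same route as the paper. The paper places $\n_1,\n_2$ at the two standard null points, writes $\n_3,\n_4$ with middle blocks $\alpha,\beta\in\bh^{n-1}$, obtains the same three equations $\beta^*\alpha=1+\kappa_2e^{-{\bf i}\ba}-\kappa_1$, $|\alpha|^2=2\cos\ba$, $|\beta|^2=-2\Re(\kappa_1\bar\kappa_2)$, and then invokes Cauchy--Schwarz $|\beta^*\alpha|^2\le|\alpha|^2|\beta|^2$ to derive $D(G)\le 0$; your pre-normalisation of $\alpha$ to $(\sqrt{2\cos\ba},0,\dots,0)$ via ${\rm Sp}(n-1)$ simply replaces that Cauchy--Schwarz step by the equivalent observation $\sum_{i\ge 3}|b_i|^2\ge 0$, and the key identity $D(G)=|\delta|^2+4\cos\ba\,\Re(\kappa_1\bar\kappa_2)$ is exactly the computation the paper carries out.
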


\begin{proof}  We first prove necessity.
Suppose that $G({\bf n})$ is the normalised matrix with a normalised lift  ${\bf n}=({\bf n}_{1},{\bf n}_{2},{\bf n}_{3}, {\bf n}_{4})$. Since $G(\mathfrak{p})=G({\bf n})=G(f{\bf n})$, we  have   freedom to choose  some specific  normalised lifts.
Since   ${\rm PSp}(n,1)$ acts doubly transitively on $V_0$, by our normalised process  we may assume that

\begin{equation}\label{findn}{\bf n}_{1}=\left(\begin{array}{ccc}
  0 \\
  \vdots\\
  0\\
  \lambda
\end{array}\right),\ \
{\bf n}_{2}=\left(\begin{array}{ccc}
  \bar{\lambda}^{-1}\\
  0 \\
  \vdots\\
  0
 \end{array}\right),\ \
{\bf n}_{3}=\left(\begin{array}{ccc}
  -\bar{\lambda}^{-1}e^{-{\bf i}\ba}\\
  \alpha \\
  \lambda
 \end{array}\right),\ \
 {\bf n}_{4}=\left(\begin{array}{ccc}
  \bar{\lambda}^{-1} \bar{\kappa_1}\\
  \beta \\
  \lambda \bar{\kappa_2}
 \end{array}\right),
\end{equation}
where $\alpha, \beta$ are column vectors in $\bh^{n-1}$.
By $\langle {\bf n}_{3},{\bf n}_{4}\rangle=1$ and $\langle {\bf n}_{3},{\bf n}_{3}\rangle=\langle {\bf n}_{4},{\bf n}_{4}\rangle=0$, we have
$$\beta^*\alpha=1+\kappa_2e^{-{\bf i}\ba}-\kappa_1,\ |\alpha|^2=2\cos\ba,\ |\beta|^2=-2\Re(\kappa_1 \bar{\kappa_2}).$$
Note that $$|\beta|^2|\alpha|^2 \geq |\beta^*\alpha|^2.$$
That is
\begin{eqnarray*}-4\cos\ba \Re(\kappa_1 \bar{\kappa_2})&\geq& |1+\kappa_2e^{-{\bf i}\ba}-\kappa_1|^2\\
&=&1+|\kappa_1|^2+|\kappa_2|^2-2\Re(\kappa_1)+2\Re(\kappa_2e^{-{\bf i}\ba})-2\Re(\bar{\kappa_2}\kappa_1e^{{\bf i}\ba})\\
&=& D(G)-2\Re(\bar{\kappa_1}\kappa_2e^{{\bf i}\ba})-2\Re(\bar{\kappa_2}\kappa_1e^{{\bf i}\ba})\\
&=& D(G)-2(\bar{\kappa_1}\kappa_2+\bar{\kappa_2}\kappa_1 ) \Re(e^{{\bf i}\ba}).\end{eqnarray*} Therefore
$$D(G)= 1+|\kappa_1|^2+|\kappa_2|^2-2\Re(\kappa_1)+2\Re(\kappa_2e^{-{\bf i}\ba})+2\Re(\bar{\kappa_1}\kappa_2e^{{\bf i}\ba})\leq 0.$$
$D(G)=0$ if and only if $\beta=\alpha \mu$ or $\alpha=\beta \nu$ for some  $\mu,\nu\in \bh$.  Therefore the case  $D(G)=0$ implies that there exist  $\lambda_i\in \bh$ with $\sum_{i=1}^4|\lambda_i|\neq 0$  such that $\sum_{i=1}^4{\bf n}_{i}\lambda_i=0$.   $\alpha$ and $\beta$ are quaternions  for the case $n=2$, which implies that we always have $D(G)=0$ for this case.

For  sufficiency,  we need to find an  ${\bf n}=({\bf n}_{1},{\bf n}_{2},{\bf n}_{3}, {\bf n}_{4})$  whose normalised Gram matrix is $G$ under the conditions (\ref{intrcond}) and (\ref{condt1}),   or (\ref{condt2}).
We consider the normalised polar vectors of the following form:\begin{equation}\label{findn}{\bf n}_{1}=(0,\cdots,0,\lambda)^T, {\bf n}_{2}=(\bar{\lambda}^{-1},0,\cdots,0)^T, {\bf n}_{3}=(-\bar{\lambda}^{-1}e^{-{\bf i}\ba},\alpha^T,\lambda)^T, {\bf n}_{4}=(\bar{\lambda}^{-1} \bar{\kappa_1},\beta^T, \lambda \bar{\kappa_2})^T,\end{equation}
where $\lambda\in \bh, \ \alpha,\ \beta\in \bh^{n-1}$.
We need to find  solutions of the following  underdetermined system of equations:
\begin{eqnarray*}
\beta^*\alpha &=& 1+\kappa_2e^{-{\bf i}\ba}-\kappa_1,\\
 |\alpha|^2&=& 2\cos\ba,\\
  |\beta|^2&=&-2\Re(\kappa_1 \bar{\kappa_2}).
\end{eqnarray*}

We first consider the case $D(G)=0$.

Note that $$D(G)=|1+\kappa_2e^{-{\bf i}\ba}-\kappa_1|^2+4\cos\ba \Re(\kappa_1 \bar{\kappa_2}).$$ If $\ba=\pi/2$  then
$$\lambda=1,\ \alpha=(0,\cdots,0)^T,\  \beta=(\sqrt{-2\Re(\kappa_1 \bar{\kappa_2})},0,\cdots,0)^T.$$
are the desired solutions. If $\ba\neq \pi/2$  then
$$\lambda=1,\ \alpha=(\sqrt{2\cos\ba},0,\cdots,0)^T,\  \beta=\alpha\mu,\ \mbox{where}\  \mu=\frac{1+e^{{\bf i}\ba} \bar{\kappa_2}- \bar{\kappa_1}}{2\cos\ba},$$
are the desired solutions.

For the case  $D(G)<0$  and $n>2$,
$$\lambda=1,\ \alpha=(\sqrt{2\cos\ba},0,\cdots,0)^T,\  \beta=(\frac{1+e^{{\bf i}\ba} \bar{\kappa_2}- \bar{\kappa_1}}{\sqrt{2\cos\ba}},0,\cdots,0,\frac{\sqrt{-D(G)}}{\sqrt{2\cos\ba}})^T.$$
are the desired solutions.
\end{proof}

We mention that  (\ref{intrcond}) and  (\ref{defdg}) can be rephrased in terms of  the parameters $ (c_1,c_2,c_3,t;\ba)$ as
\begin{equation}\label{s-5restrictions-2}\ba\in[0,\pi/2], \ \  \Re(c_1\bar{c_2})+t\Re(c_3)\leq 0,\ \Re(c_2)\leq 0,\ \ t\geq 0,\ \ |c_1|^2+t^2\neq 0,\ \ |c_2|^2+|c_3|^2\neq 0\end{equation}
and
\begin{equation}\label{s5-condtmain-2} D(G)= 1+|c_1|^2+|c_2|^2+|c_3|^2+t^2-2\Re(c_1)+2\Re(c_2e^{-{\bf i}\ba})+2\Re\big((\bar{c_1}c_2+t\bar{c_3})e^{{\bf i}\ba}\big).\end{equation}

\vspace{2mm}

\noindent {\bf Remark 5.1.} \   In the complex case, we can view ${\bf n}=({\bf n}_{1},{\bf n}_{2},{\bf n}_{3}, {\bf n}_{4})$ as a matrix.  Then  the normalised matrix $G$ is  $$G={\bf n}^*J{\bf n}.$$ Therefore $$\det G=|\det {\bf n}|^2 \det J=-|\det {\bf n}|^2\leq 0.$$  $\det G=0$ if and only if $\det {\bf n}=0$.  The case $\det {\bf n}=0$ implies that ${\bf n}_{1},{\bf n}_{2},{\bf n}_{3}$ and  ${\bf n}_{4}$ are linearly  dependent.

\vspace{2mm}

We now prove  Theorem \ref{thm-moduli}.

\medskip

{\it Proof of Theorem \ref{thm-moduli}.}\quad  By our normalised process in Proposition \ref{normprocess},   the map $\tau$ define a map $\tau:\mathcal{M}(n)\to \bm(n)$. Theorem \ref{homeo} implies that such map is bijective. It is obvious that  $\tau:\mathcal{M}(n)\to \bm(n)$ is a homeomorphism  because  $\bm(n)$ has the topology  structure induced from $\bc^3\times \br\times \br$. \hfill$\square$

\medskip
\subsection{The  congruence class of  quadruple of pairwise distinct points  of  $\partial {\bf H}_{\bh}^n$}\label{cong1-quad}

Similarly  to Proposition 2.2 and Corollary 2.3 in \cite{cungus10}, we  can prove  the following proposition.
\begin{prop}\label{sect5-prop}
Let  $\mathfrak{p}=(p_1,p_2,p_3,p_4)$  and  $\mathfrak{q}=(q_1,q_2,q_3,q_4)$  be  two quadruples of pairwise distinct points in $\partial {\bf H}_{\bh}^n$. Then
\begin{itemize}
  \item[(i)] $\mathfrak{p}$ and $\mathfrak{q}$  are congruent in ${\rm PSp}(n,1)$ if and only if their associated Gram matrices are equivalent.
 \item[(ii)] $\mathfrak{p}$ and $\mathfrak{q}$  are congruent in ${\rm PSp}(n,1)$ if and only if $G(\mathfrak{p})=G(\mathfrak{q})$.
\end{itemize}
\end{prop}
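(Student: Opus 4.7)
The plan is to reduce Proposition \ref{sect5-prop} to the standard principle that in a Hermitian space vectors with the same Gram matrix are related by an isometry, then use the uniqueness of the normalised form established in Proposition \ref{normprocess} to pass from (i) to (ii).

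For the forward direction of (i), suppose $\mathfrak{q}=f(\mathfrak{p})$ for some $f\in {\rm PSp}(n,1)$, lifted to $\tilde f\in{\rm Sp}(n,1)$. Given any lift ${\bf p}=({\bf p}_1,\dots,{\bf p}_4)$ of $\mathfrak{p}$, the vectors $\tilde f{\bf p}_i$ are null lifts of $q_i$, so any lift ${\bf q}=({\bf q}_1,\dots,{\bf q}_4)$ of $\mathfrak{q}$ satisfies ${\bf q}_i=\tilde f{\bf p}_i\lambda_i$ for some nonzero $\lambda_i\in\bh$. Since $\tilde f$ preserves the Hermitian form, $G({\bf q})=D^*G({\bf p})D$ with $D={\rm diag}(\lambda_1,\dots,\lambda_4)$, exhibiting the required equivalence.

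For the converse of (i), assume the Gram matrices are equivalent. After rescaling the lifts we may assume $G({\bf p})=G({\bf q})$; call this common matrix $G$. The argument is now a quaternionic Witt-type extension: I would first verify that the $\bh$-span $W$ of ${\bf p}_1,\dots,{\bf p}_4$ in $\bh^{n,1}$ has the Hermitian form determined by $G$, and similarly for the span $W'$ of ${\bf q}_1,\dots,{\bf q}_4$. Because the Gram matrices agree, the $\bh$-linear map sending ${\bf p}_i\mapsto{\bf q}_i$ is a well-defined isometry between $W$ and $W'$ (well-defined on relations since any relation $\sum{\bf p}_i\lambda_i=0$ forces $\sum\langle{\bf p}_j,{\bf p}_i\rangle\lambda_i=0$ for each $j$, the same system satisfied by the ${\bf q}_i$). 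Extend this partial isometry to an element of ${\rm Sp}(n,1)$ by choosing a Hermitian-compatible complement and applying the standard Gram--Schmidt construction for the signature $(n,1)$ form. The extension is possible because the Hermitian form on $\bh^{n,1}$ is nondegenerate and has signature $(n,1)$; this is where the proof of \cite[Proposition 2.2]{cungus10} is reproduced verbatim in the quaternionic setting.

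Part (ii) then follows immediately: one direction is a consequence of (\ref{isomap}), which gives $G(\mathfrak{p})=G(\mathfrak{q})$ whenever $\mathfrak{p}$ and $\mathfrak{q}$ are congruent because the normalised lift is itself transported by $\tilde f$. Conversely, if $G(\mathfrak{p})=G(\mathfrak{q})$ then the associated equivalence classes of Gram matrices coincide, so (i) yields congruence.

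The main obstacle is the extension step in (i): one must be careful that the map ${\bf p}_i\mapsto{\bf q}_i$ respects all $\bh$-linear relations and that its extension to $\bh^{n,1}$ genuinely lies in ${\rm Sp}(n,1)$ rather than merely in some group of partial isometries. Because quaternionic scalars act on the right while the Hermitian form is sesquilinear, every computation has to preserve the side on which scalars act; once that bookkeeping is set up, the Witt-type extension goes through exactly as in the complex case.
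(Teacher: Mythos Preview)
Your proposal is correct and follows precisely the route the paper itself indicates: the paper does not give a detailed argument but simply says the proof goes ``similarly to Proposition 2.2 and Corollary 2.3 in \cite{cungus10}'', i.e., by the Witt-type extension you describe together with the uniqueness of the normalised Gram matrix from Proposition \ref{normprocess}. One small point worth making explicit in your well-definedness step: from $\sum_i\langle{\bf q}_i,{\bf q}_j\rangle\lambda_i=0$ for all $j$ you only get that $\sum_i{\bf q}_i\lambda_i$ lies in the radical of the form restricted to $W'=\operatorname{span}\{{\bf q}_i\}$; this radical is indeed trivial because $W'$ contains the hyperbolic plane spanned by ${\bf q}_1,{\bf q}_2$ and its orthogonal complement in $\bh^{n,1}$ is positive definite, so $W'$ is nondegenerate.
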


We theoretically  solve  the problem  whether  two quadruples of pairwise distinct points in $\partial {\bf H}_{\bh}^n$ are in the same congruence class  or not by  Proposition \ref{sect5-prop} (ii).  We can figure out the corresponding  parameters $(c_1,c_2,c_3,t;\ba)$  by the three steps in Section 1 and compare them.
When one consider this problem only using the geometric invariants  such as  the involved  quaternionic cross-ratios and  quaternionic  Cartan's angular  invariants, Propositions \ref{somerelations} and \ref{negreal} also provide  some equalities to compute the parameters $(c_1,c_2,c_3,t;\ba)$.  However,  we have used $\nu_1$ and $\nu_2$ to describe  the relationship between $\X_i(\p)$ and $\X_i(\n),  i=1,2$  in Proposition \ref{somerelations}  (ii).  Note that  $\nu_1$ and $\nu_2$ are expressed by  the two functions $\nu(a)$ and $\sigma(a,b)$.  Therefore it is difficult for us to obtain the parameters $(c_1,c_2,c_3,t;\ba)$ in terms of $\X_i(\p),  i=1,2,3$ and $\ba_{\bh}(\p_{\iota(1)},\p_{\iota(2)},\p_{\iota(3)}),  \iota\in S_4$.  Here $S_4$ is the symmetric group of degree $4$.

So it makes sense for us to come up with  a theorem about the  congruence classes of quadruples   of pairwise distinct points on $\partial{\bf H}_\bh^n$ only using    geometric invariants.

Before establishing  such a theorem,  we  suggest a way to evade the embarrassing question caused by  $\nu(a)$ and $\sigma(a,b)$. We mention that there exists an interesting  Gram matrix given by
$${\bf m}=({\bf p}_{1}\lambda_1,\  {\bf p}_{2}\lambda_2\lambda_1^{-1},\  {\bf p}_{3}\lambda_3 \lambda_1,\  {\bf p}_{4}\lambda_4 \lambda_1^{-1}),$$
where   \begin{equation*}\label{flam2-4'}\lambda_2=\langle {\bf p}_{2}, {\bf p}_{1}\rangle^{-1},\ \lambda_3=\langle {\bf p}_{3}, {\bf p}_{2}\rangle^{-1}\langle {\bf p}_{1}, {\bf p}_{2}\rangle, \ \lambda_4=\langle {\bf p}_{4}, {\bf p}_{3}\rangle^{-1}\langle {\bf p}_{2}, {\bf p}_{3}\rangle \langle {\bf p}_{2}, {\bf p}_{1}\rangle^{-1} \end{equation*}
 and
 \begin{equation}\label{flam1'}\lambda_1=| \langle {\bf p}_{1}, {\bf p}_{3}\lambda_3\rangle |^{-1/2}=\sqrt{\frac{|\langle {\bf p}_{2}, {\bf p}_{3}\rangle|}{|\langle {\bf p}_{2}, {\bf p}_{1}\rangle  \langle {\bf p}_{1}, {\bf p}_{3}\rangle|}}\ .\end{equation}
 Computation shows that   \begin{equation}\label{inter-gram}G({\bf m} )=(g_{ij})=\left(
                             \begin{array}{cccc}
                               0 & 1 & \omega_3 & \omega_1 \\
                               1 & 0 & 1 & \omega_2 \\
                               \bar{\omega_3} & 1 & 0 & 1 \\
 \bar{\omega_1} & \bar{\omega_2} & 1 & 0 \\
                             \end{array}
                           \right),\end{equation}
 where $$\omega_3=g_{13}=\frac{\langle\p_1,\p_2,\p_3\rangle}{|\langle\p_1,\p_2,\p_3\rangle|}, \   \omega_1=g_{14}=\frac{\langle\p_2,\p_1\rangle \X_2(\p)\langle\p_1,\p_2\rangle}{|\langle\p_1,\p_2\rangle|^2}, \ \omega_2=g_{24}=\omega_3\X_3(\p) .$$

A matrix of  the form  (\ref{inter-gram}) is called  a {\it semi-normalised Gram matrix}.  We represent such a Hermitian matrix by $$V({\bf m})=(\omega_1,\omega_2,\omega_3).$$
Let $G$   and $\tilde{G}$ be two semi-normalised Gram matrix represented by  $(\omega_1, \omega_2, \omega_3)$ and $(\tilde{\omega}_1, \tilde{\omega}_2, \tilde{\omega}_3)$, respectively.  By Proposition \ref{sect5-prop} (i),   $\tilde{G}$ and $G$  are equivalent  if and only if there exists a matrix
 $$D=\mu I_4,\  \mu\in \bh, \ |\mu|=1$$ such that $$\tilde{G} = D^*GD.$$
 That is  $$ (\tilde{\omega}_1, \tilde{\omega}_2, \tilde{\omega}_3)=\bar{\mu}(\omega_1, \omega_2, \omega_3)\mu=(\bar{\mu}\omega_1\mu, \bar{\mu}\omega_2\mu, \bar{\mu}\omega_3\mu). $$
 Based on the above observation, we define an equivalent relation in $\bc\times\bc\times\bc$ by the above equality and denote it by  $$(\tilde{\omega}_1, \tilde{\omega}_2, \tilde{\omega}_3) \simeq (\omega_1, \omega_2, \omega_3).$$
  Let ${\rm M}$ be the set of representations $(\omega_1, \omega_2, \omega_3)$.  Then the configuration space $\mathcal{M}(n)$  can be viewed as the quotient
  of ${\rm M}$ under this equivalent relation.  That is  $\mathcal{M}(n)=M/\simeq$.  The description of ${\rm M}$ is left as an exercise for the  reader.

We revert to the original topic of this subsection.  The following theorem provide a criterion on  classification of congruence class  of two  quadruples of pairwise distinct points in $\partial {\bf H}_{\bh}^n$, which somewhat can be viewed as a description of the above equivalent relation by geometric invariants.

\begin{thm}\label{thmcong}
 Let  $\fp=(p_1, p_2, p_3,p_4)$ and  $\fq=(q_1, q_2, q_3,q_4)$ be two
quadruple of pairwise distinct points in   $\partial {\bf H}_{\bh}^n.$  Then there exists an isometry
$h\in  {\rm Sp}(n,1)$ such that $h(p_1)=q_1, h(p_2)=q_2,h(p_3)=q_3,h(p_4)=q_4$   if and only if  the following conditions hold:
\begin{itemize}
  \item[(i)]$ \X(p_1, p_2, p_3, p_4)\sim \X(q_1, q_2, q_3, q_4)$, $\X(p_1, p_4, p_3, p_2)\sim\X(q_1, q_4, q_3, q_2)$  and $\X(p_2, p_4, p_3, p_1)\sim \X(q_2, q_4, q_3, q_1)$;
  \item[(ii)] $\ba_{\bh}(p_1, p_2, p_3)=\ba_{\bh}(q_1, q_2, q_3)$,\  $\ba_{\bh}(p_1, p_2, p_4)=\ba_{\bh}(q_1, q_2, q_4)$ and $\ba_{\bh}(p_2, p_3, p_4)=\ba_{\bh}(q_2, q_3, q_4)$.
\end{itemize}
\end{thm}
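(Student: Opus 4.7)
Necessity is immediate since every cross-ratio (up to similarity) and every angular invariant is ${\rm Sp}(n,1)$-invariant. For sufficiency, my plan is to pass to the semi-normalised Gram matrix framework. By Proposition~\ref{sect5-prop}(i) and the discussion of the equivalence relation $\simeq$ just above the theorem, $\fp$ and $\fq$ are congruent if and only if there is a unit quaternion $\mu\in\bh$ with $\bar\mu\omega_i(\fp)\mu=\omega_i(\fq)$ for $i=1,2,3$, where $\omega_1=g_{14}$, $\omega_2=g_{24}$, $\omega_3=g_{13}$ are the entries of the semi-normalised Gram matrix. The strategy is to translate conditions (i) and (ii) into scalar equalities of the $\omega_i$ and then exhibit such a $\mu$.

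Using the identities $\omega_1\sim\X_2(\p)$, $\omega_2=\omega_3\X_3(\p)$, $\omega_3=\langle\p_1,\p_2,\p_3\rangle/|\langle\p_1,\p_2,\p_3\rangle|$, together with Propositions~\ref{somerelations} and \ref{negreal}, condition (ii) supplies the common values of $\Re(\omega_3)$, $\Re(\omega_2)$ and $\Re(\bar\omega_2\omega_1)$ between $\fp$ and $\fq$, while condition (i) supplies common values of $|\omega_i|$, $\Re(\omega_1)=\Re(\X_2(\p))$, $\Re(\bar\omega_3\omega_2)=\Re(\X_3(\p))$, and $\Re(\bar\omega_3\bar\omega_2\omega_1)/|\omega_1|^2=\Re(\X_1(\p))$. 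Conjugation $\omega\mapsto\bar\mu\omega\mu$ by a unit quaternion fixes real parts and acts as a (diagonal) $SO(3)$-rotation on the imaginary-part triple $(\Im\omega_1,\Im\omega_2,\Im\omega_3)\in(\br^3)^3$, so finding $\mu$ reduces to exhibiting an $SO(3)$-equivalence of the two imaginary-part triples, i.e., to equality of their $3\times3$ mutual Gram matrix $(\langle\Im\omega_i,\Im\omega_j\rangle)$ together with the signed triple product $V=\det(\Im\omega_1,\Im\omega_2,\Im\omega_3)$. The diagonal Gram entries $|\Im\omega_i|^2=|\omega_i|^2-\Re(\omega_i)^2$ and the off-diagonals $\langle\Im\omega_1,\Im\omega_2\rangle=\Re(\bar\omega_2\omega_1)-\Re(\omega_1)\Re(\omega_2)$ and $\langle\Im\omega_2,\Im\omega_3\rangle=\Re(\bar\omega_3\omega_2)-\Re(\omega_2)\Re(\omega_3)$ then already agree between $\fp$ and $\fq$ from the preceding step.

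The remaining entry $y=\langle\Im\omega_1,\Im\omega_3\rangle$ and the signed volume $V$ I would recover from the triple-product identity
\[
\Re(\bar\omega_3\bar\omega_2\omega_1)=\Re(\omega_1)\Re(\omega_2)\Re(\omega_3)+\Re(\omega_3)\langle\Im\omega_1,\Im\omega_2\rangle+\Re(\omega_2)y-\Re(\omega_1)\langle\Im\omega_2,\Im\omega_3\rangle+V
\]
coupled with the algebraic relation $V^{2}=\det(\langle\Im\omega_i,\Im\omega_j\rangle)_{3\times3}$, which after substitution produces a quadratic equation in $y$ with at most two roots, corresponding to the two signs of $V$. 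The final step would argue that the boundary relation $D(G)\le 0$ from Theorem~\ref{homeo}, viewed as a scalar invariant of each quadruple and depending on $V$ through the normalised parameters, discriminates between these two roots and pins down the correct $SO(3)$-orbit; once the orbits agree, Proposition~\ref{sect5-prop} yields the required $\mu$ and completes the proof.

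The main obstacle is precisely this last step: verifying cleanly that the boundary inequality $D(G)\le 0$ uniquely selects the sign of $V$ and thereby rules out the spurious root, together with the separate treatment of the degenerate sub-cases in which $\Re(\omega_2)=0$, $\sin\ba=0$, or $(\Im\omega_1,\Im\omega_2,\Im\omega_3)$ is coplanar (so that the Gram determinant vanishes, the quadratic collapses to a linear equation, and the $SO(3)$-orbit is forced without appeal to orientation).
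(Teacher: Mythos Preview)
Your reduction is essentially the paper's, recast in the semi-normalised Gram framework rather than in explicit coordinates. The paper normalises $p_1=q_1=o$, $p_2=q_2=\infty$, writes $p_3=(r_1,\dots,r_n)$, $p_4=(s_1,\dots,s_n)$ (and similarly for $\fq$), and from (i) and (ii) extracts exactly the scalar data you extract: the real parts and moduli of a triple of quaternions $(r_1,s_1,\langle\hat\p_3,\hat\p_4\rangle)$, together with the three pairwise angles between their imaginary parts (via Proposition~\ref{prop-addqu}). These three quaternions are your $(\omega_1,\omega_2,\omega_3)$ up to a fixed rescaling. The paper then simply asserts that, because ${\rm Sp}(1)$ acts on $\Im\bh$ as $SO(3)$, a unit quaternion $\mu$ exists conjugating one triple to the other, and from $\mu$ it writes down the explicit isometry $h={\rm diag}(\mu/\sqrt\kappa,A,\sqrt\kappa\,\mu)$.

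The step you flag as the main obstacle is exactly the step the paper's proof does not justify: equality of the three pairwise angles yields only $O(3)$-equivalence of the imaginary-part triples, and the paper offers no argument that the handedness (your signed volume $V$) must also match. So your concern is well placed; the paper's own argument contains the same lacuna rather than a resolution you have overlooked.

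Your proposed repair via $D(G)\le0$, however, cannot succeed. Every summand of $D(G)$ in (\ref{defdg}) is already determined by the invariants in (i) and (ii): $|\kappa_1|,|\kappa_2|,\Re(\kappa_1)$ come from $|\X_2|,|\X_3|,\Re(\X_2)$; from Proposition~\ref{somerelations}(v) the identity $\X_3({\bf n})=-e^{-{\bf i}\ba}\kappa_2$ gives $\Re(\kappa_2 e^{-{\bf i}\ba})=-\Re(\X_3)$; and from Proposition~\ref{somerelations}(ii) the identity $\kappa_2=-\kappa_1\overline{\X_1({\bf n})}\,e^{-{\bf i}\ba}$ gives $\Re(\bar\kappa_1\kappa_2 e^{{\bf i}\ba})=-|\kappa_1|^2\Re(\X_1)$. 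Hence $D(G)$ takes the same value for $\fp$ and for $\fq$ automatically, and the inequality $D(G)\le0$ is blind to the sign of $V$: it cannot discriminate between the two roots of your quadratic in $y$. The only situations in which your scheme (and the paper's) closes cleanly are the degenerate ones you list, where the imaginary-part triple is coplanar and $V=0$, so that $O(3)$- and $SO(3)$-equivalence coincide.
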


We need the following properties of quaternions to prove Theorem \ref{thmcong}. The proof of them is by direct computations.
\begin{prop}\label{prop-addqu}
\begin{itemize}
  \item[(i)]For any quaternions $a$ and $b$ we have
$$\Re(ab)=\Re(a)\Re(b)+\Re\big(\Im(a)\Im(b)\big);$$
  \item[(ii)] For $a=a_1{\bf i}+a_2{\bf j}+a_3{\bf k}$ and $b=b_1{\bf i}+b_2{\bf j}+b_3{\bf k}$, $a_i,b_i\in \br$,  the angle $\phi$  between vectors $(a_1,a_2,a_3)$ and $(b_1,b_2,b_3)$ is $$\phi=\arccos\big(\frac{\Re(a\bar{b})}{|a b|}\big)=\arccos\big(\frac{-\Re(a b)}{|a b|}\big).$$
\end{itemize}
\end{prop}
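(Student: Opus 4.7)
The plan is to prove both parts by direct expansion using the $\br$-algebra structure of $\bh$, separating real and purely imaginary components.

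For part (i), I would write $a = \Re(a) + \Im(a)$ and $b = \Re(b) + \Im(b)$, where $\Re(a),\Re(b) \in \br$ commute with everything and $\Im(a),\Im(b)$ are purely imaginary quaternions. Expanding bilinearly,
\[
ab = \Re(a)\Re(b) + \Re(a)\Im(b) + \Im(a)\Re(b) + \Im(a)\Im(b).
\]
The first summand is real, so its real part is itself. The middle two summands are real scalar multiples of pure imaginary quaternions, hence have vanishing real part. Taking $\Re$ throughout therefore yields exactly $\Re(ab) = \Re(a)\Re(b) + \Re(\Im(a)\Im(b))$, which is (i).

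For part (ii), since $a,b$ are already purely imaginary I would compute $\bar b = -b_1{\bf i} - b_2{\bf j} - b_3{\bf k}$ and expand $a\bar b$ directly using ${\bf i}^2={\bf j}^2={\bf k}^2=-1$ and the cyclic relations ${\bf i}{\bf j}={\bf k}$, etc. The diagonal terms contribute $a_1 b_1 + a_2 b_2 + a_3 b_3$ (real), while the off-diagonal terms pair into pure imaginary quantities, so
\[
\Re(a\bar b) = a_1 b_1 + a_2 b_2 + a_3 b_3,
\]
which is the Euclidean inner product of the vectors $(a_1,a_2,a_3)$ and $(b_1,b_2,b_3)$. Since $a,b$ are pure, $|a\bar b| = |a|\,|\bar b| = |a|\,|b| = |ab|$, so $\Re(a\bar b)/|ab|$ is precisely the cosine of the angle $\phi$ between these $\br^3$-vectors, giving the first equality.

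For the second equality, the key observation is that for any pure imaginary $b$ one has $\bar b = -b$, so $a\bar b = -ab$ and hence $\Re(a\bar b) = -\Re(ab)$. Substituting into the first equality gives $\arccos(-\Re(ab)/|ab|)$ as well. The entire argument is elementary; no step is a genuine obstacle, and the only small care required is keeping track of signs when replacing $\bar b$ by $-b$ and verifying that the cross terms in the expansion of $a\bar b$ are indeed purely imaginary, which follows from ${\bf i}{\bf j}+{\bf j}{\bf i}=0$ and its cyclic permutations.
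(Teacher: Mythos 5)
Your proof is correct and is exactly the "direct computation" the paper invokes (the paper gives no further detail beyond stating that the proof is by direct computation). Both the decomposition $a=\Re(a)+\Im(a)$ in (i) and the expansion of $a\bar b$ with $\bar b=-b$ for pure imaginary $b$ in (ii) are the intended elementary argument.
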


 {\it Proof of Theorem \ref{thmcong}.}\quad  The necessity is clear.  Since  ${\rm Sp}(n,1)$ acts transitively on  $\partial {\bf H}_{\bh}^n$  we may assume that $p_1=q_1=o$ and $p_2=q_2=\infty$.  For  sufficiency we need to find  an  $h\in G_{o,\infty}$ such that $$h(p_3)=q_3,\   h(p_4)=q_4$$ under the conditions (i) and (ii).
For convenience, we set $$p_3=(r_1,\cdots,r_n),\  p_4=(s_1,\cdots,s_n),\  q_3=(z_1,\cdots,z_n), \ q_4=(w_1,\cdots,w_n).$$

    The  condition $\X(o,\infty, p_3, p_4)\sim \X(o,\infty, q_3, q_4)$ implies that \begin{equation}\label{0w34}\frac{|r_1|}{|s_1|}=\frac{|z_1|}{|w_1|},\ \ \frac{\Re(r_1\bar{s_1})}{|r_1||s_1|}=\frac{\Re(z_1\bar{w_1})}{|z_1||w_1|}.\end{equation}
    The  condition $\X(o, p_4, p_3,\infty)\sim \X(o, q_4, q_3,\infty)$ implies that \begin{equation}\label{043w}\frac{|r_1|}{|\langle {\hat\p_3},{\hat\p_4}\rangle|}=\frac{|z_1|}{|\langle {\hat\q_3},{\hat\q_4}\rangle|},\ \ \frac{\Re(r_1\overline{\langle {\hat\p_3},{\hat\p_4}\rangle})}{|r_1||\langle {\hat\p_3},{\hat\p_4}\rangle|}=\frac{\Re(z_1\overline{\langle {\hat\q_3},{\hat\q_4}\rangle})}{|z_1||\langle {\hat\q_3},{\hat\q_4}\rangle|}.\end{equation}
    The  condition  $\X(\infty,p_4,p_3, o)\sim \X(\infty,q_4,q_3, o)$ implies that \begin{equation}\label{w430}\frac{|s_1|}{|\langle {\hat\p_3},{\hat\p_4}\rangle|}=\frac{|w_1|}{|\langle {\hat\q_3},{\hat\q_4}\rangle|},\ \ \frac{\Re( s_1\langle{\hat\p_3},{\hat\p_4}\rangle )}{|s_1||\langle {\hat\p_4},{\hat\p_3}\rangle|}=\frac{\Re(w_1\langle {\hat\q_3},{\hat\q_4}\rangle) }{|w_1||\langle {\hat\q_4},{\hat\q_3}\rangle|}.\end{equation}
  The  conditions $\ba_{\bh}(p_1, p_2, p_3)=\ba_{\bh}(q_1, q_2, q_3)$, $\ba_{\bh}(p_1, p_2, p_4)=\ba_{\bh}(q_1, q_2, q_4)$ and
 $\ba_{\bh}(p_2, p_3, p_4)=\ba_{\bh}(q_2, q_3, q_4)$ imply that
\begin{equation}\label{anguc}  \frac{\Re(r_1)}{|r_1|}=\frac{\Re(z_1)}{|z_1|},\ \ \frac{\Re(s_1)}{|s_1|}=\frac{\Re(w_1)}{|w_1|}, \  \frac{\Re(\langle {\hat\p_4},{\hat\p_3}\rangle)}{|\langle {\hat\p_4},{\hat\p_3}\rangle|}=\frac{\Re(\langle {\hat\q_4},{\hat\q_3}\rangle)}{|\langle {\hat\q_4},{\hat\q_3}\rangle|}.\end{equation}
  By Proposition \ref{prop-addqu} and (\ref{0w34})-(\ref{anguc}), we know  that  the three angles between  pairs of the vectors  $$\Im\big(\frac{r_1}{|r_1|}\big), \Im\big(\frac{s_1}{|s_1|}\big), \Im\big(\frac{ \langle {\hat\p_3},{\hat\p_4}\rangle}{ |\langle {\hat\p_3},{\hat\p_4}\rangle|}\big)$$ equal the  three  angles  between  the corresponding  pairs of the vectors $$\Im\big(\frac{z_1}{|z_1|}\big), \Im\big(\frac{w_1}{|w_1|}\big), \Im\big(\frac{ \langle {\hat\q_3},{\hat\q_4}\rangle}{| \langle {\hat\q_3},{\hat\q_4}\rangle|}\big),$$ respectively.

 Let $$\kappa=\frac{|r_1|}{|z_1|}=\frac{|s_1|}{|w_1|}=\frac{|\langle {\hat\p_3},{\hat\p_4}\rangle|}{|\langle {\hat\q_3},{\hat\q_4}\rangle|}.$$  Due to the action of ${\rm Sp}(1)$ by conjugation in $\bh$ coincides with the action of ${\rm SO}(3)$, therefore   there exists a  $\mu\in {\rm Sp}(1)$ such that  $$r_1= \kappa\bar{\mu}z_1\mu, \ \ s_1= \kappa\bar{\mu}w_1\mu,\ \ \langle {\hat\p_3},{\hat\p_4}\rangle= \kappa\bar{\mu}\langle {\hat\q_3},{\hat\q_4}\rangle\mu.$$
Noting that $$\langle {\hat\p_3},{\hat\p_4}\rangle=r_1+\bar{s_1}+\sum_{i=2}^n{\bar{s_i}r_i},\ \langle {\hat\q_3},{\hat\q_4}\rangle=z_1+\bar{w_1}+\sum_{i=2}^n{\bar{w_i}z_i},$$
we have that $$\sum_{i=2}^n{\bar{s_i}r_i}=\kappa\bar{\mu}\big(\sum_{i=2}^n{\bar{w_i}z_i}\big)\mu.$$   Since $p_3, p_4$ and $q_3, q_4$ are on $\partial {\bf H}_{\bh}^n$, we have that
$$\frac{\sum_{i=2}^{n}|r_i|^2}{\sum_{i=2}^{n}|z_i|^2}= \frac{\sum_{i=2}^{n}|s_i|^2}{\sum_{i=2}^{n}|w_i|^2}=\kappa$$
and so that we can find an  $A\in {\rm Sp}(n-1)$ such that
$$A (r_2,\cdots, r_n)^T=\sqrt{\kappa}(z_2,\cdots, z_n)^T\mu,\, A (s_2,\cdots, s_n)^T=\sqrt{\kappa}(w_2,\cdots, w_n)^T\mu.$$
Hence  $h={\rm diag}(\frac{\mu}{\sqrt{\kappa}},A,\sqrt{\kappa}\mu)$ is the desired isometry. \hfill$\square$

\vspace{3mm}
{\bf Acknowledgements}\quad   This work was supported by National Natural Science Foundation of China  and Educational Commission of Guangdong Province.
The authors would like to thank Prof. John R. Parker and  the  referee  for their useful suggestions, which totally reshaped and enhanced  this paper.

\end{document}